\newtheorem{thm}{Theorem}[section]
\newtheorem{lem}[thm]{Lemma}
\newtheorem{rem}{Remark}[section]
\newtheorem{assump}{Assumption}
\numberwithin{equation}{section}
\numberwithin{thm}{section}
\numberwithin{rem}{section}
\title{Convergence Rates and Explicit Error Bounds of 
Hill's Method for Spectra of Self-Adjoint Differential Operators}
\author{
Ken'ichiro Tanaka$^{\ast}$ $^{\dagger}$, 
Sunao Murashige$^{\ast}$ $^{\ddagger}$\\
$^{\ast}$School of Systems Information Science, Future University Hakodate, \\
116-2 Kamedanakano-cho, Hakodate, Hokkaido, 041-8655, Japan\\
$^{\dagger}$ ketanaka@fun.ac.jp,\ $^{\ddagger}$ murasige@fun.ac.jp
}
\date{May 15, 2012}
\begin{document}

\maketitle

\begin{abstract}
We present the convergence rates and the explicit error bounds 
of Hill's method, which is a numerical method for computing the spectra of 
ordinary differential operators with periodic coefficients. 
This method approximates the operator by a finite dimensional matrix. 
On the assumption that the operator is self-adjoint,
it is shown that, under some conditions, we can obtain the convergence rates of eigenvalues with respect to
the dimension and the explicit error bounds.
Numerical examples demonstrate that we can verify these conditions 
using Gershgorin's theorem for some real problems.
Main theorems are proved using the Dunford integrals which project an eigenvector to the corresponding eigenspace.\\

\noindent
{\bf Keywords} Hill's method, Convergence rate, Error bound, Differential Operator, Eigenvalue Problem\\

\noindent
{\bf Mathematics Subject Classification (2000)} 65L15, 65L20, 65L70
\end{abstract}

\section{Introduction}

This paper considers Hill's method \cite{bib:Hill_HillsMethod}, which is 
a numerical method for computing spectra of ordinary differential operators $S_{p}$ with periodic coefficients:
\begin{align}
S_{p} = 
\frac{\mathrm{d}^p}{\mathrm{d}x^p} + \sum_{j=0}^{p-1} \tilde{f}_{j}(x) \frac{\mathrm{d}^{j}}{\mathrm{d}x^{j}},
\label{eq:main_prob}
\end{align}
where $x \in \mathbf{R}$, and
the functions $\tilde{f}_{j}\ (j=0,1,\ldots, p-1)$ are $C^{\infty}$ and satisfy 
$\tilde{f}_{j}(x+L) = \tilde{f}_{j}(x)$ for some real positive constant $L$.
%The spectrum $\sigma(S_p)$ contains the eigenvalues of $S_{p}$. 
The eigenvalue problem for $S_{p}$ is described as $S_p \phi = \lambda \phi$, 
where $\lambda$ is an eigenvalue and $\phi$ is an eigenvector. 
This eigenvalue problem often appears in physical problems such as linear stability analysis of 
periodic solutions of nonlinear wave equations \cite{bib:DeconinckKutz2006}.
The set of eigenvalues $\lambda$ is included in the spectrum $\sigma(S_p)$. 
Accordingly, computation of $\sigma(S_p)$ is important in both theoretical and practical points of view.
%Then accurate numerical methods are desired for such problems.
%Hill's method is known as one of promising numerical methods for $\sigma(S_p)$. 
It was reported in \cite{bib:DeconinckKutz2006}
that Hill's method can produce very good computed results of $\sigma(S_p)$ for some problems. 
Note that implementation of Hill's method is straightforward. 

The ideas of 
Hill's method are to employ
the Floquet-Bloch decomposition of the spectrum $\sigma(S_{p})$ and 
to approximate the eigenvector $\phi$ by a finite Fourier series.
This Fourier series approximation generates an eigenvalue problem of finite dimension $D$ corresponding to the operator $S_{p}$.
The aims of this paper are to 
show the convergence rates of the approximate eigenvalues with respect to the dimension $D$, 
and to explicitly obtain the error bounds of them.

On the convergence property of Hill's method, 
%the following three problems are not completely solved: 
%(i) do the exact eigenvalues exist near the computed ones?
%existence of exact values near the computed ones, 
%(i) the proximity of computed values to exact ones, 
%(ii) can all exact eigenvalues be approximated by the computed ones?
%existence of approximations for all exact values, 
%and 
%(iii) can we estimate the convergence rates of the eigenvalues?
Curtis and Deconinck \cite{bib:CurtisDeconinck2010}
proved that the exact eigenvalues exist near the computed ones for general cases of $S_{p}$, 
and showed that all exact eigenvalues can be approximated by the computed ones for the case of self-adjoint $S_{p}$.
Also, they obtained the convergence rate
for the case of self-adjoint $S_{p}$ and 
the constant coefficients $\tilde{f}_{1}, \ldots, \tilde{f}_{p-1}$. 
Johnson and Zumbrun \cite{bib:JohnsonZumbrun2012} 
investigated Hill's method
using the Evans function, of which the roots correspond to eigenvalues, 
for general cases of $S_{p}$. 
%a generalization of the characteristic polynomial of a matrix, 
They showed that 
the approximate eigenvalues 
converge to the exact ones, but did not get the convergence rate of them.
% They also consider the case that the coefficient of 
% the highest order differential of the operator is not a constant 
% and the operator is ``vector valued''.
% Moreover, they estimate the convergence rate of the Evans function, 
% although (iii) is not investigated. 
Vainikko \cite{bib:Krasnoselskii_etal_1972}\cite{bib:Vainikko1967}
examined an approximation method for the eigenvalue problem $S_p \phi = \lambda \phi$
in an abstract framework, and
obtained the convergence rate, 
which is based on the resolvent norm convergence of approximate operators, 
for general cases of $S_{p}$. 
We can apply Vainikko's results to Hill's method. 
But it is difficult to directly evaluate the value of the convergence rate.
%Note that Vainikko's method can be applied 
%to not only Hill's method but also a more general class of approximate methods.
It should be noted that, 
although all of these convergence rates yield some error bounds of approximate eigenvalues
with unknown coefficients, any explicit error bounds have not been shown. 

In this paper, we 
give a priori estimates of the convergence rate and a posteriori explicit error bounds of Hill's method
for self-adjoint $S_{p}$ with the two cases of coefficient functions $\tilde{f}_{j}$ in  \eqref{eq:main_prob}, 
namely $\tilde{f}_{j}\in C^{\infty}$ and $\tilde{f}_{j}$ being analytic 
on some strip region containing the real line. 
These classes of $\tilde{f}_{j}$ are more general than \cite{bib:CurtisDeconinck2010}. 
The key ideas of these estimations are to project eigenvectors 
using the Dunford integrals \eqref{eq:DunfordIntP} and \eqref{eq:DunfordIntP_pl}, 
and to specify the disks around the exact eigenvalues using 
Gershgorin's theorem (Theorem~\ref{thm:Gershgorin}).
%It should be emphasized that 
%our method can be applied even if more than one approximate eigenvalues 
%exist near the exact one. 

This paper is organized as follows. 
Section~\ref{sec:HillsMethod} describes Hill's method. 
In Section~\ref{sec:DiffOpHill}, 
we give the convergence rates and the explicit error bounds of Hill's method
for self-adjoint differential operators. 
Section~\ref{sec:NumComp} presents
numerical examples which supports our results. 
Section~\ref{sec:GenConv} summarizes the proofs of theorems in
Section~\ref{sec:DiffOpHill}.
Section~\ref{sec:ConcRem} concludes this paper.

\section{Hill's method}
\label{sec:HillsMethod}

This section describes Hill's method \cite{bib:Hill_HillsMethod} which is a numerical method for 
computing the spectum  $\sigma(S_p)$ of the ordinary differential operator $S_{p}$ with periodic coefficients 
$\tilde{f}_{j}$ 
defined by \eqref{eq:main_prob}.
This operator $S_p$ can be regarded as an operator $S_p: H^{p}(\mathbf{R}) \to L_2(\mathbf{R})$, 
where $L_2(\mathbf{R})$ is the Lebesgue space of square integrable functions on $\mathbf{R}$ and 
$H^{p}(\mathbf{R}) \subset L_2(\mathbf{R})$ is the Sobolev space of functions whose derivatives 
up to $p$-th order are square integrable. 
Hill's method approximates the elements of $\sigma(S_p)$
by the following two steps.\\

\noindent
{\bf Step 1: Floquet-Bloch decomposition}. 
In order to apply the Floquet theory, 
we introduce a new operator 
$S_p^{\mu}: H^{p}([0,L])_{\mathrm{per}} \to L_{2}([0, L])_{\mathrm{per}}$ defined by 
\begin{align}
S_p^{\mu} = \mathrm{e}^{-\mathrm{i} \mu x} S_p \mathrm{e}^{\mathrm{i} \mu x}, 
\label{eq:Def_S_p_mu}
\end{align}
where $\mu \in [0,\, 2\pi / L)$, and 
$L_{2}([0, L])_{\mathrm{per}}$ and $H^{p}([0,L])_{\mathrm{per}}$ are 
the Lebesgue space and the Sobolev space of periodic functions on $[0, L]$, 
respectively. 
Note that $S_p$ is defined on $\mathbf{R}$, whereas 
$S_p^{\mu}$ is on $[0, L]$. 
If  $S_p$ is self-adjoint, then $S_p^{\mu}$ is also self-adjoint. 
Moreover,  $S_p^{\mu}$ is explicitly written as 
\begin{align}
S_p^{\mu} = 
\frac{\mathrm{d}^{p}}{\mathrm{d}x^{p}} + 
\sum_{j=0}^{p-1} f_{j}(x) \frac{\mathrm{d}^{j}}{\mathrm{d}x^{j}}
\label{eq:Expr_S_p_mu}
\end{align}
for some periodic functions $f_{0},\ldots, f_{p-1}$ with period $L$.

It is known that the spectrum $\sigma(S_p^{\mu})$
consists of only
the eigenvalues of $S_p^{\mu}$, 
and the Floquet theory yields
\begin{align}
\sigma(S_p) = \bigcup_{\mu \in [0,\, 2\pi / L)} \sigma(S_p^{\mu}).
\label{eq:FloquetBloch}
\end{align}
This decomposition \eqref{eq:FloquetBloch} is called the Floquet-Bloch decomposition. 
Accordingly, it suffices to consider the eigenvalue problem 
$S_p^{\mu} \phi = \lambda \phi$ for $\mu \in [0,\, 2\pi / L)$, 
where $\lambda\in\mathbf{C}$ is an eigenvalue and $\phi\in L_2([0,L])_{\mathrm{per}}$ is an eigenvector. 
In addition, it should be noted that 
$\sigma(S_{p}^{\mu})$ is a discrete set of the eigenvalues of $S_{p}^{\mu}$ without accumulation points. 
This discreteness follows from the compactness of the resolvent of $S_{p}^{\mu}$
provided $\rho(S_{p}^{\mu}) \neq \emptyset$, where $\rho(S_{p}^{\mu}) = \mathbf{C} \setminus \sigma(S_{p}^{\mu})$.
See e.g.~Lemma~2 in \cite[Chapter XIX Section~2]{bib:DunfordSchwartzIII}, 
Lemma~3 in \cite[Chapter XIX Section~3]{bib:DunfordSchwartzIII} and
Lemma~16 in \cite[Chapter XIII Section~2]{bib:DunfordSchwartzII}.\\

\noindent
{\bf Step 2: Fourier series approximation}.
Since $\phi \in L_2([0,L])_{\mathrm{per}}$ has the Fourier series expansion:
\begin{align}
\phi(x) = \frac{1}{\sqrt{L}} \sum_{n=-\infty}^{\infty}\hat{\phi}_n\,  \exp \left( - \mathrm{i} \dfrac{2\pi n x}{L}\right),
\end{align}
$\phi $ can be approximated by the truncation of this series:
\begin{align}
(\hat{P}_N \phi)(x) = \frac{1}{\sqrt{L}} \sum_{n=-N}^{N}\hat{\phi}_n\, \exp \left( - \mathrm{i} \dfrac{2\pi n x}{L}\right).
\label{eq:Def_P_N}
\end{align}
This truncation reduces the eigenvalue problem 
$S_p^{\mu} \phi = \lambda \phi$ to a finite dimensional problem. 
More precisely, 
the problem
\begin{align}
S_{p, N}^{\mu} \phi_N = \lambda_N \phi_N 
\quad \text{with}\quad 
S_{p,N}^{\mu} = \hat{P}_N S_{p}^{\mu} \hat{P}_N
\label{eq:FiniteDimProblem}
\end{align}
gives approximate eigenvalues $\lambda_{N}$'s for the original problem $S_p^{\mu} \phi = \lambda \phi$. 

Since the problem \eqref{eq:FiniteDimProblem} is equivalent to a matrix eigenvalue problem, 
we can obtain the approximate values of the eigenvalues of $S_p^{\mu}$ using some standard numerical method.  
In the following sections, 
let $\sigma(S_{p,N}^{\mu})$ denote the set of eigenvalues of $S_{p,N}^{\mu}$.

\section{Convergence Rates and Error Bounds of Hill's Method}
\label{sec:DiffOpHill}

In this section, 
we present theorems about a priori estimates of the convergence rate and 
a posteriori explicit error bounds of Hill's method for the eigenvalue problem $S_p^{\mu} \phi = \lambda \phi$
with self-adjoint  $S_p^{\mu}$ 
on some assumptions. 
Their proofs are given in Section~\ref{sec:GenConv}. 
In what follows, we use the notations defined in Section~\ref{sec:HillsMethod}. 

\subsection{Assumptions}

First, we assume the self-adjointness of the operator $S_{p}^{\mu}$. 

\renewcommand{\theassump}{1}
\begin{assump}\label{assump:self-adjoint}
The operator $S_{p}^{\mu}$ is self-adjoint, i.e. $(S_{p}^{\mu})^{\ast} = S_{p}^{\mu}$, 
where $^{\ast}$ represents the adjoint of operators.
\end{assump}
\noindent
On Assumption~\ref{assump:self-adjoint}, 
$S_{p, N}^{\mu}$ in \eqref{eq:FiniteDimProblem} is also self-adjoint.

Next, 
we specify the smoothness of the coefficient functions $f_{j}$'s of 
$S_{p}^{\mu}$ in \eqref{eq:Expr_S_p_mu}.

\renewcommand{\theassump}{2a}
\begin{assump}
\label{assump:C_infty}
For the operator $S_{p}^{\mu}$, the coefficients $f_{0}, f_{1},\ldots, f_{p-1}$ are $C^{\infty}$.
\end{assump}
\noindent
In addition, 
as a special case of Assumption~\ref{assump:C_infty}, 
the following assumption is prepared.

\renewcommand{\theassump}{2b}
\begin{assump}
\label{assump:analytic}
For the operator $S_{p}^{\mu}$, the coefficients $f_{0}, f_{1},\ldots, f_{p-1}$ are analytic on 
a complex domain 
\begin{align}
\mathcal{D}_d = \{ z \in \mathbf{C}\ |\ |\mathrm{Im}\, z| < d\}
\label{eq:DefD_d}
\end{align}
for some $d>0$.
\end{assump}
\noindent
Assumption~\ref{assump:analytic} is often satisfied in real problems.

%Let $B_{\zeta}(r)$ represent a closed disk on $\mathbf{C}$ defined by
%$B_{\zeta}(r) = \{ z \in \mathbf{C} \mid | z - \zeta | \leq r \}$ where $\zeta \in \mathbf{C}$ and $r > 0$.
If $S_{p, N}^{\mu}$ approximates $S_{p}^{\mu}$ very well, 
we may expect that 
approximate eigenvalues $\lambda_{N}$'s for large enough $N$ 
should be included in the neighborhood $B_{\lambda}(r_{\lambda})$
of the corresponding exact eigenvalue $\lambda$, 
where 
\begin{align}
B_{\zeta}(r) = \{ z \in \mathbf{C} \mid | z - \zeta | \leq r \}.
\label{eq:DefDisk}
\end{align}
Then we prepare the following two assumptions. 

\renewcommand{\theassump}{3a}
\begin{assump}\label{assump:approx_cond_single}
For $\lambda \in \sigma(S_{p}^{\mu})$, there exists a positive real number $r_{\lambda}$
such that the following holds true: 
for some sequence $\{ \lambda_N \mid \lambda_N \in \sigma(S_{p, N}^{\mu}) \}$ 
and some positive integer $N_{0}$, 
if $N > N_{0}$ then
\begin{align}
 |\lambda - \lambda_N| \leq \frac{r_{\lambda}}{2}
\label{eq:3a_in}
\end{align}
and
\begin{align}
2r_{\lambda} \leq
\min 
\left\{ 
\mathrm{dist}(\lambda, \sigma(S_{p}^{\mu})\setminus \{\lambda \}),\, 
\mathrm{dist}(\lambda, \sigma(S_{p, N}^{\mu})\setminus \{\lambda_N \}) 
\right\}.
\label{eq:3a_out}
\end{align}
\end{assump}

Figure~\ref{fig:eigenvalues01} illustrates these conditions \eqref{eq:3a_in} and~\eqref{eq:3a_out}. 
When a single sequence $\{ \lambda_{N} \}$ for some $\lambda \in \sigma(S_{p}^{\mu})$ is detected, 
Assumption~\ref{assump:approx_cond_single} is effective for estimation of the convergence rate 
and the error bound of $\{ \lambda_{N} \}$. 
In general, 
there exist more than one sequence of approximate eigenvalues
for some $\lambda \in \sigma(S_{p}^{\mu})$. 
Then we set another assumption as follows:

\renewcommand{\theassump}{3b}
\begin{assump}\label{assump:approx_cond_plural}
Let $k$ be an integer with $k\geq 2$.
For $\lambda \in \sigma(S_{p}^{\mu})$, there exists a positive real number $r_{\lambda}$
such that the following holds true: 
for some sequences $\{ \lambda_{N, i} \mid \lambda_{N, i} \in \sigma(S_{p, N}^{\mu}) \}\ (i=1,2,\ldots k)$  
and some positive integer $N_{0}$, 
if $N > N_{0}$ then
\begin{align}
 |\lambda - \lambda_{N, i}| \leq \frac{r_{\lambda}}{2}
 \quad (i = 1,2,\ldots, k)
\label{eq:3b_in}
\end{align}
and 
\begin{align}
2r_{\lambda} \leq
\min 
\left\{
\mathrm{dist}(\lambda, \sigma(S_{p}^{\mu})\setminus \{\lambda \}),\, 
\mathrm{dist} \left( \lambda,\ \sigma(S_{p, N}^{\mu})\setminus \left( \bigcup_{i=1}^{k} \{\lambda_{N, i} \}\right) \right) 
\right\}.
\label{eq:3b_out}
\end{align}
\end{assump}

\begin{rem}
\label{rem:discrete_spectra}
Assumptions~\ref{assump:approx_cond_single} and~\ref{assump:approx_cond_plural} mean that, 
for sufficiently large $N$, 
the number of eigenvalues of $S_{p, N}^{\mu}$ approximating $\lambda \in \sigma(S_{p}^{\mu})$ is at most finite, 
and the other elements of $\sigma(S_{p}^{\mu})$ and $\sigma(S_{p, N}^{\mu})$ are relatively far from $\lambda$.
The finiteness of the approximate eigenvalues is based on 
the fact that the eigenspace of $S_{p}^{\mu}$ for any $\lambda$ is finite dimensional. 
\end{rem}

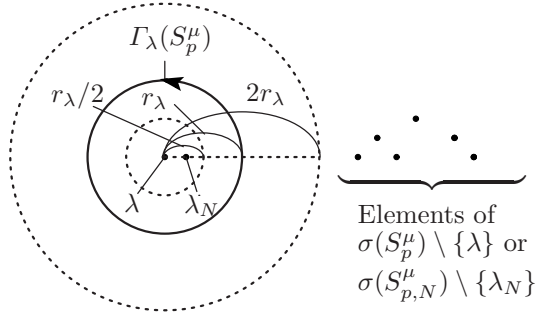
\begin{figure}[t]
\begin{center}
%WinTpicVersion4.10
\unitlength 0.1in
\begin{picture}( 29.1500, 16.0000)( -3.1500,-18.0000)
% CIRCLE 1 2 3 0 Black White
% 4 1000 1000 1200 1000 1200 1000 1200 1000
% 
{\color[named]{Black}{%
\special{pn 13}%
\special{ar 1000 1000 200 200  0.0000000 0.0600000}%
\special{ar 1000 1000 200 200  0.2400000 0.3000000}%
\special{ar 1000 1000 200 200  0.4800000 0.5400000}%
\special{ar 1000 1000 200 200  0.7200000 0.7800000}%
\special{ar 1000 1000 200 200  0.9600000 1.0200000}%
\special{ar 1000 1000 200 200  1.2000000 1.2600000}%
\special{ar 1000 1000 200 200  1.4400000 1.5000000}%
\special{ar 1000 1000 200 200  1.6800000 1.7400000}%
\special{ar 1000 1000 200 200  1.9200000 1.9800000}%
\special{ar 1000 1000 200 200  2.1600000 2.2200000}%
\special{ar 1000 1000 200 200  2.4000000 2.4600000}%
\special{ar 1000 1000 200 200  2.6400000 2.7000000}%
\special{ar 1000 1000 200 200  2.8800000 2.9400000}%
\special{ar 1000 1000 200 200  3.1200000 3.1800000}%
\special{ar 1000 1000 200 200  3.3600000 3.4200000}%
\special{ar 1000 1000 200 200  3.6000000 3.6600000}%
\special{ar 1000 1000 200 200  3.8400000 3.9000000}%
\special{ar 1000 1000 200 200  4.0800000 4.1400000}%
\special{ar 1000 1000 200 200  4.3200000 4.3800000}%
\special{ar 1000 1000 200 200  4.5600000 4.6200000}%
\special{ar 1000 1000 200 200  4.8000000 4.8600000}%
\special{ar 1000 1000 200 200  5.0400000 5.1000000}%
\special{ar 1000 1000 200 200  5.2800000 5.3400000}%
\special{ar 1000 1000 200 200  5.5200000 5.5800000}%
\special{ar 1000 1000 200 200  5.7600000 5.8200000}%
\special{ar 1000 1000 200 200  6.0000000 6.0600000}%
\special{ar 1000 1000 200 200  6.2400000 6.2832853}%
}}%
% CIRCLE 1 0 3 0 Black White
% 4 1000 1000 1400 1000 1400 1000 1400 1000
% 
{\color[named]{Black}{%
\special{pn 13}%
\special{ar 1000 1000 400 400  0.0000000 6.2831853}%
}}%
% STR 2 0 3 0 Black White
% 4 870 1185 870 1285 3 0 0 0
% $\lambda$
\put(8.7000,-12.8500){\makebox(0,0)[rb]{$\lambda$}}%
% DOT 0 0 3 0 Black White
% 6 1000 1000 1110 1000 2000 1000 2200 1000 2600 1000 2600 1000
% 
{\color[named]{Black}{%
\special{pn 4}%
\special{sh 1}%
\special{ar 1000 1000 14 14 0  6.28318530717959E+0000}%
\special{sh 1}%
\special{ar 1110 1000 14 14 0  6.28318530717959E+0000}%
\special{sh 1}%
\special{ar 2000 1000 14 14 0  6.28318530717959E+0000}%
\special{sh 1}%
\special{ar 2200 1000 14 14 0  6.28318530717959E+0000}%
\special{sh 1}%
\special{ar 2600 1000 14 14 0  6.28318530717959E+0000}%
\special{sh 1}%
\special{ar 2600 1000 14 14 0  6.28318530717959E+0000}%
}}%
% CIRCLE 1 2 3 0 Black White
% 4 1000 1000 1800 1000 1800 1000 1800 1000
% 
{\color[named]{Black}{%
\special{pn 13}%
\special{ar 1000 1000 800 800  0.0000000 0.0150000}%
\special{ar 1000 1000 800 800  0.0600000 0.0750000}%
\special{ar 1000 1000 800 800  0.1200000 0.1350000}%
\special{ar 1000 1000 800 800  0.1800000 0.1950000}%
\special{ar 1000 1000 800 800  0.2400000 0.2550000}%
\special{ar 1000 1000 800 800  0.3000000 0.3150000}%
\special{ar 1000 1000 800 800  0.3600000 0.3750000}%
\special{ar 1000 1000 800 800  0.4200000 0.4350000}%
\special{ar 1000 1000 800 800  0.4800000 0.4950000}%
\special{ar 1000 1000 800 800  0.5400000 0.5550000}%
\special{ar 1000 1000 800 800  0.6000000 0.6150000}%
\special{ar 1000 1000 800 800  0.6600000 0.6750000}%
\special{ar 1000 1000 800 800  0.7200000 0.7350000}%
\special{ar 1000 1000 800 800  0.7800000 0.7950000}%
\special{ar 1000 1000 800 800  0.8400000 0.8550000}%
\special{ar 1000 1000 800 800  0.9000000 0.9150000}%
\special{ar 1000 1000 800 800  0.9600000 0.9750000}%
\special{ar 1000 1000 800 800  1.0200000 1.0350000}%
\special{ar 1000 1000 800 800  1.0800000 1.0950000}%
\special{ar 1000 1000 800 800  1.1400000 1.1550000}%
\special{ar 1000 1000 800 800  1.2000000 1.2150000}%
\special{ar 1000 1000 800 800  1.2600000 1.2750000}%
\special{ar 1000 1000 800 800  1.3200000 1.3350000}%
\special{ar 1000 1000 800 800  1.3800000 1.3950000}%
\special{ar 1000 1000 800 800  1.4400000 1.4550000}%
\special{ar 1000 1000 800 800  1.5000000 1.5150000}%
\special{ar 1000 1000 800 800  1.5600000 1.5750000}%
\special{ar 1000 1000 800 800  1.6200000 1.6350000}%
\special{ar 1000 1000 800 800  1.6800000 1.6950000}%
\special{ar 1000 1000 800 800  1.7400000 1.7550000}%
\special{ar 1000 1000 800 800  1.8000000 1.8150000}%
\special{ar 1000 1000 800 800  1.8600000 1.8750000}%
\special{ar 1000 1000 800 800  1.9200000 1.9350000}%
\special{ar 1000 1000 800 800  1.9800000 1.9950000}%
\special{ar 1000 1000 800 800  2.0400000 2.0550000}%
\special{ar 1000 1000 800 800  2.1000000 2.1150000}%
\special{ar 1000 1000 800 800  2.1600000 2.1750000}%
\special{ar 1000 1000 800 800  2.2200000 2.2350000}%
\special{ar 1000 1000 800 800  2.2800000 2.2950000}%
\special{ar 1000 1000 800 800  2.3400000 2.3550000}%
\special{ar 1000 1000 800 800  2.4000000 2.4150000}%
\special{ar 1000 1000 800 800  2.4600000 2.4750000}%
\special{ar 1000 1000 800 800  2.5200000 2.5350000}%
\special{ar 1000 1000 800 800  2.5800000 2.5950000}%
\special{ar 1000 1000 800 800  2.6400000 2.6550000}%
\special{ar 1000 1000 800 800  2.7000000 2.7150000}%
\special{ar 1000 1000 800 800  2.7600000 2.7750000}%
\special{ar 1000 1000 800 800  2.8200000 2.8350000}%
\special{ar 1000 1000 800 800  2.8800000 2.8950000}%
\special{ar 1000 1000 800 800  2.9400000 2.9550000}%
\special{ar 1000 1000 800 800  3.0000000 3.0150000}%
\special{ar 1000 1000 800 800  3.0600000 3.0750000}%
\special{ar 1000 1000 800 800  3.1200000 3.1350000}%
\special{ar 1000 1000 800 800  3.1800000 3.1950000}%
\special{ar 1000 1000 800 800  3.2400000 3.2550000}%
\special{ar 1000 1000 800 800  3.3000000 3.3150000}%
\special{ar 1000 1000 800 800  3.3600000 3.3750000}%
\special{ar 1000 1000 800 800  3.4200000 3.4350000}%
\special{ar 1000 1000 800 800  3.4800000 3.4950000}%
\special{ar 1000 1000 800 800  3.5400000 3.5550000}%
\special{ar 1000 1000 800 800  3.6000000 3.6150000}%
\special{ar 1000 1000 800 800  3.6600000 3.6750000}%
\special{ar 1000 1000 800 800  3.7200000 3.7350000}%
\special{ar 1000 1000 800 800  3.7800000 3.7950000}%
\special{ar 1000 1000 800 800  3.8400000 3.8550000}%
\special{ar 1000 1000 800 800  3.9000000 3.9150000}%
\special{ar 1000 1000 800 800  3.9600000 3.9750000}%
\special{ar 1000 1000 800 800  4.0200000 4.0350000}%
\special{ar 1000 1000 800 800  4.0800000 4.0950000}%
\special{ar 1000 1000 800 800  4.1400000 4.1550000}%
\special{ar 1000 1000 800 800  4.2000000 4.2150000}%
\special{ar 1000 1000 800 800  4.2600000 4.2750000}%
\special{ar 1000 1000 800 800  4.3200000 4.3350000}%
\special{ar 1000 1000 800 800  4.3800000 4.3950000}%
\special{ar 1000 1000 800 800  4.4400000 4.4550000}%
\special{ar 1000 1000 800 800  4.5000000 4.5150000}%
\special{ar 1000 1000 800 800  4.5600000 4.5750000}%
\special{ar 1000 1000 800 800  4.6200000 4.6350000}%
\special{ar 1000 1000 800 800  4.6800000 4.6950000}%
\special{ar 1000 1000 800 800  4.7400000 4.7550000}%
\special{ar 1000 1000 800 800  4.8000000 4.8150000}%
\special{ar 1000 1000 800 800  4.8600000 4.8750000}%
\special{ar 1000 1000 800 800  4.9200000 4.9350000}%
\special{ar 1000 1000 800 800  4.9800000 4.9950000}%
\special{ar 1000 1000 800 800  5.0400000 5.0550000}%
\special{ar 1000 1000 800 800  5.1000000 5.1150000}%
\special{ar 1000 1000 800 800  5.1600000 5.1750000}%
\special{ar 1000 1000 800 800  5.2200000 5.2350000}%
\special{ar 1000 1000 800 800  5.2800000 5.2950000}%
\special{ar 1000 1000 800 800  5.3400000 5.3550000}%
\special{ar 1000 1000 800 800  5.4000000 5.4150000}%
\special{ar 1000 1000 800 800  5.4600000 5.4750000}%
\special{ar 1000 1000 800 800  5.5200000 5.5350000}%
\special{ar 1000 1000 800 800  5.5800000 5.5950000}%
\special{ar 1000 1000 800 800  5.6400000 5.6550000}%
\special{ar 1000 1000 800 800  5.7000000 5.7150000}%
\special{ar 1000 1000 800 800  5.7600000 5.7750000}%
\special{ar 1000 1000 800 800  5.8200000 5.8350000}%
\special{ar 1000 1000 800 800  5.8800000 5.8950000}%
\special{ar 1000 1000 800 800  5.9400000 5.9550000}%
\special{ar 1000 1000 800 800  6.0000000 6.0150000}%
\special{ar 1000 1000 800 800  6.0600000 6.0750000}%
\special{ar 1000 1000 800 800  6.1200000 6.1350000}%
\special{ar 1000 1000 800 800  6.1800000 6.1950000}%
\special{ar 1000 1000 800 800  6.2400000 6.2550000}%
}}%
% STR 2 0 3 0 Black White
% 4 1285 1210 1285 1310 3 0 0 0
% $\lambda_N$
\put(12.8500,-13.1000){\makebox(0,0)[rb]{$\lambda_N$}}%
% ELLIPSE 2 0 3 0 Black White
% 4 1095 1005 1200 940 1176 1005 1014 1005
% 
{\color[named]{Black}{%
\special{pn 8}%
\special{ar 1096 1006 106 66  3.1415927 6.2831853}%
}}%
% ELLIPSE 2 0 3 0 Black White
% 4 1195 1000 1400 875 1353 1000 1037 1000
% 
{\color[named]{Black}{%
\special{pn 8}%
\special{ar 1196 1000 206 126  3.1415927 6.2831853}%
}}%
% ELLIPSE 2 0 3 0 Black White
% 4 1400 1000 1803 765 1711 1000 1090 1000
% 
{\color[named]{Black}{%
\special{pn 8}%
\special{ar 1400 1000 404 236  3.1415927 6.2831853}%
}}%
% STR 2 0 3 0 Black White
% 4 1625 680 1625 730 3 0 0 0
% $2R_{\lambda}$
\put(16.2500,-7.3000){\makebox(0,0)[rb]{$2r_{\lambda}$}}%
% STR 2 0 3 0 Black White
% 4 880 700 880 750 2 0 0 0
% $R_{\lambda}$
\put(8.8000,-7.5000){\makebox(0,0)[lb]{$r_{\lambda}$}}%
% STR 2 0 3 0 Black White
% 4 685 700 685 750 3 0 0 0
% $R_{\lambda}/2$
\put(6.8500,-7.5000){\makebox(0,0)[rb]{$r_{\lambda}/2$}}%
% LINE 2 0 3 0 Black White
% 2 1095 945 630 735
% 
{\color[named]{Black}{%
\special{pn 8}%
\special{pa 1096 946}%
\special{pa 630 736}%
\special{fp}%
}}%
% STR 2 0 3 0 Black White
% 4 1900 1130 1900 1180 2 0 0 0
% $\underbrace{\qquad \qquad \qquad}$
\put(19.0000,-11.8000){\makebox(0,0)[lb]{$\underbrace{\qquad \qquad \qquad \quad}$}}%
% STR 2 0 3 0 Black White
% 4 1245 420 1245 470 3 0 0 0
% $\varGamma_{\lambda}(A)$
\put(12.4500,-4.7000){\makebox(0,0)[rb]{$\varGamma_{\lambda}(S_{p}^{\mu})$}}%
% LINE 2 0 3 0 Black White
% 2 1000 600 1000 500
% 
{\color[named]{Black}{%
\special{pn 8}%
\special{pa 1000 600}%
\special{pa 1000 500}%
\special{fp}%
}}%
% STR 2 0 3 0 Black White
% 4 1965 1295 1965 1345 2 0 0 0
% $\sigma(A)\setminus \{\lambda \}$
\put(20.0000,-13.4500){\makebox(0,0)[lb]{Elements of }}%
\put(20.0000,-15.4500){\makebox(0,0)[lb]{$\sigma(S_{p}^{\mu})\setminus \{\lambda \}$\  or }}%
% STR 2 0 3 0 Black White
% 4 1860 1440 1860 1490 2 0 0 0
% $\sigma(A_N)\setminus \{\lambda_N \}$
\put(20.0000,-17.5500){\makebox(0,0)[lb]{$\sigma(S_{p,N}^{\mu})\setminus \{\lambda_N \}$}}%
% VECTOR 0 0 3 0 Black White
% 4 1030 605 1005 600 1005 600 1005 600
% 
{\color[named]{Black}{%
\special{pn 20}%
\special{pa 1030 606}%
\special{pa 1006 600}%
\special{fp}%
\special{sh 1}%
\special{pa 1006 600}%
\special{pa 1066 634}%
\special{pa 1058 610}%
\special{pa 1074 594}%
\special{pa 1006 600}%
\special{fp}%
\special{pa 1006 600}%
\special{pa 1006 600}%
\special{fp}%
}}%
% DOT 0 0 3 0 Black White
% 4 2100 900 2300 800 2500 900 2500 900
% 
{\color[named]{Black}{%
\special{pn 4}%
\special{sh 1}%
\special{ar 2100 900 14 14 0  6.28318530717959E+0000}%
\special{sh 1}%
\special{ar 2300 800 14 14 0  6.28318530717959E+0000}%
\special{sh 1}%
\special{ar 2500 900 14 14 0  6.28318530717959E+0000}%
\special{sh 1}%
\special{ar 2500 900 14 14 0  6.28318530717959E+0000}%
}}%
% LINE 2 0 3 0 Black White
% 2 1000 1000 860 1190
% 
{\color[named]{Black}{%
\special{pn 8}%
\special{pa 1000 1000}%
\special{pa 860 1190}%
\special{fp}%
}}%
% LINE 2 0 3 0 Black White
% 4 1110 1000 1165 1180 1175 1200 1175 1200
% 
{\color[named]{Black}{%
\special{pn 8}%
\special{pa 1110 1000}%
\special{pa 1166 1180}%
\special{fp}%
\special{pa 1176 1200}%
\special{pa 1176 1200}%
\special{fp}%
}}%
% LINE 1 2 3 0 Black White
% 2 1000 1000 1800 1000
% 
{\color[named]{Black}{%
\special{pn 13}%
\special{pa 1000 1000}%
\special{pa 1800 1000}%
\special{dt 0.045}%
}}%
% LINE 2 0 3 0 Black White
% 2 1205 880 1050 725
% 
{\color[named]{Black}{%
\special{pn 8}%
\special{pa 1206 880}%
\special{pa 1050 726}%
\special{fp}%
}}%
\end{picture}%
\end{center}
\caption{
Illustration of the conditions \eqref{eq:3a_in} and \eqref{eq:3a_out} in
Assumption~\ref{assump:approx_cond_single}. 
This figure depicts locations of an exact eigenvalue 
$\lambda \in \sigma(S_{p}^{\mu})$, the corresponding approximate eigenvalue
$\lambda_{N} \in \sigma(S_{p,N}^{\mu})$ and some other eigenvalues on the complex plane $\mathbf{C}$.  
The eigenvalue $\lambda_{N}$ is located inside the circle with center $\lambda$ and radius $r_{\lambda}/2$ 
(shown by the smaller dotted circle), 
and the other eigenvalues than $\lambda$ and $\lambda_{N}$ 
are outside the circle with center $\lambda$ and radius $2r_{\lambda}$
(shown by the larger dotted circle).
The directed contour $\varGamma_{\lambda}(S_{p}^{\mu})$
is used to prove the theorems in Sections~\ref{subsec:HillConvRate} and~\ref{subsec:localization}.
}
\label{fig:eigenvalues01}
\end{figure}

When Assumption~\ref{assump:approx_cond_single} or~\ref{assump:approx_cond_plural} 
are satisfied, there exist some convergent subsequences in $\bigcup_{N} \sigma(S_{p, N}^{\mu})$. 
Thus these assumptions are closely related to the convergence property of the eigenvalues of $S_{p, N}^{\mu}$. 
On this convergence property, the following two theorems are known \cite{bib:CurtisDeconinck2010}\cite{bib:ReedSimonMMMP-I}.
Theorem~\ref{thm:no-spurious-mode} gives so-called ``no-spurious-mode condition''.

\begin{thm}[{\cite[Theorem 9]{bib:CurtisDeconinck2010}}]
\label{thm:no-spurious-mode}
Let $\mathfrak{D} \subset \mathbf{C}$ be a compact set and 
$\{ \lambda_N \}$ be a sequence in $\mathfrak{D} $ with $\lambda_{N} \in \sigma(S_{p, N}^{\mu})$. 
Then for any $\varepsilon > 0$ there exists a positive integer $M$ such that the following holds true: 
for any $N$ with $N \geq M$, 
$\lambda_N$ is contained in the $\varepsilon$-neighborhood of some $\lambda \in \mathfrak{D}  \cap \sigma(S_{p}^{\mu})$.
\end{thm}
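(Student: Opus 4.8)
The plan is to argue by contradiction with a compactness-plus-residual (Weyl sequence) argument that exploits two features of the setup: the self-adjointness of $S_{p}^{\mu}$ (Assumption~\ref{assump:self-adjoint}) and the fact that the leading term $\mathrm{d}^{p}/\mathrm{d}x^{p}$ of $S_{p}^{\mu}$ in \eqref{eq:Expr_S_p_mu} is diagonal in the Fourier basis and therefore commutes with the truncation projection $\hat{P}_{N}$ of \eqref{eq:Def_P_N}. Suppose the conclusion is false. Then there exist $\varepsilon>0$ and a subsequence, still denoted $\{\lambda_{N}\}$, with $\mathrm{dist}(\lambda_{N},\,\mathfrak{D}\cap\sigma(S_{p}^{\mu}))\geq\varepsilon$. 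Since $\mathfrak{D}$ is compact we pass to a further subsequence with $\lambda_{N}\to\lambda_{\ast}\in\mathfrak{D}$, and it then suffices to prove $\lambda_{\ast}\in\sigma(S_{p}^{\mu})$: this forces $\mathrm{dist}(\lambda_{N},\,\mathfrak{D}\cap\sigma(S_{p}^{\mu}))\leq|\lambda_{N}-\lambda_{\ast}|\to0$, contradicting the choice of $\varepsilon$.

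To show $\lambda_{\ast}\in\sigma(S_{p}^{\mu})$ I would construct an approximate eigensequence from the discrete eigenvectors. For each $N$ pick a normalized $\phi_{N}\in\mathrm{ran}\,\hat{P}_{N}$ with $S_{p,N}^{\mu}\phi_{N}=\lambda_{N}\phi_{N}$. Writing $S_{p}^{\mu}=\mathrm{d}^{p}/\mathrm{d}x^{p}+B$ with $B=\sum_{j=0}^{p-1}f_{j}\,\mathrm{d}^{j}/\mathrm{d}x^{j}$, and using that $\hat{P}_{N}$ commutes with $\mathrm{d}^{p}/\mathrm{d}x^{p}$ and fixes $\phi_{N}$, the eigenrelation $\hat{P}_{N}S_{p}^{\mu}\phi_{N}=\lambda_{N}\phi_{N}$ rearranges to
\[
\frac{\mathrm{d}^{p}}{\mathrm{d}x^{p}}\phi_{N}=\lambda_{N}\phi_{N}-\hat{P}_{N}B\phi_{N},
\qquad
(S_{p}^{\mu}-\lambda_{N})\phi_{N}=(I-\hat{P}_{N})B\phi_{N}.
\]

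The crucial step is a uniform a priori bound $\|\mathrm{d}^{p}\phi_{N}/\mathrm{d}x^{p}\|_{L_{2}}\leq C$ independent of $N$. Taking norms in the first identity, using $\sup_{\mathfrak{D}}|\lambda|<\infty$, the boundedness of the $C^{\infty}$ coefficients $f_{j}$ (Assumption~\ref{assump:C_infty}), and the Fourier interpolation inequality $\|\mathrm{d}^{j}\phi/\mathrm{d}x^{j}\|\leq\|\phi\|^{1-j/p}\,\|\mathrm{d}^{p}\phi/\mathrm{d}x^{p}\|^{j/p}$ (which follows from Parseval and H\"older applied to the Fourier coefficients), one obtains an inequality of the form $y\leq C_{1}+C_{2}\,y^{\theta}$ with $y=\|\mathrm{d}^{p}\phi_{N}/\mathrm{d}x^{p}\|$ and $\theta=(p-1)/p<1$, which forces $y\leq C$ uniformly. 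Hence $\{\phi_{N}\}$ is bounded in $H^{p}([0,L])_{\mathrm{per}}$. This self-improving estimate is the main obstacle; once it is secured, everything downstream is routine.

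With the $H^{p}$ bound in hand, boundedness in $H^{p}$ together with smoothness of the $f_{j}$ makes $B\phi_{N}$ bounded in $H^{1}$, so its high-frequency tail obeys $\|(I-\hat{P}_{N})B\phi_{N}\|_{L_{2}}\leq(C/N)\|B\phi_{N}\|_{H^{1}}\to0$; that is, the residual $\|(S_{p}^{\mu}-\lambda_{N})\phi_{N}\|\to0$. By the compact embedding $H^{p}\hookrightarrow L_{2}$ a subsequence of $\{\phi_{N}\}$ converges in $L_{2}$ to some $\phi_{\ast}$ with $\|\phi_{\ast}\|=1$. Finally, self-adjointness and the spectral theorem give $\|(S_{p}^{\mu}-\lambda_{\ast})u\|\geq\mathrm{dist}(\lambda_{\ast},\sigma(S_{p}^{\mu}))\,\|u\|$ for all $u$ in the domain; applying this to $\phi_{N}$ and bounding $\|(S_{p}^{\mu}-\lambda_{\ast})\phi_{N}\|\leq\|(S_{p}^{\mu}-\lambda_{N})\phi_{N}\|+|\lambda_{N}-\lambda_{\ast}|\to0$ yields $\mathrm{dist}(\lambda_{\ast},\sigma(S_{p}^{\mu}))=0$, i.e.\ $\lambda_{\ast}\in\sigma(S_{p}^{\mu})$, which completes the contradiction and proves the theorem.
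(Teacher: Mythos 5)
Your proposal is correct, but the first thing to note is that the paper itself contains no proof of this statement: Theorem~\ref{thm:no-spurious-mode} is imported verbatim from \cite[Theorem 9]{bib:CurtisDeconinck2010}, so your argument is compared against a citation, not an internal proof, and it stands as a legitimate self-contained alternative. The mechanics check out: since $\hat{P}_{N}$ commutes with $\mathrm{d}^{p}/\mathrm{d}x^{p}$ and fixes $\phi_{N}$, the identities $(\mathrm{d}^{p}/\mathrm{d}x^{p})\phi_{N}=\lambda_{N}\phi_{N}-\hat{P}_{N}B\phi_{N}$ and $(S_{p}^{\mu}-\lambda_{N})\phi_{N}=(I-\hat{P}_{N})B\phi_{N}$ are valid; the Parseval--H\"older interpolation $\|\phi^{(j)}\|_{2}\leq\|\phi\|_{2}^{1-j/p}\|\phi^{(p)}\|_{2}^{j/p}$ holds for periodic functions, so the bootstrap $y\leq C_{1}+C_{2}\,y^{(p-1)/p}$ indeed gives a uniform $H^{p}$ bound (here $\sup_{\mathfrak{D}}|\lambda|<\infty$ by compactness); and the tail bound $\|(I-\hat{P}_{N})B\phi_{N}\|_{2}\leq (L/2\pi N)\,\|B\phi_{N}\|_{H^{1}}$ makes the residual vanish. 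Two remarks. First, your final step invokes self-adjointness through $\|(S_{p}^{\mu}-\lambda_{\ast})u\|_{2}\geq\mathrm{dist}(\lambda_{\ast},\sigma(S_{p}^{\mu}))\,\|u\|_{2}$, but the theorem as stated (and as proved by Curtis and Deconinck) does not assume Assumption~\ref{assump:self-adjoint}; this restriction is easily removed, since for any closed operator, if $\lambda_{\ast}\notin\sigma(S_{p}^{\mu})$ then the resolvent is bounded and $1=\|\phi_{N}\|_{2}\leq\|(\lambda_{\ast}I-S_{p}^{\mu})^{-1}\|_{2}\,\|(S_{p}^{\mu}-\lambda_{\ast})\phi_{N}\|_{2}\to 0$ is already a contradiction --- no spectral theorem is needed. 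Second, the compact-embedding step producing $\phi_{\ast}$ is superfluous: that limit is never used, because your closing estimate works directly with the approximate eigensequence $\{\phi_{N}\}$. What your route buys, relative to the paper's citation, is a short proof staying entirely within the Fourier-truncation framework of Section~\ref{sec:HillsMethod}, whose only structural inputs are that the leading symbol is diagonal in the Fourier basis and that the lower-order coefficients are bounded with bounded derivatives.
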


\begin{thm}[{\cite[Corollary of Lemma 10 and Theorem 16]{bib:CurtisDeconinck2010}}]
\label{thm:all_lambda_produced}
Assume that $S_{p}^{\mu}$ is self-adjoint. 
Then for any $\lambda \in \sigma(S_{p}^{\mu})$ 
there exists some subsequence $\{ \lambda_{N_{j}} \}$ of $\{ \lambda_{N} \}$ with $\lambda_{N} \in \sigma(S_{p,N}^{\mu})$ 
such that $\lambda_{N_{j}} \to \lambda$. 
\end{thm}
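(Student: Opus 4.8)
The plan is to use a trial-vector (Rayleigh--Ritz / quasimode) argument, exploiting that the self-adjointness of $S_{p,N}^{\mu}$ turns a small residual into a genuine nearby eigenvalue. Fix $\lambda \in \sigma(S_{p}^{\mu})$ with a corresponding eigenvector $\phi$, normalized so that $\|\phi\|_{L_2}=1$; since $S_{p}^{\mu}$ has domain $H^{p}([0,L])_{\mathrm{per}}$, the eigenvector satisfies $\phi \in H^{p}([0,L])_{\mathrm{per}}$. First I would form the truncated, renormalized trial vector $v_N = \hat{P}_N \phi / \|\hat{P}_N \phi\|_{L_2}$, which lies in the range of $\hat{P}_N$ and hence in the finite-dimensional space on which $S_{p,N}^{\mu}$ acts. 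Because $\hat{P}_N \phi \to \phi$ in $L_2$ (convergence of the Fourier series), the normalization is well defined for all large $N$ and $\|\hat{P}_N \phi\|_{L_2} \to 1$.

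The key identity is that, since $\hat{P}_N v_N = v_N$,
\begin{align}
(S_{p,N}^{\mu} - \lambda) v_N
= (\hat{P}_N S_{p}^{\mu} \hat{P}_N - \lambda) v_N
= \hat{P}_N (S_{p}^{\mu} - \lambda) v_N,
\notag
\end{align}
so that $\|(S_{p,N}^{\mu} - \lambda) v_N\|_{L_2} \le \|(S_{p}^{\mu} - \lambda) v_N\|_{L_2}$ using $\|\hat{P}_N\| \le 1$. It therefore suffices to prove that this residual tends to $0$.

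The main step is to show $\|(S_{p}^{\mu} - \lambda) v_N\|_{L_2} \to 0$, and I would do this by passing to the graph norm. The operator $S_{p}^{\mu}$ is a differential operator of order $p$ with smooth periodic coefficients, hence it is bounded as a map $H^{p}([0,L])_{\mathrm{per}} \to L_2([0,L])_{\mathrm{per}}$. Moreover $\hat{P}_N$ is precisely the partial-sum projection of the Fourier series, so for $\phi \in H^{p}$ the partial sums converge in the $H^{p}$ norm, i.e. $\hat{P}_N \phi \to \phi$ in $H^{p}$, because the tail $\sum_{|n|>N} (1+|n|^2)^{p} |\hat{\phi}_n|^2$ of the squared $H^{p}$ norm vanishes as $N \to \infty$. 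Applying the bounded map $S_{p}^{\mu} - \lambda$ and using $(S_{p}^{\mu}-\lambda)\phi = 0$ then gives $(S_{p}^{\mu}-\lambda)\hat{P}_N \phi \to 0$ in $L_2$; dividing by $\|\hat{P}_N \phi\|_{L_2} \to 1$ yields $\|(S_{p}^{\mu}-\lambda) v_N\|_{L_2} \to 0$. This graph-norm convergence of the projected eigenvector is the crux of the argument and the place where the regularity $\phi \in H^{p}$ and the smoothness of the coefficients are genuinely used; I expect it to be the main obstacle.

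Finally, I would invoke self-adjointness. Since $S_{p}^{\mu}$ is self-adjoint, so is $S_{p,N}^{\mu}$, which is thus a Hermitian operator on a finite-dimensional space. The spectral theorem then gives, for the unit vector $v_N$,
\begin{align}
\mathrm{dist}(\lambda, \sigma(S_{p,N}^{\mu})) \le \|(S_{p,N}^{\mu} - \lambda) v_N\|_{L_2} \to 0.
\notag
\end{align}
Because $\sigma(S_{p,N}^{\mu})$ is finite, the distance is attained, so choosing $\lambda_N \in \sigma(S_{p,N}^{\mu})$ to be a nearest eigenvalue to $\lambda$ produces a sequence with $\lambda_N \to \lambda$, and a fortiori a subsequence $\lambda_{N_j} \to \lambda$, as claimed. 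Note that self-adjointness is essential precisely in this last step: without it a small residual could be a mere pseudospectral artefact, with $\lambda$ still far from $\sigma(S_{p,N}^{\mu})$.
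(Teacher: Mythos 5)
Your proof is correct, but note that the paper does not prove this statement at all: Theorem~\ref{thm:all_lambda_produced} is quoted as a known result from Curtis and Deconinck (cited as a corollary of their Lemma~10 and Theorem~16), where the mechanism is abstract operator approximation --- essentially generalized strong resolvent convergence of $S_{p,N}^{\mu}$ to $S_{p}^{\mu}$ combined with the spectral inclusion theorem for self-adjoint operators (cf.\ the Reed--Simon reference the paper lists alongside it). Your argument replaces that machinery with a direct quasimode computation: truncate an eigenvector, observe $(S_{p,N}^{\mu}-\lambda)v_N=\hat{P}_N(S_{p}^{\mu}-\lambda)v_N$, drive the residual to zero via $H^{p}$-convergence of Fourier partial sums and the boundedness of $S_{p}^{\mu}:H^{p}\to L_2$ (valid here since the coefficients are $C^{\infty}$, hence bounded), and convert the small residual into a nearby eigenvalue by the Hermitian residual bound $\mathrm{dist}(\lambda,\sigma(S_{p,N}^{\mu}))\le\|(S_{p,N}^{\mu}-\lambda)v_N\|_{2}$. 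Each step checks out, including the two places where hypotheses are genuinely needed: the eigenvector exists because $\sigma(S_{p}^{\mu})$ consists only of eigenvalues (stated in Section~2 of the paper), and self-adjointness of the truncation is exactly what licenses the residual bound. What your route buys is self-containedness and quantitativeness --- it yields an explicit rate $|\lambda-\lambda_N|\le C\|\phi-\hat{P}_N\phi\|_{H^p}$, in the same spirit as the paper's own Lemmas~\ref{lem:norm_P_N_S}--\ref{lem:ProjEstim_analytic}; what the cited abstract route buys is applicability without tracking regularity of individual eigenvectors. One small point of rigor you may wish to add: justify that $S_{p,N}^{\mu}$ restricted to the range of $\hat{P}_N$ is Hermitian (trigonometric polynomials lie in the domain of $S_{p}^{\mu}$, so symmetry of $S_{p}^{\mu}$ transfers directly), which the paper asserts immediately after Assumption~\ref{assump:self-adjoint}.
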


In Section~\ref{subsec:localization}, 
a sufficient condition for~\eqref{eq:3a_in} and~\eqref{eq:3a_out} in 
Assumption~\ref{assump:approx_cond_single} is given by 
Theorem~\ref{thm:local_2b_4a}.
We can verify this sufficient condition using 
Theorems~\ref{thm:no-spurious-mode}, \ref{thm:all_lambda_produced} and 
the following theorem for a concrete problem 
as shown in Section~\ref{sec:NumComp}.

\begin{thm}[Gershgorin's Theorem \cite{bib:VargaGersh}]
\label{thm:Gershgorin}
For any $n\times n$ matrix $V = (v_{ij})$ with $v_{ij} \in \mathbf{C}$, 
all eigenvalues of $V$ are contained in $\bigcup_{i=1}^{n} C_{i}$, 
where $C_1, \ldots, C_n$ are disks defined as
\begin{align}
C_{i} = \left\{ z \in \mathbf{C}\, \left| \, | z - v_{ii} | \leq \sum_{j \neq i} |v_{ij}| \right. \right\}.
\label{eq:GershDisc}
\end{align}
Moreover, 
each connected component of $\bigcup_{i=1}^{n} C_{i}$
contains as many eigenvalues of $V$ as 
the disks composing it.
\end{thm}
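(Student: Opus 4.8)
The plan is to establish the two assertions in turn: the containment of every eigenvalue of $V$ in $\bigcup_{i=1}^{n} C_{i}$ is elementary, while the component-wise count requires a homotopy (continuity) argument. For the containment, let $\lambda$ be an eigenvalue of $V$ with eigenvector $x = (x_1, \ldots, x_n)^{\mathrm{T}} \neq 0$, and pick an index $i$ attaining $|x_i| = \max_{1 \le j \le n} |x_j| > 0$. The $i$-th component of $Vx = \lambda x$ reads $(\lambda - v_{ii}) x_i = \sum_{j \neq i} v_{ij} x_j$; taking moduli and using $|x_j| \le |x_i|$ gives $|\lambda - v_{ii}|\,|x_i| \le \sum_{j\neq i} |v_{ij}|\,|x_i|$, and dividing by $|x_i|>0$ yields $|\lambda - v_{ii}| \le \sum_{j\neq i}|v_{ij}|$. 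Hence $\lambda \in C_i \subseteq \bigcup_i C_i$, which proves the first claim.

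For the count, introduce the linear homotopy $V(t) = D + t(V - D)$, $t \in [0,1]$, where $D$ is the diagonal part of $V$. Then $V(0) = D$, $V(1) = V$, and the Gershgorin disk of $V(t)$ centred at $v_{ii}$ has radius $t \sum_{j \neq i}|v_{ij}|$, so it is contained in $C_i$ for every $t$. Fix a connected component $G$ of $\bigcup_i C_i$, composed of the disks $\{C_i : i \in I\}$ with $m = |I|$. At $t=0$ the eigenvalues of $V(0)=D$ are exactly the diagonal entries $v_{ii}$, and precisely those with $i \in I$ lie in $G$: any centre lying in $G$ forces its entire disk into the connected component $G$, whence its index belongs to $I$. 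Thus $D$ has exactly $m$ eigenvalues in $G$, counted with multiplicity.

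Because $G$ is a connected component it is separated from the remaining disks by a positive distance, so one may choose a closed contour $\Gamma$ lying in the complement of $\bigcup_i C_i$ and enclosing exactly $G$. The number of eigenvalues of $V(t)$ inside $\Gamma$ is $N(t) = \frac{1}{2\pi \mathrm{i}} \oint_{\Gamma} \frac{p'(z;t)}{p(z;t)}\,\mathrm{d}z$ with $p(z;t)=\det(zI - V(t))$. Applying the containment already proved to $V(t)$ shows that all eigenvalues of $V(t)$ remain inside $\bigcup_i C_i$, and hence never meet $\Gamma$; therefore the integrand is jointly continuous in $z$ and $t$, so $N(t)$ is a continuous, integer-valued, hence constant function of $t$. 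Consequently $N(1) = N(0) = m$, i.e.\ $V$ has exactly $m$ eigenvalues in $G$.

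The main obstacle is the counting part, and specifically ruling out that an eigenvalue of $V(t)$ migrates from one component to another as $t$ runs from $0$ to $1$. The decisive point is that the elementary containment, applied uniformly along the homotopy, confines every eigenvalue to $\bigcup_i C_i$ for all $t$; together with the positive separation of $G$ from the other components, this keeps $\Gamma$ free of eigenvalues throughout the deformation and legitimises the argument-principle count.
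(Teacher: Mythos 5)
Your proof is correct, but note that the paper itself offers no proof of this statement: Theorem~\ref{thm:Gershgorin} is quoted as a classical result from Varga's book \cite{bib:VargaGersh} and is used only as a tool, to verify the localization condition~\eqref{eq:local_2b} in the numerical experiments of Section~\ref{sec:NumComp}. What you have written is the standard textbook argument --- the row-maximum estimate for the containment, and the homotopy $V(t) = D + t(V-D)$ combined with an eigenvalue-counting argument for the component-wise count --- so it supplies exactly the proof the paper delegates to the literature; Varga's own proof runs along the same lines, using continuity of the roots of the characteristic polynomial in $t$ rather than the explicit argument-principle integral. One technical point deserves care: a connected finite union of closed disks need not be simply connected, so another component of $\bigcup_{i=1}^{n} C_{i}$ could lie in a ``hole'' of the component $G$, and then no single Jordan curve in the complement of $\bigcup_{i=1}^{n} C_{i}$ encloses exactly $G$. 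The fix is routine: replace your contour $\Gamma$ by a cycle (a finite formal sum of closed curves) lying in the complement of $\bigcup_{i=1}^{n} C_{i}$ with winding number $1$ around every point of $G$ and $0$ around every point of the remaining disks; such a cycle exists because $G$ and the union of the other components are disjoint compact sets, and the argument-principle count $N(t)$, and hence your conclusion $N(1) = N(0) = m$, goes through verbatim.
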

\noindent
This theorem is used to estimate the radius $r_{\lambda}$ 
in the conditions~\eqref{eq:3a_in} and~\eqref{eq:3a_out}, 
whereas Theorems~\ref{thm:no-spurious-mode} and~\ref{thm:all_lambda_produced} 
guarantee that the sequence of the approximate eigenvalues converges to an exact eigenvalue.

\subsection{A Priori Estimates of Convergence Rates}
\label{subsec:HillConvRate}

When Assumptions~\ref{assump:self-adjoint}, 2 and 3 
are satisfied, we can obtain a priori estimates of the convergence rates as follows. 

\begin{thm}\label{thm:main01}
Let Assumptions~\ref{assump:self-adjoint} 
and~\ref{assump:approx_cond_single} be satisfied. Then, 
when Assumption~\ref{assump:C_infty} is satisfied, namely $f_{j}$ being $C^{\infty}$, 
for a sufficiently large $N$ and
any positive integer $q$, there exists a positive constant $C_{1}$ 
such that
\begin{align}
 | \lambda - \lambda_N | \leq C_{1}\, N^{-q}, 
\label{eq:main_01}
\end{align}
where $C_{1}$ depends only on 
$p$, $f_{0},\ldots, f_{p-1}$, $\lambda$, $r_{\lambda}$, $q$
and an eigenvector $\phi$ of $S_{p}^{\mu}$ corresponding to $\lambda$. 
In addition, 
when Assumption~\ref{assump:analytic} is satisfied, namely $f_{j}$ being analytic, 
for a sufficiently large $N$ and 
any $\varepsilon$ with $0< \varepsilon < d$, there exists a positive constant $C_{2}$ 
such that
\begin{align}
 | \lambda - \lambda_N | \leq C_{2}\, N^{p+1/2} \exp \left( -\frac{2 \pi (d - \varepsilon)}{L} N \right), 
\label{eq:main_02}
\end{align}
where $C_{2}$ depends only on 
$p$, $f_{0},\ldots, f_{p-1}$, $\lambda$, $r_{\lambda}$, $d$, $\varepsilon$
and $\phi$. 
\end{thm}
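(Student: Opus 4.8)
The plan is to reduce both estimates to the decay of the Fourier coefficients $\hat\phi_n$ of a fixed eigenvector $\phi$ of $S_p^\mu$, and then to turn a truncation residual into an eigenvalue bound using self-adjointness together with the Dunford projection attached to the contour $\varGamma_\lambda(S_p^\mu)$. First I would show that $\phi$ inherits the regularity of the coefficients: writing the eigenequation as $\partial_x^{\,p}\phi=(\lambda-f_0)\phi-\sum_{j=1}^{p-1}f_j\,\partial_x^{\,j}\phi$ and bootstrapping, Assumption~\ref{assump:C_infty} gives $\phi\in C^\infty$, while under Assumption~\ref{assump:analytic} the classical analytic-continuation theory for linear ODEs with analytic coefficients and leading coefficient $\equiv 1$ shows that $\phi$ extends analytically to all of $\mathcal D_d$. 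Shifting the contour in $\hat\phi_n=L^{-1/2}\int_0^L\phi(x)\,\mathrm{e}^{2\pi\mathrm{i} nx/L}\,\mathrm{d}x$ then yields, for every $k$, $|\hat\phi_n|\le C_k|n|^{-k}$ in the $C^\infty$ case and $|\hat\phi_n|\le C_\varepsilon\,\mathrm{e}^{-2\pi(d-\varepsilon)|n|/L}$ for each $0<\varepsilon<d$ in the analytic case.

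Next I would set $\psi_N:=\hat P_N\phi$, $\eta_N:=(I-\hat P_N)\phi$, and $r_N:=(S_{p,N}^\mu-\lambda)\psi_N$. A direct computation from $S_p^\mu\phi=\lambda\phi$ gives $r_N=-\hat P_N S_p^\mu\eta_N$, and since $\partial_x^{\,p}\eta_N$ has only modes $|n|>N$ the top-order term is annihilated by $\hat P_N$, leaving $r_N=-\hat P_N\sum_{j=0}^{p-1}f_j\,\partial_x^{\,j}\eta_N$. To isolate the single approximating eigenvalue I would invoke the Dunford projection $P_N=\frac{1}{2\pi\mathrm{i}}\oint_{\varGamma_\lambda(S_p^\mu)}(zI-S_{p,N}^\mu)^{-1}\,\mathrm{d}z$ over the positively oriented circle of radius $r_\lambda$ about $\lambda$. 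By \eqref{eq:3a_in}–\eqref{eq:3a_out} one has $\mathrm{dist}(z,\sigma(S_{p,N}^\mu))\ge r_\lambda/2$ on this circle, so self-adjointness gives the uniform bound $\|(zI-S_{p,N}^\mu)^{-1}\|\le 2/r_\lambda$. Substituting $\psi_N=(zI-S_{p,N}^\mu)^{-1}\bigl[(z-\lambda)\psi_N-r_N\bigr]$ and integrating yields $u_N:=P_N\psi_N=\psi_N+\frac{1}{2\pi\mathrm{i}}\oint_{\varGamma_\lambda(S_p^\mu)}\frac{(zI-S_{p,N}^\mu)^{-1}r_N}{z-\lambda}\,\mathrm{d}z$, whence $\|u_N-\psi_N\|\le(2/r_\lambda)\|r_N\|$; thus $P_N$ confirms that $\lambda_N$ is the only eigenvalue of $S_{p,N}^\mu$ inside $\varGamma_\lambda(S_p^\mu)$ and that $u_N$ converges to $\phi$.

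Then I would extract the eigenvalue error. Since $S_{p,N}^\mu$ is self-adjoint, the standard residual inequality gives $\mathrm{dist}(\lambda,\sigma(S_{p,N}^\mu))\le\|r_N\|/\|\psi_N\|$, and by \eqref{eq:3a_in}–\eqref{eq:3a_out} the realizing eigenvalue is $\lambda_N$ once $\|r_N\|$ is small, so $|\lambda-\lambda_N|\le\|r_N\|/\|\psi_N\|$. As $\|\psi_N\|\to\|\phi\|>0$, it remains to estimate $\|r_N\|$ through the tails above: each surviving term obeys $\|f_j\,\partial_x^{\,j}\eta_N\|\le\|f_j\|_\infty(2\pi/L)^j\bigl(\sum_{|n|>N}|n|^{2j}|\hat\phi_n|^2\bigr)^{1/2}$. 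The polynomial decay then gives $\|r_N\|\le C_1 N^{-q}$ for every $q$ (choosing the order of differentiation in the decay estimate large relative to $q$), which proves \eqref{eq:main_01}; the exponential decay, summing the resulting geometric tail, produces a bound of the form $C_2 N^{p+1/2}\mathrm{e}^{-2\pi(d-\varepsilon)N/L}$, which proves \eqref{eq:main_02}. A quadratic refinement, using $\langle r_N,\psi_N\rangle=\langle(S_p^\mu-\lambda)\eta_N,\eta_N\rangle$, is available and in fact yields a doubly-exponential rate, but the stated bounds already follow from the linear estimate; in each case the constant depends only on the data listed in the theorem.

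The hard part will be the analytic-regularity step together with the sharp rate: showing that $\phi$ continues to the whole strip $\mathcal D_d$ (and not merely a smaller one) requires continuing the associated first-order $p\times p$ system along arbitrary paths in $\mathcal D_d$, and obtaining the constant $2\pi(d-\varepsilon)/L$ needs a uniform bound for $\phi$ on the shifted lines $\mathrm{Im}\,z=\pm(d-\varepsilon)$, which is the source of the unavoidable $\varepsilon$-loss. A secondary technical point is that the resolvent bound on $\varGamma_\lambda(S_p^\mu)$ must hold uniformly in $N$; this is exactly what the gap condition \eqref{eq:3a_out} in Assumption~\ref{assump:approx_cond_single} supplies, and without it the single target eigenvalue $\lambda_N$ could not be separated from the rest of $\sigma(S_{p,N}^\mu)$.
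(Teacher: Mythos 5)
Your proof is correct, and it reaches \eqref{eq:main_01} and \eqref{eq:main_02} by a genuinely different mechanism from the paper's. The paper (Lemmas~\ref{lem:make_approx_eigenvec_2a} and~\ref{eq:StrResolEstim}) Dunford-projects the \emph{exact} eigenvector $\phi$ onto the $\lambda_{N}$-eigenspace of $S_{p,N}^{\mu}$, normalizes to obtain $\phi_{N}$, expands $\lambda - \lambda_{N}$ as three inner products using self-adjointness, and then estimates $\| (S_{p}^{\mu} - S_{p,N}^{\mu})\phi \|_{2} \leq (|\lambda| + \| \hat{P}_{N} S_{p}^{\mu} \|_{2}) \| (I - \hat{P}_{N})\phi \|_{2}$, which requires the separate operator-norm bound $\| \hat{P}_{N} S_{p}^{\mu} \|_{2} \leq C N^{p+1/2}$ of Lemma~\ref{lem:norm_P_N_S}; that lemma is the sole source of the factor $N^{p+1/2}$ in \eqref{eq:main_02}. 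You instead work with the truncated exact eigenvector $\psi_{N} = \hat{P}_{N}\phi$ and its residual $r_{N} = (S_{p,N}^{\mu} - \lambda)\psi_{N} = -\hat{P}_{N} \sum_{j=0}^{p-1} f_{j}\, \partial_{x}^{j} \eta_{N}$, where $\eta_{N} = (I - \hat{P}_{N})\phi$; the observation that $\hat{P}_{N}$ annihilates $\partial_{x}^{p}\eta_{N}$ is exactly what lets you bypass Lemma~\ref{lem:norm_P_N_S}. You then convert the residual into an eigenvalue error via the self-adjoint residual inequality $\mathrm{dist}(\lambda, \sigma(S_{p,N}^{\mu})) \leq \| r_{N} \|_{2} / \| \psi_{N} \|_{2}$, with \eqref{eq:3a_in}--\eqref{eq:3a_out} identifying the minimizing eigenvalue as $\lambda_{N}$ (this identification is in fact unconditional: the other eigenvalues lie at distance at least $2 r_{\lambda}$ from $\lambda$ while $|\lambda - \lambda_{N}| \leq r_{\lambda}/2$, so no smallness of $\| r_{N} \|_{2}$ is needed for it). Both arguments use self-adjointness in equivalent ways---the paper's identity $\| (\nu I - S_{p,N}^{\mu})^{-1} \|_{2} = \mathrm{dist}(\nu, \sigma(S_{p,N}^{\mu}))^{-1}$ is the same fact as your residual inequality---and both rest on identical regularity input: your bootstrap, analytic continuation, and Paley--Wiener contour shift are precisely Lemmas~\ref{lem:EigenSmooth_C_infty}, \ref{lem:EigenSmooth_analytic}, \ref{lem:ProjEstim_C_infty} and~\ref{lem:ProjEstim_analytic}. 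What your route buys: it is more elementary, and in the analytic case it naturally produces a prefactor of order $N^{p-1}$, slightly sharper than the stated $N^{p+1/2}$. What the paper's route buys: the projection-plus-Rayleigh-quotient machinery is what carries over to clusters of approximate eigenvalues (Theorem~\ref{thm:main03} under Assumption~\ref{assump:approx_cond_plural}) and to the role-reversed a posteriori bound (Theorem~\ref{thm:local_2b_4a}), so one framework serves all three theorems. Two minor blemishes in your write-up, neither affecting correctness: the Dunford detour computing $u_{N} = P_{N}\psi_{N}$ is redundant, since the fact that $\lambda_{N}$ is the only eigenvalue of $S_{p,N}^{\mu}$ inside $\varGamma_{\lambda}(S_{p}^{\mu})$ is what \eqref{eq:3a_out} asserts rather than something to be confirmed; and the aside that a quadratic refinement yields a ``doubly-exponential rate'' is a misnomer---it would square the bound, i.e.\ double the exponential decay rate, not produce decay of the form $\exp(-\mathrm{e}^{N})$.
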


\begin{thm}\label{thm:main03}
Let Assumptions~\ref{assump:self-adjoint} 
and~\ref{assump:approx_cond_plural} be satisfied. Then, 
when Assumption~\ref{assump:C_infty} is satisfied, namely $f_{j}$ being $C^{\infty}$, 
for a sufficiently large $N$, 
some choice of an index $i_{N} \in \{1,\ldots, k \}$ and 
any positive integer $q$,
there exists a positive constant $C_{1}$ 
such that
\begin{align}
 | \lambda - \lambda_{N, i_{N}} | \leq C_{1}\, N^{-q}, 
\label{eq:main_03}
\end{align}
where $C_{1}$ depends only on 
$p$, $f_{0},\ldots, f_{p-1}$, $\lambda$, $r_{\lambda}$, $q$ 
and an eigenvector $\phi$ of $S_{p}^{\mu}$ corresponding to $\lambda$. 
In addition, 
when Assumption~\ref{assump:analytic} is satisfied, namely $f_{j}$ being analytic, 
for a sufficiently large $N$, 
some choice of an index $i_{N} \in \{1,\ldots, k \}$ and 
any $\varepsilon$ with $0< \varepsilon < d$,
there exists a positive constant $C_{2}$
such that
\begin{align}
 | \lambda - \lambda_{N, i_{N}} | \leq C_{2}\, N^{2p+1} \exp \left( -\frac{2 \pi (d - \varepsilon)}{k L} N \right), 
\label{eq:main_04}
\end{align}
where $C_{2}$ depends only on 
$p$, $f_{0},\ldots, f_{p-1}$, $\lambda$, $r_{\lambda}$, $d$, $\varepsilon$ and $\phi$. 
\end{thm}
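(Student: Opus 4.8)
The plan is to treat both theorems uniformly by manufacturing a single trial vector from the exact eigenfunction, pushing it into the finite-dimensional approximate eigenspace by a Dunford (Riesz) projection, and reading the eigenvalue error off the residual. Fix $\lambda\in\sigma(S_p^\mu)$ with a unit eigenfunction $\phi$, $S_p^\mu\phi=\lambda\phi$, and let $\lambda_{N,1},\dots,\lambda_{N,k}$ be the sequences from Assumption~\ref{assump:approx_cond_plural}. For $N>N_0$ the inner bound \eqref{eq:3b_in} places all $k$ approximate eigenvalues inside the circle $\{\,|z-\lambda|=r_\lambda\,\}$, while the outer bound \eqref{eq:3b_out} keeps every other element of $\sigma(S_{p,N}^\mu)$ at distance $\ge 2r_\lambda$ from $\lambda$. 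Hence the positively oriented contour $\Gamma=\{\,|z-\lambda|=r_\lambda\,\}$ encloses exactly these $k$ eigenvalues (with multiplicity), and
\[
P_N=\frac{1}{2\pi\mathrm{i}}\oint_{\Gamma}(z-S_{p,N}^\mu)^{-1}\,\mathrm{d}z
\]
is the spectral projection onto their eigenspace; it is an orthogonal projection because $S_{p,N}^\mu$ is self-adjoint (Assumption~\ref{assump:self-adjoint}). I take as trial vector $u_N=P_N\hat P_N\phi$, the projection of the truncated eigenfunction \eqref{eq:Def_P_N}.

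Next I record the residual identity. Writing $r_N=(S_{p,N}^\mu-\lambda)\hat P_N\phi$ and using $S_{p,N}^\mu=\hat P_N S_p^\mu\hat P_N$, $\hat P_N^2=\hat P_N$, and $\lambda\hat P_N\phi=\hat P_N S_p^\mu\phi$, one gets $r_N=-\hat P_N S_p^\mu(I-\hat P_N)\phi$, so $\|r_N\|\le\|S_p^\mu(I-\hat P_N)\phi\|$. Since $P_N$ commutes with $S_{p,N}^\mu$, one checks $(S_{p,N}^\mu-\lambda)u_N=P_N r_N$ and $(S_{p,N}^\mu-\lambda)(I-P_N)\hat P_N\phi=(I-P_N)r_N$. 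On $\mathrm{range}(I-P_N)$ the operator $S_{p,N}^\mu-\lambda$ is invertible with inverse norm $\le 1/(2r_\lambda)$ by \eqref{eq:3b_out}, giving $\|(I-P_N)\hat P_N\phi\|\le\|r_N\|/(2r_\lambda)$ and hence $\|u_N\|\ge\|\hat P_N\phi\|-\|r_N\|/(2r_\lambda)\ge\tfrac12$ for $N$ large (as $\|\hat P_N\phi\|\to1$ and $\|r_N\|\to0$). Expanding $u_N=\sum_i c_i\psi_{N,i}$ in an orthonormal eigenbasis $\{\psi_{N,i}\}$ of $\mathrm{range}(P_N)$ and using $\|P_N\|=1$,
\[
\min_{1\le i\le k}|\lambda-\lambda_{N,i}|^2\,\|u_N\|^2\le\sum_{i}|c_i|^2|\lambda-\lambda_{N,i}|^2=\|(S_{p,N}^\mu-\lambda)u_N\|^2=\|P_N r_N\|^2\le\|r_N\|^2,
\]
so, taking $i_N$ to be the minimizing index, $|\lambda-\lambda_{N,i_N}|\le 2\|r_N\|$.

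It remains to estimate $\|r_N\|\le\|S_p^\mu(I-\hat P_N)\phi\|$ through the decay of the Fourier coefficients $\hat\phi_n$. Since the coefficients $f_j$ are $C^\infty$ (Assumption~\ref{assump:C_infty}), respectively analytic on $\mathcal D_d$ (Assumption~\ref{assump:analytic}), the eigenfunction $\phi$ inherits that regularity by hypoellipticity, so $|\hat\phi_n|$ decays faster than any power of $|n|$ in the $C^\infty$ case and like $e^{-2\pi(d-\varepsilon)|n|/L}$ in the analytic case. Applying $S_p^\mu=\frac{\mathrm{d}^p}{\mathrm{d}x^p}+\sum_{j<p}f_j\frac{\mathrm{d}^j}{\mathrm{d}x^j}$ to the tail $(I-\hat P_N)\phi$ multiplies the $n$-th mode by a factor $\lesssim|n|^p$ and convolves it with the (rapidly, resp.\ exponentially) decaying Fourier coefficients of the $f_j$; an $\ell^2$ estimate of the resulting tail then yields $\|r_N\|\le C_1 N^{-q}$ for every $q$, proving \eqref{eq:main_03}, and $\|r_N\|\lesssim N^{p+1/2}e^{-2\pi(d-\varepsilon)N/L}$ in the analytic case. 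Because $N^{p+1/2}e^{-2\pi(d-\varepsilon)N/L}\le N^{2p+1}e^{-2\pi(d-\varepsilon)N/(kL)}$ for $k\ge 1$ and large $N$, the bound \eqref{eq:main_04} follows a fortiori.

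The main obstacle is the analytic-case tail estimate: one must control the convolution of the exponentially small tail of $\{\hat\phi_n\}$ against the Fourier coefficients of the variable coefficients $f_j$ after $p$ differentiations, keeping track of the exact polynomial factor, which forces a careful use of analytic hypoellipticity to extend $\phi$ to $\mathcal D_{d-\varepsilon}$ and obtain the sharp strip bound $|\hat\phi_n|\lesssim e^{-2\pi(d-\varepsilon)|n|/L}$. If instead one prefers a derivation that reproduces the exponent $-2\pi(d-\varepsilon)N/(kL)$ intrinsically rather than by the comparison above, one bounds the product $\prod_{i=1}^{k}|\lambda-\lambda_{N,i}|=\bigl|\det\!\big((S_{p,N}^\mu-\lambda)|_{\mathrm{range}(P_N)}\big)\bigr|$ by $2\|r_N\|$ times the remaining $k-1$ factors (each $\le r_\lambda/2$ by \eqref{eq:3b_in}) and then passes to the minimum through the geometric-mean inequality $\min_i|\lambda-\lambda_{N,i}|\le\bigl(\prod_i|\lambda-\lambda_{N,i}|\bigr)^{1/k}$; the $k$-th root is exactly what dilutes the exponential order by the factor $1/k$, while the accumulated polynomial factors are absorbed into $N^{2p+1}$.
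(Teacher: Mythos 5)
Your proof is correct, but it takes a genuinely different route from the paper, and in fact it establishes a \emph{stronger} estimate than the theorem asserts. The paper's proof (Lemmas~\ref{lem:make_approx_eigenvec_2b} and~\ref{eq:StrResolEstim_02}) projects the \emph{exact} eigenvector $\phi$ by the Dunford integral $P_{\varLambda_N}(S_{p,N}^{\mu})$, obtains a linear combination of approximate eigenvectors, and isolates a single one by applying the polynomial $g_N(S_{p,N}^{\mu})=(S_{p,N}^{\mu}-\lambda_{N,2}I)\cdots(S_{p,N}^{\mu}-\lambda_{N,k'_N}I)$; multiplying the resulting Rayleigh-quotient identity by $|g_N(\lambda)|$ produces a bound on $|\lambda-\lambda_{N,i_N}|^{k'_N}$, and it is precisely the final $k'_N$-th root that dilutes the exponential rate by $1/k$ and inflates the polynomial factor to $N^{2p+1}$ in \eqref{eq:main_04}. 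You instead run a Kato--Temple/Bauer--Fike-type residual argument: with the trial vector $u_N=P_N\hat P_N\phi$, the commutation of $P_N$ with $S_{p,N}^{\mu}$, the orthogonality of $P_N$ (self-adjointness), and the spectral theorem on $\mathrm{range}(P_N)$ give $\min_i|\lambda-\lambda_{N,i}|\,\|u_N\|\le\|r_N\|$ \emph{directly}, with no root extraction; together with your lower bound $\|u_N\|\ge 1/2$ (which correctly uses the resolvent bound $1/(2r_\lambda)$ on $\mathrm{range}(I-P_N)$ coming from \eqref{eq:3b_out}) this yields $|\lambda-\lambda_{N,i_N}|\le 2\|r_N\|\lesssim N^{p+1/2}\exp(-2\pi(d-\varepsilon)N/L)$, i.e.\ the same rate as the single-eigenvalue Theorem~\ref{thm:main01}, from which \eqref{eq:main_03} and \eqref{eq:main_04} follow a fortiori. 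What the paper's heavier machinery buys is nothing here that your argument does not; conversely, your argument shows the $1/k$ loss in \eqref{eq:main_04} is an artifact of the proof technique, not of the problem. The regularity and tail estimates you invoke at sketch level (smoothness/analytic extension of $\phi$ and the resulting Fourier decay, plus the $N^{p+1/2}$ growth from applying $S_p^{\mu}$) are exactly the content of the paper's Lemmas~\ref{lem:norm_P_N_S}--\ref{lem:ProjEstim_analytic} and~\ref{lem:EigenSmooth_C_infty}--\ref{lem:EigenSmooth_analytic}, so nothing is missing there.

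One caution: your closing alternative derivation via $\prod_{i=1}^{k}|\lambda-\lambda_{N,i}|=\bigl|\det\bigl((S_{p,N}^{\mu}-\lambda)|_{\mathrm{range}(P_N)}\bigr)\bigr|$ is not quite right as stated, since $\dim\mathrm{range}(P_N)$ need not equal $k$ (the $\lambda_{N,i}$ may have higher multiplicities or coincide), so the determinant is a product over eigenvalues with multiplicity, not over the $k$ listed sequences. This does not affect your main argument, which never uses that identity.
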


\subsection{A Posteriori Error Bounds}
\label{subsec:localization}

When Assumptions~\ref{assump:self-adjoint} and~\ref{assump:C_infty} 
are satisfied, we can explicitly obtain a posteriori estimates of the error bounds as follows. 
Here, for simplicity, only the case of Assumption~\ref{assump:approx_cond_single} is considered. 

\begin{thm}
\label{thm:local_2b_4a}
Let Assumptions~\ref{assump:self-adjoint} and~\ref{assump:C_infty} be satisfied. 
Assume that there exists $\zeta \in \mathbf{C}$ and $r>0$ such that, 
for any $N$, 
\begin{align}
\lambda_{N} \in B_{\zeta}(r)
\quad \text{and}\quad  
B_{\zeta}(9r) \cap \left( \sigma(S_{p, N}^{\mu}) \setminus \{ \lambda_{N} \} \right) = \emptyset,
\label{eq:local_2b}
\end{align}
where $\lambda_{N} \in \sigma(S_{p, N}^{\mu})$ and $B_{\zeta}(r)$ is defined by~\eqref{eq:DefDisk}. 
Then there uniquely exists $\lambda \in \sigma(S_{p}^{\mu}) \cap B_{\zeta}(r)$ satisfying 
\begin{align}
| \lambda - \lambda_{N} | 
\leq &
\left(
5 + \frac{3 |\zeta|}{r}
\right)
\frac{(2\pi N)^{p}}{L^{p+1/2}} \notag \\
& \cdot \sum_{j=0}^{p} 
\left(
\sum_{N < |l| < 2N} 
\sum_{m = l - N}^{l + N} 
\left| (\hat{f}_{j})_{m} (\hat{\phi}_{N})_{l-m} \right|
+
(2N+1)
\sum_{|m| \geq N} 
\left| (\hat{f}_{j})_{m} \right|
\right), 
\label{eq:bound}
\end{align}
where $f_{p} \equiv 1$, 
$\phi_{N}$ is an eigenvector corresponding to $\lambda_{N}$, 
and 
$(\hat{f}_{j})_{m}\ (m=0, \pm 1, \pm 2, \ldots)$ and 
$(\hat{\phi}_{N})_{n}\ (n=0, \pm 1, \pm 2, \ldots)$ 
are the Fourier coefficients of 
$f_{j}\ (j=0,\ldots, p)$ and 
$\phi_{N}$, respectively. 
\end{thm}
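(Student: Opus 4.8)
The plan is to translate the eigenvalue error into the size of the residual $\rho_{N}:=(S_{p}^{\mu}-\lambda_{N})\phi_{N}$ and then to bound that residual explicitly in Fourier space. The starting point is that, since $\phi_{N}$ lies in the range of $\hat{P}_{N}$ and $S_{p,N}^{\mu}\phi_{N}=\lambda_{N}\phi_{N}$, one has $\hat{P}_{N}S_{p}^{\mu}\phi_{N}=\lambda_{N}\phi_{N}$, so that
\begin{align}
\rho_{N}=(S_{p}^{\mu}-\lambda_{N})\phi_{N}=(I-\hat{P}_{N})S_{p}^{\mu}\phi_{N}.
\label{eq:plan_res}
\end{align}
I normalise $\phi_{N}$ so that $\|\phi_{N}\|_{L_{2}}=1$, i.e. $\sum_{n}|(\hat{\phi}_{N})_{n}|^{2}=1$; in particular $|(\hat{\phi}_{N})_{n}|\le 1$ for every $n$. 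Identity~\eqref{eq:plan_res} exhibits the residual as exactly the high-frequency part ($|l|>N$) of $S_{p}^{\mu}\phi_{N}$, which is what makes it small and explicitly computable.

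Next I would compute the Fourier coefficients of $\rho_{N}$. Writing $S_{p}^{\mu}\phi_{N}=\sum_{j=0}^{p}f_{j}\,(\mathrm{d}^{j}/\mathrm{d}x^{j})\phi_{N}$ with $f_{p}\equiv1$, each derivative multiplies $(\hat{\phi}_{N})_{n}$ by $(-\mathrm{i}\,2\pi n/L)^{j}$ and multiplication by $f_{j}$ becomes convolution with $(\hat{f}_{j})_{m}$, so for $|l|>N$ the $l$-th coefficient of $\rho_{N}$ is $L^{-1/2}\sum_{j=0}^{p}\sum_{m}(\hat{f}_{j})_{m}(\hat{\phi}_{N})_{l-m}(-\mathrm{i}\,2\pi(l-m)/L)^{j}$. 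Using $|l-m|\le N$ on the support of $\hat{\phi}_{N}$ I bound $(2\pi|l-m|/L)^{j}\le(2\pi N/L)^{p}$, pass from the $\ell^{2}$ to the $\ell^{1}$ norm by Parseval, and split $|l|>N$ into $N<|l|<2N$ and $|l|\ge2N$. In the far range $|l|\ge2N$ every contributing index satisfies $|m|\ge N$, and summing $|(\hat{\phi}_{N})_{l-m}|\le1$ over the at most $2N+1$ admissible indices yields the factor $(2N+1)$. This gives precisely
\begin{align}
\|\rho_{N}\|_{L_{2}}
\le
\frac{(2\pi N)^{p}}{L^{p+1/2}}
\sum_{j=0}^{p}\Biggl(
\sum_{N<|l|<2N}\sum_{m=l-N}^{l+N}\bigl|(\hat{f}_{j})_{m}(\hat{\phi}_{N})_{l-m}\bigr|
+(2N+1)\sum_{|m|\ge N}\bigl|(\hat{f}_{j})_{m}\bigr|
\Biggr)=:B,
\label{eq:plan_B}
\end{align}
where $B$ is the bracketed quantity of~\eqref{eq:bound}.

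For the spectral side I would use a Dunford integral to project $\phi_{N}$ onto an eigenspace of $S_{p}^{\mu}$. First, existence and uniqueness of $\lambda$: since~\eqref{eq:local_2b} holds for every $N$ and $\lambda_{N}$ is the only element of $\sigma(S_{p,N}^{\mu})$ in $B_{\zeta}(9r)$, Theorem~\ref{thm:no-spurious-mode} forces every limit point of $\{\lambda_{N}\}\subset B_{\zeta}(r)$ to be an exact eigenvalue in $B_{\zeta}(r)$, while Theorem~\ref{thm:all_lambda_produced} forbids a second exact eigenvalue in $B_{\zeta}(9r)$ (it would have to be approximated by elements of $\sigma(S_{p,N}^{\mu})\setminus\{\lambda_{N}\}$, which the hypothesis excludes); hence there is a unique $\lambda\in\sigma(S_{p}^{\mu})\cap B_{\zeta}(r)$, with $\lambda_{N}\to\lambda$ and a gap of order $r$ to the rest of $\sigma(S_{p}^{\mu})$. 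I then take $\Gamma$ to be a circle about $\zeta$ of radius between $r$ and $9r$, enclosing exactly $\lambda$ and $\lambda_{N}$, and set $P=\frac{1}{2\pi\mathrm{i}}\oint_{\Gamma}(z-S_{p}^{\mu})^{-1}\,\mathrm{d}z$. From $(z-S_{p}^{\mu})\phi_{N}=(z-\lambda_{N})\phi_{N}-\rho_{N}$ I obtain the resolvent identity $(z-S_{p}^{\mu})^{-1}\phi_{N}=\bigl(\phi_{N}+(z-S_{p}^{\mu})^{-1}\rho_{N}\bigr)/(z-\lambda_{N})$, whose contour integral collapses to the clean relation $(\lambda-\lambda_{N})P\phi_{N}=P\rho_{N}$, and hence $|\lambda-\lambda_{N}|=\|P\rho_{N}\|/\|P\phi_{N}\|$. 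Estimating $\|P\rho_{N}\|\le\frac{1}{2\pi}\oint_{\Gamma}\|\rho_{N}\|/\mathrm{dist}(z,\sigma(S_{p}^{\mu}))\,|\mathrm{d}z|$ and bounding $\|P\phi_{N}\|$ from below through the same resolvent, while carrying the geometric factors $|z-\lambda_{N}|\le5r$ and $|z|\le|\zeta|+O(r)$ along $\Gamma$ in an $N$-uniform (non-asymptotic) way, I expect the explicit prefactor $5+3|\zeta|/r$ to emerge in front of $B$, giving exactly~\eqref{eq:bound}.

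The main obstacle is that the exact resolvent $(z-S_{p}^{\mu})^{-1}$ on $\Gamma$ is not known a priori, and the operator difference $S_{p}^{\mu}-S_{p,N}^{\mu}$ is \emph{unbounded}, so one cannot simply perturb the finite-dimensional projection. Identity~\eqref{eq:plan_res} is the device that resolves this: it lets every resolvent act on the single bounded vector $\rho_{N}$, so only $\|(z-S_{p}^{\mu})^{-1}\rho_{N}\|\le\|\rho_{N}\|/\mathrm{dist}(z,\sigma(S_{p}^{\mu}))$ is ever needed. The delicate point is then to turn the \emph{approximate} spectral gap in~\eqref{eq:local_2b} into a quantitative lower bound for $\mathrm{dist}(z,\sigma(S_{p}^{\mu}))$ along $\Gamma$ via Theorems~\ref{thm:no-spurious-mode} and~\ref{thm:all_lambda_produced}; careful tracking of the contour radius against the separations $r$ and $9r$ is what converts this into the clean constant $5+3|\zeta|/r$ rather than an unspecified one.
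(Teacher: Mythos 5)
Your residual identity $\rho_{N}=(S_{p}^{\mu}-\lambda_{N})\phi_{N}=(I-\hat{P}_{N})S_{p}^{\mu}\phi_{N}$ and the Fourier-space estimate of $\|\rho_{N}\|_{2}$ coincide with the paper's own final computation (note $\rho_{N}=(S_{p}^{\mu}-S_{p,N}^{\mu})\phi_{N}$), and your identity $(\lambda-\lambda_{N})P\phi_{N}=P\rho_{N}$ is a clean alternative to the paper's Lemma~\ref{eq:StrResolEstim_reverse}. The genuine gap is in the existence/uniqueness step, which is exactly the point to which the paper devotes half of its proof. From Theorems~\ref{thm:no-spurious-mode} and~\ref{thm:all_lambda_produced} together with \eqref{eq:local_2b} one gets only that (i) the accumulation points of $\{\lambda_{N}\}$ are exactly $\sigma(S_{p}^{\mu})\cap B_{\zeta}(r)$, and (ii) no exact spectrum lies in the open annulus between $B_{\zeta}(r)$ and $B_{\zeta}(9r)$. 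What does \emph{not} follow is that $\sigma(S_{p}^{\mu})\cap B_{\zeta}(r)$ is a singleton: your parenthetical argument (``it would have to be approximated by elements of $\sigma(S_{p,N}^{\mu})\setminus\{\lambda_{N}\}$'') applies only to candidate eigenvalues in the annulus, whereas two distinct exact eigenvalues inside $B_{\zeta}(r)$ could each be approximated by a subsequence of $\{\lambda_{N}\}$ itself, the sequence oscillating between them. This scenario also breaks your error bound: $P$ would then project onto the sum of two eigenspaces, so $S_{p}^{\mu}P\neq\lambda P$ and $(\lambda-\lambda_{N})P\phi_{N}=P\rho_{N}$ fails, while shrinking $\Gamma$ around one of the eigenvalues destroys the lower bound on $\mathrm{dist}(z,\sigma(S_{p}^{\mu}))$. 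The paper closes this gap by proving that $\{\lambda_{N}\}$ is Cauchy: it applies the projections $P_{\lambda_{N}}(S_{p,N}^{\mu})$ to a \emph{fixed exact} eigenvector $\phi$ and balances the growth $\|S_{p,N}^{\mu}\|_{2}=\mathrm{O}(N^{p+1/2})$ (Lemma~\ref{lem:norm_P_N_S}) against the super-polynomial decay $\|(I-\hat{P}_{N})\phi\|_{2}=\mathrm{O}(N^{-q})$ for every $q$ (Lemma~\ref{lem:ProjEstim_C_infty}, which rests on the $C^{\infty}$ regularity of eigenvectors). Tellingly, your proposal never invokes Assumption~\ref{assump:C_infty}, yet that assumption is precisely what makes the uniqueness step work.

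A lesser, repairable, deficiency is that you only ``expect'' the prefactor $5+3|\zeta|/r$ to emerge. In your scheme $|\lambda-\lambda_{N}|=\|P\rho_{N}\|_{2}/\|P\phi_{N}\|_{2}$ with $\|P\rho_{N}\|_{2}\leq\|\rho_{N}\|_{2}$ (orthogonality of $P$ by self-adjointness), but the denominator needs a quantitative lower bound: from $(I-P)\phi_{N}=-\frac{1}{2\pi\mathrm{i}}\oint_{\Gamma}(z-\lambda_{N})^{-1}(z-S_{p}^{\mu})^{-1}\rho_{N}\,\mathrm{d}z$ and a contour of radius $5r$ one gets $\|P\phi_{N}\|_{2}\geq 1-\frac{5}{16}\|\rho_{N}\|_{2}/r$, which is useful only when $\|\rho_{N}\|_{2}/r$ is small --- the analogue of hypothesis \eqref{eq:A-ANphi_less_than_1_reverse} in the paper's Lemma~\ref{eq:StrResolEstim_reverse}. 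You can make this unconditional by a case split: if $\|\rho_{N}\|_{2}\geq 2r/5$, the trivial bound $|\lambda-\lambda_{N}|\leq 2r\leq 5\|\rho_{N}\|_{2}$ suffices; otherwise $\|P\phi_{N}\|_{2}\geq 7/8$ and $|\lambda-\lambda_{N}|\leq\frac{8}{7}\|\rho_{N}\|_{2}$. Completed this way, your route in fact yields \eqref{eq:bound} with a constant independent of $|\zeta|$ (the factor $|\zeta|/r$ is an artifact of the paper's variational step, where $|\lambda|\leq r+|\zeta|$ enters through $\lambda=(S_{p}^{\mu}\phi,\phi)_{2}$), but none of this is in your write-up, so as submitted the prefactor is asserted rather than proved.
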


\section{Numerical Experiments}
\label{sec:NumComp}

In this section, we apply Hill's method described in Section~\ref{sec:HillsMethod} to Hill's operator \eqref{eq:HillEqSn}
of which the spectrum is exactly known,
and observe that computed results are consistent with Theorems~\ref{thm:main01},~\ref{thm:main03} 
and~\ref{thm:local_2b_4a} in Section~\ref{sec:DiffOpHill}. 

% In this section, using an operator whose spectrum can be exactly known, 
% we observe that the results of numerical computation of its spectrum by Hill's method
% are consistent with Theorems~\ref{thm:main01},~\ref{thm:main03} and~\ref{thm:local_2b_4a} 
% in Section~\ref{sec:DiffOpHill}. 

\subsection{Hill's Operator}

As an example for numerical experiments, we consider Hill's operator  \cite{bib:DeconinckKutz2006} defined by
\begin{align}
S_{2} = -\frac{\mathrm{d}^2}{\mathrm{d}x^2} + \{ 6 \ell^2\, \mathrm{sn}^2(x, \ell) -4 - \ell^2 \},
\label{eq:HillEqSn}
\end{align}
where $\mathrm{sn}(\cdot, \ell)$ is the Jacobian elliptic function with modulus $\ell\ (0\leq \ell < 1)$. 
It is known \cite{bib:DeconinckKutz2006} that the spectrum $\sigma(S_{2})$ is exactly given by 
\begin{align}
\sigma(S_{2}) = [\sigma_{\mathrm{a}}(\ell),\, -3] \cup [\sigma_{\mathrm{b}}(\ell),\, 0] \cup [\sigma_{\mathrm{c}}(\ell),\, +\infty),
\label{eq:exact_spec_S_2}
\end{align}
where
\begin{align}
& \text{(a)}\quad \sigma_{\mathrm{a}}(\ell) = \ell^2 - 2 - 2 \sqrt{1 - \ell^2 + \ell^4}, \notag \\
& \text{(b)}\quad \sigma_{\mathrm{b}}(\ell) = -3(1-\ell^2), \label{eq:eigen_abc}\\
& \text{(c)}\quad \sigma_{\mathrm{c}}(\ell) = \ell^2 - 2 + 2 \sqrt{1 - \ell^2 + \ell^4}. \notag
\end{align}
Figure~\ref{fig:HillEqSnSpectra} illustrates $\sigma(S_{2})$ for all $\ell$ with $0\leq \ell < 1$.

By definition~\eqref{eq:Expr_S_p_mu} of $S_p^{\mu}$, $p=2$ and $S_{2}^{\mu}$ is expressed as
\begin{align}
S_{2}^{\mu} 
& = - \frac{\mathrm{d^2}}{\mathrm{d}x^2} 
+ f_{1}(x) \frac{\mathrm{d}}{\mathrm{d}x} 
+ f_{0}(x)
\label{eq:S2mu}
\end{align}
with 
\begin{align}
f_{1}(x) = - 2 \mathrm{i} \mu \quad \text{and}\quad  
f_{0}(x) = \{ 6 \ell^2\, \mathrm{sn}^2(x, \ell) -4 - \ell^2 \} + \mu^2.
\end{align}
Although the sign of the highest order derivative 
$\mathrm{d}^2 / \mathrm{d}x^2$ in~\eqref{eq:S2mu}
is different from that in~\eqref{eq:Def_S_p_mu}, 
we can directly apply the results in Section~\ref{sec:DiffOpHill}
to $S_{2}^{\mu}$.
Note that $f_{0}$ is periodic with period $2K(\ell)$, 
where $K(\ell)$ is the complete elliptic integral of the first kind with modulus $\ell$.
Since $S_{2}^{\mu}$ is self-adjoint and $f_{0}$ is analytic with 
$d = K\left(\sqrt{1-\ell^2}\right) $ in \eqref{eq:DefD_d}, 
Assumptions 1 and 2b are satisfied. 

\begin{figure}[t]
\begin{center}
\begin{minipage}{0.7\linewidth}
\begin{center}
\includegraphics[width=0.7\linewidth]{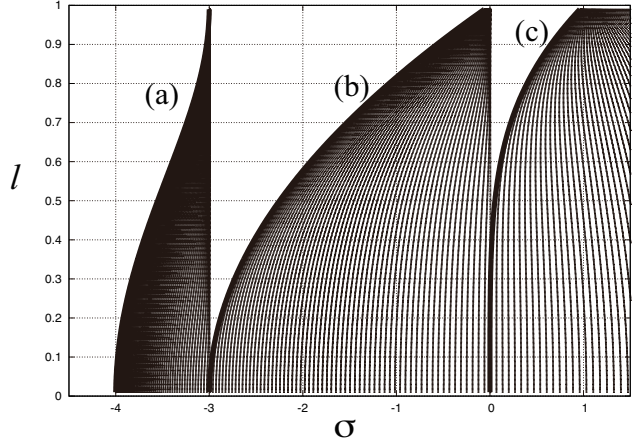}
\caption{Computed results of $( \sigma(S_{2}), \ell)$ for $S_{2}$ in \eqref{eq:HillEqSn} %\protect \linebreak
(a) $\sigma_{\mathrm{a}}(\ell) = \ell^2 - 2 - 2 \sqrt{1 - \ell^2 + \ell^4}$, 
(b) $\sigma_{\mathrm{b}}(\ell) = -3(1-\ell^2)$ and %\protect \linebreak
(c) $\sigma_{\mathrm{c}}(\ell) = \ell^2 - 2 + 2 \sqrt{1 - \ell^2 + \ell^4}$. }
\label{fig:HillEqSnSpectra}
\end{center}
\end{minipage}
\end{center}
\end{figure}

For later convenience, expand the periodic term $f_{0}(x) - \mu^{2}$ in the Fourier series form
\begin{align*}
f_{0}(x) - \mu^{2}
= b_{0} + \sum_{j = 1}^{\infty} b_{j} 
\left\{ \exp \left( \mathrm{i} \frac{\pi j x}{K(\ell)} \right) + \exp \left( - \mathrm{i} \frac{\pi j x}{K(\ell)} \right) \right\}, 
\end{align*}
where 
\begin{align}
b_{0}(\ell) = 6\left( 1 - \frac{E(\ell)}{K(\ell)} \right) -4 - \ell^2, \quad 
b_{j}(\ell) = - \frac{6\pi^2}{K(\ell)^2} \frac{j\, q(\ell)^j}{1-q(\ell)^{2j}}\quad (j = 1,2,\ldots), \label{eq:AN_02}
\end{align}
$E(\ell)$ is the complete elliptic integral of the second kind with modulus $\ell$, and 
\begin{align}
q(\ell) = \exp\left( -\pi \frac{K\left(\sqrt{1 - \ell^2}\right)}{K(\ell)} \right).\label{eq:AN_03}
\end{align}
Then, for $L = 2MK(\ell)$ with positive integer $M$, 
the eigenvalue problem $S_{2, N}^{\mu} \phi = \lambda \phi$ for $\phi \in L_{2}([0, L])_{\mathrm{per}}$
can be written in the form
\begin{align}
\hat{D}_{n}^{\mu}(\ell)\ (\hat{\phi}_{N})_{n} + 
\sum_{j = 1}^{N} b_{j}(\ell) \left( (\hat{\phi}_{N})_{n - M j} + (\hat{\phi}_{N})_{n + M j} \right) 
= \lambda (\hat{\phi}_{N})_{n} \quad
(-N \leq n \leq N)
\label{eq:S2mu_matrix}
\end{align}
with
\begin{align}
\hat{D}_{n}^{\mu}(\ell) = \left(\mu + \frac{\pi n}{MK(\ell)} \right)^2 + b_{0}(\ell)
\quad (n = -N, \ldots, N).
\label{eq:AN_01} 
\end{align}
Equation~\eqref{eq:S2mu_matrix} is a matrix eigenvalue problem of $2N+1$ dimension. 

For given $M$ and $N$, we can compute the eigenvalues of $S_{2, N}^{\mu}$ using Hill's method. 
Figure~\ref{fig:Error_a} shows the absolute errors of the computed eigenvalues 
corresponding to (a) in \eqref{eq:eigen_abc} for $M=2$ and $N=5, 10, \ldots, 40$.  
Here it should be noted that the boundaries (a), (b) and (c) in~\eqref{eq:eigen_abc} 
are obtained as the eigenvalues of $S_{p}^{\mu}$ in~\eqref{eq:S2mu} for $\mu = 0$. 
All programs used in this paper 
are written in C with quadruple-precision floating-point arithmetic 
and executed on a computer with the Sparc processor and the Fujitsu compiler C99.

\subsection{The Convergence Rate}
\label{subsec:NumConvRate}

% First, note that 
% the spectrum $\sigma(S_{2})$ is exactly known as~\eqref{eq:exact_spec_S_2}, 
% and the boundaries (a), (b) and (c) in~\eqref{eq:eigen_abc} 
% are obtained as the eigenvalues of $S_{2}^{0}$ in \eqref{eq:S2mu}. 

The computed eigenvalues of $S_{2, N}^{0}$
approach to the exact ones corresponding to them 
with increase of $N$ as shown in Figure~\ref{fig:Error_a}.
Thus the conditions~\eqref{eq:3a_in} and~\eqref{eq:3a_out} in 
Assumption~\ref{assump:approx_cond_single} are satisfied 
for sufficiently large $N$, 
and the convergence rate~\eqref{eq:main_02}
in Theorem~\ref{thm:main01}
can be applied to the computed eigenvalues $\lambda_{N}$'s. 
In fact, the exponential decay of the error with $N$ in \eqref{eq:main_02}
can be found in Figure~\ref{fig:Error_a}.
% This fact is consistent with the numerical results 
% for the eigenvalues (a) shown by Figure~\ref{fig:Error_a}.
Note that 
similar results to Figure~\ref{fig:Error_a} are obtained for 
the eigenvalues (b) and (c). 
From these, 
we can say that the computed results are consistent with 
\eqref{eq:main_02} in Theorem~\ref{thm:main01}.

For small values of $\ell$, 
% suppose that the exact values in the spectrum $\sigma(S_{2})$, 
% in particular, the exact eigenvalues of $S_{2}^{0}$ are unknown. 
%In this case, using Theorem~\ref{thm:Gershgorin} (Gershgorin's theorem), 
we can show that the convergence rate~\eqref{eq:main_02} holds, 
even if the exact eigenvalues of $S_{2}^{0}$ are unknown, 
as follows. 
For that, 
first, consider~\eqref{eq:local_2b} which is the sufficient condition for~\eqref{eq:3a_in} and~\eqref{eq:3a_out} 
in Assumption~\ref{assump:approx_cond_single}. 
We can check~\eqref{eq:local_2b} by applying
Theorem~\ref{thm:Gershgorin} (Gershgorin's theorem) to the coefficient matrix 
in the left hand side of \eqref{eq:S2mu_matrix}, 
of which the diagonal elements are given by
$\hat{D}_{n}^{\mu}(\ell)$ in~\eqref{eq:AN_01} and 
the eigenvalues are real. 
For this matrix, 
Theorem~\ref{thm:Gershgorin} produces
% \begin{align*}
% \sigma(S_{2,N}^{\mu}) 
% \subset 
% \bigcup_{n=-N}^{N}
% \left\{ 
% z \in \mathbf{R}\, \left| \, | z - \hat{D}_n^{\mu}(\ell) | \leq r_{n}(\ell) \right.
% \right\}, 
% \end{align*}
% where $r_{n}(\ell)$ is bounded by
% \begin{align*}
% r_{n}(\ell) 
% & \leq
% 2 \sum_{j = 1}^{N} |b_j(\ell)| 
% \leq \frac{12\pi^2}{K(\ell)^2} \frac{q(\ell)}{(1-q(\ell)^2)(1-q(\ell))^2}. 
% \end{align*}
% Since 
% \begin{align*}
% \lim_{N \to \infty}
% \sum_{j=1}^{N} \frac{j\, q(\ell)^{j}}{1 - q(\ell)^2} 
% = 
% \end{align*}
%Hence we can deduce that
\begin{align}
\sigma(S_{2,N}^{\mu}) 
\subset 
\bigcup_{n=-N}^{N}
I_n^{\mu}(\ell)
\label{eq:inclusion}
\end{align}
where
\begin{align}
& I_n^{\mu}(\ell)
= \left\{ 
z \in \mathbf{R}\, \left| \, | z - \hat{D}_n^{\mu}(\ell) | \leq r(\ell) \right.
\right\} \label{eq:inclusion_interval} \\
\intertext{with}
& r(\ell) = \frac{12 \pi^2}{K(\ell)^2} \frac{q(\ell)}{(1-q(\ell)^2)(1-q(\ell))^2}.
\label{eq:inclusion_interval_rad}
\end{align}
We can compute the center $\hat{D}_n^{\mu}(\ell)$ in~\eqref{eq:AN_01} 
and the radius $r(\ell)$ in \eqref{eq:inclusion_interval_rad} of 
the interval $I_n^{\mu}(\ell)$ which includes some eigenvalues in $\sigma(S_{2,N}^{\mu}) $. 
For example, when $\ell = 0.1$ and $\mu = 0$, we have 
\begin{align*}
r(0.1) = 0.030\cdots,
\end{align*}
and 
\begin{align*}
& \hat{D}_{0}^{0}(0.1) = -3.9799\cdots,\quad 
\hat{D}_{\pm 1}^{0}(0.1) = -2.9849\cdots, \quad \\
& \hat{D}_{\pm 2}^{0}(0.1) \doteqdot 0, \quad 
\hat{D}_{\pm 3}^{0}(0.1) = 4.9749\cdots.
\end{align*}
Then 
the intervals $I_{n}^{0}(0.1)\ (n=0,1,2,3)$ are mutually disjoint and 
the distances between their centers are larger than $10\, r(0.1)$. 
Moreover, from Theorem~\ref{thm:Gershgorin}, we can say that
$I_{0}^{0}(0.1)$ contains just one eigenvalue of $S_{2, N}^{\mu}$. 
Note that $\zeta$ and $r$ in~\eqref{eq:local_2b} correspond to $\hat{D}_{0}^{0}(0.1)$ and $r(0.1)$, 
respectively. 
Therefore the condition~\eqref{eq:local_2b} is satisfied for the interval $I_{0}^{0}(0.1)$. 
From these, 
the conditions~\eqref{eq:3a_in} and~\eqref{eq:3a_out}  
in Assumption~\ref{assump:approx_cond_single} are satisfied, and the convergence rate~\eqref{eq:main_02} holds for $I_{0}^{0}(0.1)$.
% Since $\sigma_{\mathrm{a}}$ in~\eqref{eq:eigen_abc} corresponds to $\mu = 0$, 
% this convergence rate is consistent with that in Figure~\ref{fig:Error_a}.

Also from Theorem~\ref{thm:Gershgorin}, we can show that 
$I_{1}^{0}(0.1)$ and $I_{2}^{0}(0.1)$ contain just two eigenvalues of $S_{2, N}^{\mu}$, 
respectively. 
Then the conditions~\eqref{eq:3b_in} and~\eqref{eq:3b_out} in Assumption~\ref{assump:approx_cond_plural} may be satisfied, and 
it was found that the numerical results are consistent with the convergence rate \eqref{eq:main_04}.
For further study on this case of Assumption~\ref{assump:approx_cond_plural}, 
we have to develop the verification method for the conditions~\eqref{eq:3b_in} and~\eqref{eq:3b_out}.
%But the verification method for the conditions~\eqref{eq:3b_in} and~\eqref{eq:3b_out} remains as a future work. 

\subsection{The Error Bound}
\label{subsec:NumErrorBound}

% First, as noted in Section~\ref{subsec:NumConvRate},
% the spectrum $\sigma(S_{2})$ and 
% the eigenvalues of $S_{2}^{0}$ are exactly known,  
% and the computed eigenvalues of $S_{2, N}^{0}$
% are sufficiently near to the exact ones corresponding to them 
% for $N=5, 10, \ldots, 40$. 
Since the computed eigenvalues are accurate enough 
for sufficiently large $N$, as shown in Figure~\ref{fig:Error_a}, 
the condition~\eqref{eq:local_2b} in Theorem~\ref{thm:local_2b_4a}
is satisfied for sufficiently large $N$.
% From this observation, 
% it can be said that 
% the condition~\eqref{eq:local_2b} in Theorem~\ref{thm:local_2b_4a}
% is satisfied for $N=5, 10, \ldots, 40$, 
% the eigenvales $\lambda$'s and $\lambda_{N}$'s 
% corresponding to (a)
% and some sufficiently small radii $r$'s. 
% Therefore the error bound~\eqref{eq:bound} 
% can be applied to the computed eigenvalues $\lambda_{N}$'s 
% corresponding to (a). 
Then we can estimate the error bound~\eqref{eq:bound} in Theorem~\ref{thm:local_2b_4a} as follows. 
First, $\zeta$, $r$, $L$ and $(\hat{f}_{0})_{m}$ in~\eqref{eq:bound} are given by, respectively, 
\begin{align}
& \zeta = \hat{D}_{0}^{0}(\ell), \ 
r = r(\ell),\ 
L = 4K(\ell),\ \text{and}\ 
(\hat{f}_{0})_{m} = 
\begin{cases}
0 & (m \text{ is odd}), \\
\sqrt{L}\, b_{m/2} & (m \text{ is even}),
\end{cases}
\label{eq:boundparam}
\end{align}
for $|m| \geq 1$. Next, the infinite sum in~\eqref{eq:bound} can be bounded as 
\begin{align}
\sum_{|m| \geq N} \left| (\hat{f}_{0})_{m} \right|
& \leq 
2 \sqrt{L} \sum_{j = \lceil N/2 \rceil}^{\infty} | b_{j} |
\leq 
\frac{24\pi^{2}}{K(\ell)^{3/2}} \sum_{j = \lceil N/2 \rceil}^{\infty} \frac{j\, q(\ell)^{j}}{1- q(\ell)^2} \notag \\
& = 
\frac{24\pi^{2}}{K(\ell)^{3/2}} \frac{(\lceil N/2 \rceil (1-q(\ell)) + q(\ell))\, q(\ell)^{\lceil N/2 \rceil}}{(1-q(\ell)^{2}) (1-q(\ell))^{2}}. 
\label{eq:boundbound}
\end{align}
Then it follows from the inequality~\eqref{eq:bound} that 
\begin{align}
| \lambda - \lambda_{N} | 
\leq &
\left(
5 + \frac{3 |\zeta|}{r}
\right)
\frac{(2\pi N)^{p}}{L^{p+1/2}}
\left(
\sum_{N < |l| < 2N} 
\sum_{m = l - N}^{l + N} 
\left| (\hat{f}_{0})_{m} (\hat{\phi}_{N})_{l-m} \right|
\right. \notag \\
& +
\left.
(2N+1)
\frac{24\pi^{2}}{K(\ell)^{3/2}} \frac{(\lceil N/2 \rceil (1-q(\ell)) + q(\ell))\, q(\ell)^{\lceil N/2 \rceil}}{(1-q(\ell)^{2}) (1-q(\ell))^{2}} 
\right), 
\label{eq:bound_mdfd}
\end{align}
where $\zeta$, $r$, $L$ and $(\hat{f}_{0})_{m}$ are given by \eqref{eq:boundparam}. 
The computed results of the right hand side of \eqref{eq:bound_mdfd}
are shown in Figure~\ref{fig:Bound_a}. 
% According to this fact, 
% we computed the right hand side of \eqref{eq:bound_mdfd}
% and show the results by Figure~\ref{fig:Bound_a}. 
The absolute errors in Figure~\ref{fig:Error_a} are bounded by 
the corresponding error bounds in Figure~\ref{fig:Bound_a}. 
Thus the numerical results are consistent with
Theorem~\ref{thm:local_2b_4a}.

For small values of $\ell$, 
we can obtain the explicit error bound~\eqref{eq:bound_mdfd}, 
even if the exact eigenvalues of $S_{2}^{0}$ are unknown.
% suppose that the exact values in the spectrum $\sigma(S_{2})$ are unknown. 
% In this case, again using Theorem~\ref{thm:Gershgorin} (Gershgorin's theorem), 
% we can also deduce 
% the error bound~\eqref{eq:bound} 
% which is consistent with Figure~\ref{fig:Error_a} and~\ref{fig:Bound_a}
% for the small modulus $\ell$. 
% In the previous section \ref{subsec:NumConvRate}, 
% it is shown that, 
For example, when $\ell =0.1$ and $\mu = 0$, 
the interval $I_{0}^{0}(0.1)$ in \eqref{eq:inclusion_interval}
satisfies the condition~\eqref{eq:local_2b} as shown in Section \ref{subsec:NumConvRate}. 
Then we can conclude that $I_{0}^{0}(0.1)$ contains one exact eigenvalue of $S_{2}^{0}$, 
and the corresponding error is bounded by~\eqref{eq:bound_mdfd}. 
Figure~\ref{fig:Bound_a} shows that, when $\ell =0.1$, the error for $N  \geq 10$ is less than $2.37\times 10^{-8}$.

\begin{figure}
\begin{center}
\begin{minipage}[t]{0.45\linewidth}
\begin{center}
\includegraphics[width=\linewidth]{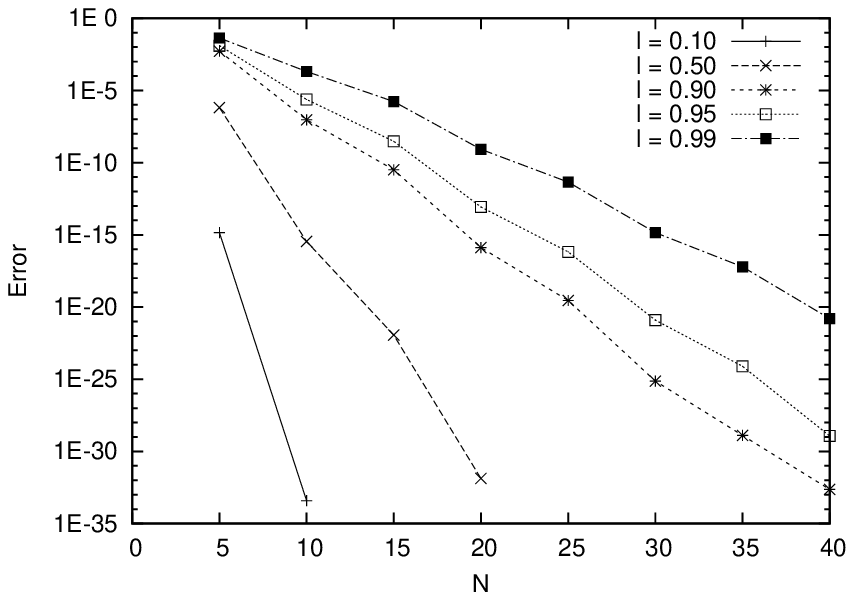}
\caption{The absolute errors of the approximate eigenvalues of $\sigma(S_{2,N}^{0})$ corresponding to the eigenvalues (a).}
\label{fig:Error_a}
\end{center}
\end{minipage}\quad 
\begin{minipage}[t]{0.45\linewidth}
\begin{center}
\includegraphics[width=\linewidth]{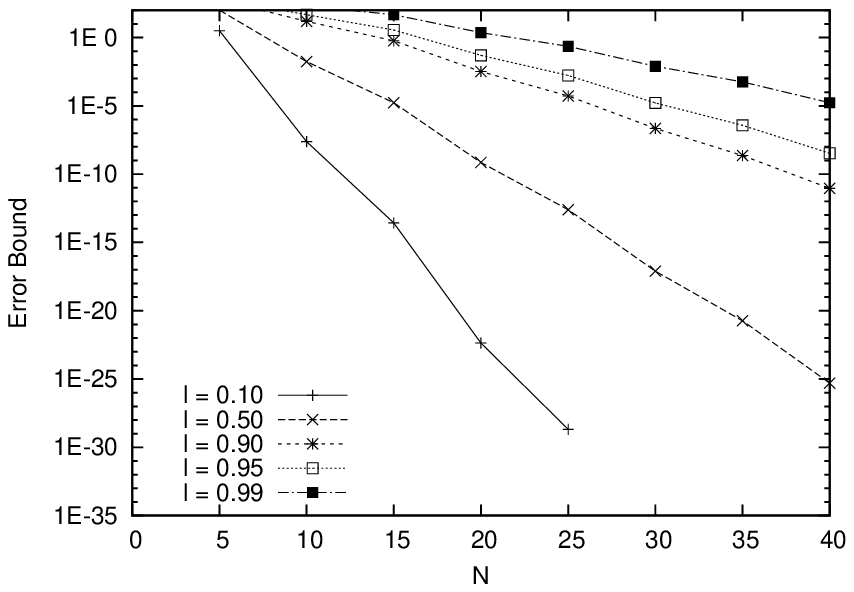}
\caption{The error bounds \eqref{eq:bound_mdfd} of the approximate eigenvalues of $\sigma(S_{2,N}^{0})$ corresponding to the eigenvalues (a).}
\label{fig:Bound_a}
\end{center}
\end{minipage}
\end{center}
\end{figure}

\section{Proofs}
\label{sec:GenConv}

This section summarizes proofs of 
the theorems in Sections~\ref{subsec:HillConvRate} and~\ref{subsec:localization}. 
Let $(\cdot\, ,\, \cdot)_{2}$ and $\| \cdot \|_{2}$ be the inner product of 
$L_2([0,L])_{\mathrm{per}}$ and the norm induced by it, respectively.
We also use $\| \cdot \|_{2}$ as the operator norm of linear operators on 
$L_2([0,L])_{\mathrm{per}}$ with respect to the norm $\| \cdot \|_{2}$. 

\subsection{Proof of Theorem~\ref{thm:main01}}
\label{subsec:main01_02}

To prove the theorem, 
we use an appropriate eigenvector $\phi_{N}$ of the approximate operator $S_{p, N}^{\mu}$ 
generated by an eigenvector $\phi$ of $S_{p}^{\mu}$. 
On Assumption~\ref{assump:approx_cond_single}, 
we can employ the projection operator $P_{\lambda}(S_{p, N}^{\mu})$ 
to the eigenspace corresponding to $\lambda_{N} \in \sigma(S_{p, N}^{\mu})$
defined by 
\begin{align}
 P_{\lambda}(S_{p, N}^{\mu}) = 
\frac{1}{2\pi \mathrm{i}} \oint_{\varGamma_{\lambda}(S_{p}^{\mu})} R_{\nu}(S_{p, N}^{\mu})\, \mathrm{d} \nu, 
\label{eq:DunfordIntP}
\end{align}
where $R_{\nu}(S_{p, N}^{\mu}) = (\nu I - S_{p, N}^{\mu})^{-1}$ and
$\varGamma_{\lambda}(S_{p}^{\mu})$
is the directed circle with center $\lambda$ and radius $r_{\lambda}$ 
shown by Figure~\ref{fig:eigenvalues01}. 
The integral in \eqref{eq:DunfordIntP} is the Dunford integral. 
See e.g.~Theorem XII.5 in \cite{bib:ReedSimonMMMP-IV}. 
For later use, it should be noted that 
\begin{align}
l(\varGamma_{\lambda}(S_{p}^{\mu})) = 2 \pi r_{\lambda}, 
\label{eq:contour_estimate_1}
\end{align}
where $l(\varGamma_{\lambda}(S_{p}^{\mu}))$ is the length of $\varGamma_{\lambda}(S_{p}^{\mu})$, 
and
\begin{align}
&\qquad |\nu - \lambda| = r_{\lambda}, \label{eq:contour_estimate_2}\\ 
& r_{\lambda}/2 \leq |\nu - \lambda_N | \leq 3r_{\lambda}/2. \label{eq:contour_estimate_3}
\end{align}
for any $\nu \in \varGamma_{\lambda}(S_{p}^{\mu})$. 

% Let $T: D(T) \subset L_{2}([0, L])_{\mathrm{per}} \to  L_{2}([0, L])_{\mathrm{per}}$ 
% be a densely defined linear operator whose spectra $\sigma(T)$ is a discrete set without accumulation points.
% Then for each $\lambda \in \sigma(T)$ there exists some simply connected domain $D_{\lambda}$ on $\mathbf{C}$
% with $\sigma(T)\cap (\mathrm{int}\, D_{\lambda}) = \{ \lambda \}$, 
% where $\mathrm{int}\, D_{\lambda}$ denotes the interior of $D_{\lambda}$. 
% Letting $C_{\lambda}$ be the boundary of $D_{\lambda}$ with the counterclockwise direction, 
% we define the operator $P_{\lambda}(T)$ as
% \begin{align}
% P_{\lambda}(T) = \frac{1}{2\pi \mathrm{i}} \oint_{C_{\lambda}} R_{\nu}(T)\, \mathrm{d} \nu, 
% \label{eq:DunfordIntP}
% \end{align}

% Since $R_{\nu}(T)$ is analytic on 
% $D_{\lambda} \setminus \{ \lambda \}$ as a function of $\nu$, 
% the integral in \eqref{eq:DunfordIntP}, the so-called Dunford integral, 
% determines the operator $P_{\lambda}(T): L_{2}([0, L])_{\mathrm{per}} \to  L_{2}([0, L])_{\mathrm{per}}$
% independently of $D_{\lambda}$.
% When $T$ is self-adjoint, 
% $P_{\lambda}(T)$ is the projection to the eigenspace corresponding to $\lambda$. 

Then we take an eigenvector $\phi$ with $\| \phi \|_{2} = 1$ corresponding to $\lambda$, 
and using $\phi$ and the projection $P_{\lambda_{N}}(S_{p, N}^{\mu})$ for $\lambda_{N} \in \sigma(S_{p, N}^{\mu})$, 
we can obtain an eigenvector $\phi_{N}$ with $\| \phi_{N} \|_{2} = 1$ corresponding to $\lambda_{N}$. 
The feasibility of such procedure is guaranteed by the following lemma, 
which is based on the idea in \cite{bib:AtkinsonEigen1967}. 

\begin{lem}
\label{lem:make_approx_eigenvec_2a}
Let $S_{p}^{\mu}$ satisfy 
Assumptions~\ref{assump:self-adjoint} and~\ref{assump:C_infty}, 
$\lambda \in \sigma(S_{p}^{\mu})$ and $\lambda_{N} \in \sigma(S_{p, N}^{\mu})$
satisfy Assumption~\ref{assump:approx_cond_single}. 
Then for an eigenvector $\phi$ 
corresponding to $\lambda$ we have
\begin{align}
\| \phi - P_{\lambda_N}(S_{p, N}^{\mu}) \phi \|_{2}  
\leq 
\frac{2}{r_{\lambda}} \| (S_{p}^{\mu} - S_{p, N}^{\mu}) \phi \|_{2}. 
\label{eq:EstimByResol_2}
\end{align}
Furthermore, if $\| \phi \|_{2} = 1$ and 
there exists a positive integer $N_{0}$ such that for any $N > N_{0}$ 
\begin{align}
\frac{2}{r_{\lambda}} \| (S_{p}^{\mu} - S_{p, N}^{\mu}) \phi \|_{2} < 1, 
\label{eq:less_than_one}
\end{align}
we have $P_{\lambda_{N}}(S_{p, N}^{\mu}) \phi \neq 0$ for $N > N_{0}$.
\end{lem}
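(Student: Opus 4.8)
The plan is to compare the approximate spectral projection $P_{\lambda_{N}}(S_{p,N}^{\mu})$ with the exact spectral projection of $S_{p}^{\mu}$ attached to the \emph{same} contour $\varGamma_{\lambda}(S_{p}^{\mu})$, and to express their difference through the resolvent identity so that the eigenvalue relation $S_{p}^{\mu}\phi = \lambda\phi$ can be used. First I would introduce
\begin{align}
P_{\lambda}(S_{p}^{\mu}) = \frac{1}{2\pi\mathrm{i}}\oint_{\varGamma_{\lambda}(S_{p}^{\mu})} R_{\nu}(S_{p}^{\mu})\,\mathrm{d}\nu .
\end{align}
By the separation condition~\eqref{eq:3a_out}, the circle $\varGamma_{\lambda}(S_{p}^{\mu})$ encloses $\lambda$ but no other point of $\sigma(S_{p}^{\mu})$, and since $R_{\nu}(S_{p}^{\mu})\phi = (\nu-\lambda)^{-1}\phi$ on the contour, the residue theorem yields $P_{\lambda}(S_{p}^{\mu})\phi = \phi$. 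Consequently
\begin{align}
\phi - P_{\lambda_{N}}(S_{p,N}^{\mu})\phi = \frac{1}{2\pi\mathrm{i}}\oint_{\varGamma_{\lambda}(S_{p}^{\mu})} \bigl( R_{\nu}(S_{p}^{\mu}) - R_{\nu}(S_{p,N}^{\mu}) \bigr)\phi\,\mathrm{d}\nu .
\end{align}

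Next I would insert the resolvent identity in the order
\begin{align}
R_{\nu}(S_{p}^{\mu}) - R_{\nu}(S_{p,N}^{\mu}) = R_{\nu}(S_{p,N}^{\mu})\,(S_{p}^{\mu} - S_{p,N}^{\mu})\,R_{\nu}(S_{p}^{\mu}),
\end{align}
chosen precisely so that, applied to $\phi$, the rightmost factor collapses to $(\nu-\lambda)^{-1}\phi$ and the integrand becomes $(\nu-\lambda)^{-1}R_{\nu}(S_{p,N}^{\mu})(S_{p}^{\mu}-S_{p,N}^{\mu})\phi$. Taking norms under the integral, I would then invoke the three facts recorded before the lemma: the value $|\nu-\lambda| = r_{\lambda}$ from~\eqref{eq:contour_estimate_2}, the contour length $l(\varGamma_{\lambda}(S_{p}^{\mu})) = 2\pi r_{\lambda}$ from~\eqref{eq:contour_estimate_1}, and a uniform bound on $\| R_{\nu}(S_{p,N}^{\mu}) \|_{2}$.

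The main work is the resolvent bound, and this is where self-adjointness is essential. Because $S_{p,N}^{\mu}$ is self-adjoint under Assumption~\ref{assump:self-adjoint}, one has $\| R_{\nu}(S_{p,N}^{\mu}) \|_{2} = 1/\mathrm{dist}(\nu,\sigma(S_{p,N}^{\mu}))$, so it suffices to bound that distance below on the contour: the eigenvalue $\lambda_{N}$ satisfies $|\nu-\lambda_{N}|\geq r_{\lambda}/2$ by~\eqref{eq:contour_estimate_3}, while any other $\lambda_{N}'\in\sigma(S_{p,N}^{\mu})$ satisfies $|\lambda-\lambda_{N}'|\geq 2r_{\lambda}$ by~\eqref{eq:3a_out}, hence $|\nu-\lambda_{N}'|\geq 2r_{\lambda}-r_{\lambda}=r_{\lambda}$. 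Thus $\mathrm{dist}(\nu,\sigma(S_{p,N}^{\mu}))\geq r_{\lambda}/2$ and $\| R_{\nu}(S_{p,N}^{\mu}) \|_{2}\leq 2/r_{\lambda}$. Combining the three facts gives
\begin{align}
\| \phi - P_{\lambda_{N}}(S_{p,N}^{\mu})\phi \|_{2} \leq \frac{1}{2\pi}\cdot\frac{1}{r_{\lambda}}\cdot\frac{2}{r_{\lambda}}\cdot 2\pi r_{\lambda}\cdot\| (S_{p}^{\mu}-S_{p,N}^{\mu})\phi \|_{2} = \frac{2}{r_{\lambda}}\| (S_{p}^{\mu}-S_{p,N}^{\mu})\phi \|_{2},
\end{align}
which is~\eqref{eq:EstimByResol_2}. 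The one subtlety to watch is that $\phi$ lives in the full space $L_{2}([0,L])_{\mathrm{per}}$ whereas $S_{p,N}^{\mu}$ acts through $\hat{P}_{N}$, so the component of $(S_{p}^{\mu}-S_{p,N}^{\mu})\phi$ lying outside $\mathrm{Ran}(\hat{P}_{N})$ must be checked to obey the same constant; splitting the integrand orthogonally along $\mathrm{Ran}(\hat{P}_{N})$ and its complement (on the latter the residue contribution either vanishes or reduces to $(I-\hat{P}_{N})\phi$, which is already dominated by $\| (S_{p}^{\mu}-S_{p,N}^{\mu})\phi \|_{2}$) shows the estimate survives unchanged.

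Finally, the non-vanishing assertion is immediate. Under~\eqref{eq:less_than_one} the right-hand side of~\eqref{eq:EstimByResol_2} is strictly smaller than $1 = \| \phi \|_{2}$, so the reverse triangle inequality gives $\| P_{\lambda_{N}}(S_{p,N}^{\mu})\phi \|_{2} \geq \| \phi \|_{2} - \| \phi - P_{\lambda_{N}}(S_{p,N}^{\mu})\phi \|_{2} > 0$, whence $P_{\lambda_{N}}(S_{p,N}^{\mu})\phi \neq 0$ for all $N > N_{0}$.
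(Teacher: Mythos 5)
Your proof is correct and follows essentially the same route as the paper's: the same contour $\varGamma_{\lambda}(S_{p}^{\mu})$, the identity $P_{\lambda}(S_{p}^{\mu})\phi=\phi$ via the residue theorem, the resolvent identity applied so that $R_{\nu}(S_{p}^{\mu})\phi$ collapses to $(\nu-\lambda)^{-1}\phi$, and the self-adjointness bound $\| R_{\nu}(S_{p,N}^{\mu}) \|_{2}=\mathrm{dist}(\nu,\sigma(S_{p,N}^{\mu}))^{-1}\leq 2/r_{\lambda}$, followed by the same reverse-triangle-inequality conclusion. The only divergence is your closing paragraph about components outside $\mathrm{Ran}(\hat{P}_{N})$, a worry the paper does not raise (the separation in Assumption 3a already keeps all of $\sigma(S_{p,N}^{\mu})$ at distance at least $r_{\lambda}/2$ from the contour, which is all the operator-norm bound requires), so that paragraph can simply be dropped.
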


\begin{proof}
Under Assumption 2a, 
we can use the contour $\varGamma_{\lambda}(S_{p}^{\mu})$ to define $P_{\lambda}(S_{p}^{\mu})$ and $P_{\lambda_{N}}(S_{p, N}^{\mu})$. 
By the residue theorem, we have $P_{\lambda}(S_{p}^{\mu}) \phi = \phi$. 
Then it follows from \eqref{eq:contour_estimate_1} that 
\begin{align}
 \| \phi - P_{\lambda_N}(S_{p, N}^{\mu}) \phi \|_{2} 
& =  \| P_{\lambda}(S_{p}^{\mu}) \phi - P_{\lambda_N}(S_{p, N}^{\mu}) \phi \|_{2} \notag \\
& =  \left \|  \frac{1}{2\pi \mathrm{i}} 
\oint_{\varGamma_{\lambda}(S_{p}^{\mu})} (R_{\nu}(S_{p}^{\mu})- R_{\nu}(S_{p, N}^{\mu}))\phi\,  \mathrm{d}\nu \right \|_{2} \notag \\
& \leq r_{\lambda} \max_{\nu \in \varGamma_{\lambda}(S_{p}^{\mu})} \| (R_{\nu}(S_{p}^{\mu})- R_{\nu}(S_{p, N}^{\mu})) \phi \|_{2}.
\label{eq:EstimByResol}
\end{align}
As for $\| (R_{\nu}(S_{p}^{\mu})- R_{\nu}(S_{p, N}^{\mu})) \phi \|_{2}$ in \eqref{eq:EstimByResol}, we have
\begin{align}
\| (R_{\nu}(S_{p}^{\mu})- R_{\nu}(S_{p, N}^{\mu})) \phi \|_{2}
& = \| (\nu I - S_{p, N}^{\mu})^{-1} (S_{p}^{\mu} - S_{p, N}^{\mu}) (\nu I - S_{p}^{\mu})^{-1} \phi \|_{2} \notag \\
& = \| (\nu I - S_{p, N}^{\mu})^{-1} (S_{p}^{\mu} - S_{p, N}^{\mu}) (\nu - \lambda)^{-1} \phi \|_{2} \notag \\
& \leq | \nu - \lambda |^{-1} \, \| (\nu I - S_{p, N}^{\mu})^{-1} \|_{2}\,  \| (S_{p}^{\mu} - S_{p, N}^{\mu}) \phi \|_{2} \notag \\
& =  | \nu - \lambda |^{-1} \, \mathrm{dist}(\nu, \sigma(S_{p, N}^{\mu}))^{-1}\, \| (S_{p}^{\mu} - S_{p, N}^{\mu}) \phi \|_{2}, 
\label{eq:ResolCalc}
\end{align}
where $\mathrm{dist}(\nu, \sigma(S_{p, N}^{\mu}))$ is 
the distance between $\nu \in \mathbf{C}$ and $\sigma (S_{p, N}^{\mu})$ defined as
\[
\mathrm{dist}(\nu, \sigma(S_{p, N}^{\mu})) = \displaystyle \inf_{\xi \in \sigma (S_{p, N}^{\mu})} | \nu - \xi |.
\]
The last equality in \eqref{eq:ResolCalc} follows from the self-adjointness of $S_{p, N}^{\mu}$.
Furthermore, by \eqref{eq:contour_estimate_2} and \eqref{eq:contour_estimate_3} we have
\begin{align}
| \nu - \lambda | = r_{\lambda}, \label{eq:mu_lambda}
\end{align}
for any $\nu \in \varGamma_{\lambda}(S_{p}^{\mu})$ and 
\begin{align}
\min_{\nu \in \varGamma_{\lambda}(S_{p}^{\mu})} \mathrm{dist}(\nu, \sigma(S_{p, N}^{\mu})) 
= \min_{\nu \in \varGamma_{\lambda}(S_{p}^{\mu})} |\nu - \lambda_N | 
\geq r_{\lambda}/2. \label{eq:mu_lambda_N}
\end{align} 
Combining \eqref{eq:ResolCalc}, \eqref{eq:mu_lambda} and \eqref{eq:mu_lambda_N} we have
\begin{align}
\max_{\nu \in \varGamma_{\lambda}(S_{p}^{\mu})} \| (R_{\nu}(S_{p}^{\mu})- R_{\nu}(S_{p, N}^{\mu})) \phi \|_{2} 
\leq \frac{2}{r_{\lambda}^{2}} \| (S_{p}^{\mu} - S_{p, N}^{\mu}) \phi \|_{2}. 
\label{eq:ResolMaxEstim}
\end{align}
Then we can deduce the conclusion \eqref{eq:EstimByResol_2} 
from \eqref{eq:EstimByResol} and \eqref{eq:ResolMaxEstim}.

Finally, if $\| \phi \|_{2} = 1$ and \eqref{eq:less_than_one} holds for $N > N_{0}$, 
it follows from \eqref{eq:EstimByResol_2} that $\| \phi - P_{\lambda_N}(S_{p, N}^{\mu}) \phi \|_{2}  < 1$.
Hence $P_{\lambda_N}(S_{p, N}^{\mu}) \phi \neq 0$ holds true.
\end{proof}

Here we present a key lemma for the estimate of the convergence rate. 
Part of this lemma is also based on the idea in \cite{bib:AtkinsonEigen1967}. 

\begin{lem}\label{eq:StrResolEstim}
Let $S_{p}^{\mu}$ satisfy 
Assumptions~\ref{assump:self-adjoint} and~\ref{assump:C_infty}, 
$\lambda \in \sigma(S_{p}^{\mu})$ and $\lambda_{N} \in \sigma(S_{p, N}^{\mu})$
satisfy Assumption~\ref{assump:approx_cond_single}. 
Furthermore, for an eigenvector $\phi$ corresponding to $\lambda$ 
with $\| \phi \|_{2} = 1$, assume that 
there exists a positive integer $N_{0}$ such that for any $N > N_{0}$ 
\begin{align}
\frac{2}{r_{\lambda}} \| (S_{p}^{\mu} - S_{p, N}^{\mu}) \phi \|_{2} < 1.
\label{eq:A-ANphi_less_than_1_2a}
\end{align}
Then for $N > N_{0}$ we have
\begin{align}
| \lambda - \lambda_N | 
\leq
\left(
3 + \frac{8 |\lambda|}{r_{\lambda}}
\right)
\| (S_{p}^{\mu} - S_{p, N}^{\mu}) \phi \|_{2}.
\label{eq:GeneralRate2a}
\end{align}
\end{lem}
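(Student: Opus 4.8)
The plan is to bound $|\lambda - \lambda_N|$ by relating it to the action of the two operators on a common, controlled vector, and then to invoke Lemma~\ref{lem:make_approx_eigenvec_2a} to guarantee that the relevant projected vector is nonzero. First I would set $\psi_N = P_{\lambda_N}(S_{p, N}^{\mu})\phi$, which by Lemma~\ref{lem:make_approx_eigenvec_2a} is a nonzero eigenvector of $S_{p, N}^{\mu}$ corresponding to $\lambda_N$ once~\eqref{eq:A-ANphi_less_than_1_2a} holds. The strategy is to compute $(S_{p}^{\mu}\phi, \psi_N)_2$ in two ways. Since $\phi$ is an eigenvector of $S_{p}^{\mu}$ with eigenvalue $\lambda$, one has $(S_{p}^{\mu}\phi, \psi_N)_2 = \lambda(\phi, \psi_N)_2$. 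On the other hand, writing $S_{p}^{\mu} = S_{p, N}^{\mu} + (S_{p}^{\mu} - S_{p, N}^{\mu})$ and using that $\psi_N$ is an eigenvector of the self-adjoint operator $S_{p, N}^{\mu}$, the term $(S_{p, N}^{\mu}\phi, \psi_N)_2 = (\phi, S_{p, N}^{\mu}\psi_N)_2 = \overline{\lambda_N}\,(\phi,\psi_N)_2 = \lambda_N(\phi,\psi_N)_2$, since $\lambda_N$ is real by self-adjointness.

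Subtracting, I would obtain an identity of the form
\begin{align}
(\lambda - \lambda_N)(\phi, \psi_N)_2 = ((S_{p}^{\mu} - S_{p, N}^{\mu})\phi, \psi_N)_2,
\label{eq:key_identity}
\end{align}
so that
\begin{align}
|\lambda - \lambda_N| \leq \frac{\| (S_{p}^{\mu} - S_{p, N}^{\mu})\phi \|_2\, \|\psi_N\|_2}{|(\phi, \psi_N)_2|}
\label{eq:ratio_bound}
\end{align}
by Cauchy--Schwarz. The remaining work is to control the ratio $\|\psi_N\|_2 / |(\phi, \psi_N)_2|$ from above. Here I would set $\delta = \|\phi - \psi_N\|_2$; by Lemma~\ref{lem:make_approx_eigenvec_2a} we have $\delta \leq \tfrac{2}{r_\lambda}\|(S_{p}^{\mu} - S_{p, N}^{\mu})\phi\|_2 < 1$. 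Then $\|\psi_N\|_2 \leq 1 + \delta$ and $|(\phi, \psi_N)_2| = |(\phi, \phi)_2 - (\phi, \phi - \psi_N)_2| \geq 1 - \delta$, giving $\|\psi_N\|_2 / |(\phi, \psi_N)_2| \leq (1+\delta)/(1-\delta)$.

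To land on the clean coefficient $\bigl(3 + \tfrac{8|\lambda|}{r_\lambda}\bigr)$ in~\eqref{eq:GeneralRate2a}, I expect that the crude estimate above needs refinement, because $(1+\delta)/(1-\delta)$ blows up as $\delta \to 1$ and does not obviously produce a term proportional to $|\lambda|/r_\lambda$. The better route is probably to expand the numerator in~\eqref{eq:key_identity} differently: write $(\phi, \psi_N)_2 = 1 - (\phi, \phi - \psi_N)_2$ and feed the quantity $(\lambda - \lambda_N)(\phi, \phi-\psi_N)_2$ back into the identity, so that $(\lambda - \lambda_N) = ((S_{p}^{\mu} - S_{p, N}^{\mu})\phi, \psi_N)_2 + (\lambda - \lambda_N)(\phi, \phi - \psi_N)_2$; the factor $|\lambda|$ then enters through a term like $|\lambda|\,\delta$ after further manipulation, since $|\lambda - \lambda_N| \leq |\lambda| + |\lambda_N|$ and $\lambda_N$ lies within $r_\lambda/2$ of $\lambda$. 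The main obstacle will be assembling these pieces so that the various occurrences of $\delta \leq \tfrac{2}{r_\lambda}\|(S_{p}^{\mu} - S_{p, N}^{\mu})\phi\|_2$ combine into exactly the stated constant $3 + \tfrac{8|\lambda|}{r_\lambda}$ rather than some larger bound; this is careful bookkeeping of constants rather than a conceptual difficulty, and the self-adjointness (ensuring real $\lambda_N$ and the adjoint swap) together with the normalization $\|\phi\|_2 = 1$ are the essential structural inputs.
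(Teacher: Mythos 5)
Your proposal is correct, and it takes a genuinely different route from the paper's. The paper normalizes the projected vector, setting $\phi_N = P_{\lambda_N}(S_{p,N}^{\mu})\phi / \|P_{\lambda_N}(S_{p,N}^{\mu})\phi\|_2$, writes $\lambda - \lambda_N$ as the difference of Rayleigh quotients $(S_p^{\mu}\phi,\phi)_2 - (S_{p,N}^{\mu}\phi_N,\phi_N)_2$, splits it into three terms via self-adjointness as in \eqref{eq:LamdaEstim0}, and bounds $\|\phi - \phi_N\|_2 \le 2\|\phi - P_{\lambda_N}(S_{p,N}^{\mu})\phi\|_2 \le \tfrac{4}{r_\lambda}E$, where $E = \|(S_p^{\mu}-S_{p,N}^{\mu})\phi\|_2$ and the factor $2$ is the price of normalization; the constant $3 + 8|\lambda|/r_\lambda$ then comes from $|\lambda_N| \le |\lambda| + r_\lambda/2$. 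You instead keep the unnormalized $\psi_N = P_{\lambda_N}(S_{p,N}^{\mu})\phi$ and use the exact identity $(\lambda - \lambda_N)(\phi,\psi_N)_2 = ((S_p^{\mu}-S_{p,N}^{\mu})\phi,\psi_N)_2$, valid because $\psi_N$ is an eigenvector for $\lambda_N$ (the contour encloses no other point of $\sigma(S_{p,N}^{\mu})$, by \eqref{eq:3a_out}) and $\lambda_N$ is real. You were right to discard your first bound: hypothesis \eqref{eq:A-ANphi_less_than_1_2a} gives only $\delta := \|\phi - \psi_N\|_2 < 1$, so the factor $(1+\delta)/(1-\delta)$ is unbounded. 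But your sketched refinement does close the argument, and your worry about the final bookkeeping is unfounded---it yields something sharper than required. From $\lambda - \lambda_N = ((S_p^{\mu}-S_{p,N}^{\mu})\phi,\psi_N)_2 + (\lambda-\lambda_N)(\phi,\phi-\psi_N)_2$, Cauchy--Schwarz, $\|\psi_N\|_2 \le 1+\delta < 2$, $\delta \le \tfrac{2}{r_\lambda}E$ (Lemma~\ref{lem:make_approx_eigenvec_2a}) and $|\lambda - \lambda_N| \le r_\lambda/2$ (condition \eqref{eq:3a_in}), you get $|\lambda - \lambda_N| \le 2E + \tfrac{r_\lambda}{2}\cdot\tfrac{2}{r_\lambda}E = 3E$; your cruder variant $|\lambda-\lambda_N| \le |\lambda| + |\lambda_N| \le 2|\lambda| + r_\lambda/2$ gives $\bigl(3 + \tfrac{4|\lambda|}{r_\lambda}\bigr)E$. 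Either way \eqref{eq:GeneralRate2a} follows \emph{a fortiori}. What the paper's heavier template buys is reusability: essentially the same three-term Rayleigh-quotient split is recycled in Lemma~\ref{eq:StrResolEstim_02}, where $P_{\varLambda_N}(S_{p,N}^{\mu})\phi$ is a combination of eigenvectors with distinct eigenvalues, so an identity like yours no longer produces a single factor $(\lambda - \lambda_N)$ and the polynomial $g_N$ device is needed, and again in Theorem~\ref{thm:local_2b_4a}; your identity, by contrast, is tied to the single-eigenvalue situation of Assumption~\ref{assump:approx_cond_single}, but within that situation it gives a cleaner proof and a slightly better constant.
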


\begin{proof}
By Lemma~\ref{lem:make_approx_eigenvec_2a}, for $N>N_{0}$ we can define 
\begin{align}
\phi_N = \frac{1}{\| P_{\lambda_N}(S_{p, N}^{\mu}) \phi \|_{2}} P_{\lambda_N}(S_{p, N}^{\mu}) \phi.
\label{eq:MakingEigenvec}
\end{align}
Then $\phi_{N}$ is an eigenvector corresponding to $\lambda_{N}$ with $\| \phi_{N} \|_{2} = 1$. 
Then noting $\lambda = (S_{p}^{\mu} \phi, \phi)_{2}$ and $\lambda_N = (S_{p, N}^{\mu} \phi_N, \phi_N)_{2}$ we have
\begin{align}
\lambda - \lambda_N
& =  (S_{p}^{\mu}\phi, \phi)_{2} - (S_{p, N}^{\mu} \phi_N,  \phi_N)_{2} \notag \\
& =  (S_{p}^{\mu}\phi, \phi - \phi_N)_{2}  +  ((S_{p}^{\mu} - S_{p, N}^{\mu}) \phi, \phi_N)_{2} + (\phi - \phi_N, S_{p, N}^{\mu} \phi_N)_{2},
\label{eq:LamdaEstim0}
\end{align}
where the last equality follows from the self-adjointness of $S_{p, N}^{\mu}$. 
Therefore we have
\begin{align}
|\lambda - \lambda_N|
& \leq (|\lambda| + |\lambda_{N}| )\, \| \phi - \phi_N \|_{2} + \| (S_{p}^{\mu} - S_{p, N}^{\mu}) \phi \|_{2}. 
\label{eq:LamdaEstim1}
\end{align}
Here $\| \phi - \phi_N \|_{2}$ is bounded by
\begin{align}
\|  \phi - \phi_N \|_{2} 
& \leq \| \phi - P_{\lambda_N}(S_{p, N}^{\mu})\phi  \|_{2} + \| P_{\lambda_N}(S_{p, N}^{\mu})\phi - \phi_N  \|_{2}. 
\label{eq:split_phi_phi_N}
\end{align}
Since $\| P_{\lambda_N}(S_{p, N}^{\mu})\phi - \phi_N  \|_{2} $ in \eqref{eq:split_phi_phi_N} is bounded as
\begin{align*}
\| P_{\lambda_N}(S_{p, N}^{\mu})\phi - \phi_N\|_{2} 
& = \left\| \left( 1 - \frac{1}{\| P_{\lambda_N}(S_{p, N}^{\mu}) \phi \|_{2}}\right) P_{\lambda_N}(S_{p, N}^{\mu}) \phi \right \|_{2} \\
& = \left | \, \| P_{\lambda_N}(S_{p, N}^{\mu}) \phi \|_{2}  - 1\, \right | \\
& \leq \| P_{\lambda_N}(S_{p, N}^{\mu}) \phi - \phi \|_{2},
\end{align*}
we have
\begin{align}
\|  \phi - \phi_N \|_{2} 
& \leq 2 \| \phi - P_{\lambda_N}(S_{p, N}^{\mu}) \phi \|_{2}.
\label{eq:phi_phi_N_1}
\end{align}
Applying the inequality \eqref{eq:EstimByResol_2} of 
Lemma~\ref{lem:make_approx_eigenvec_2a} to \eqref{eq:phi_phi_N_1}, 
we have
\begin{align}
\|  \phi - \phi_N \|_{2} 
& \leq
\frac{4}{r_{\lambda}} \| (S_{p}^{\mu} - S_{p, N}^{\mu}) \phi \|_{2}.
\label{eq:phi_phi_N_2}
\end{align}
Then combining \eqref{eq:LamdaEstim1} and \eqref{eq:phi_phi_N_2}, we have
\begin{align*}
| \lambda - \lambda_N | 
\leq
\left(
\frac{4}{r_{\lambda}} (|\lambda| + |\lambda_{N}|) + 1
\right)
\| (S_{p}^{\mu} - S_{p, N}^{\mu}) \phi \|_{2}.
\end{align*}
Finally, noting $|\lambda_{N}| \leq | \lambda - \lambda_{N} | + |\lambda| \leq r_{\lambda} / 2 + |\lambda|$, we have the conclusion.
\end{proof}

% \begin{rem}
% \label{rem:diff_Atkinson}
% In \cite{bib:AtkinsonEigen1967} and \cite{bib:AtkinsonEigen1975},
% the projection operator by the Dunford integral is used for
% the analysis of linear compact operators. 
% We have shown that a similar method can be applied to unbounded operators. 
% Note that we use $\| (S_{p}^{\mu} - S_{p, N}^{\mu}) \phi \|_{2}$ for the estimate 
% instead of $\| S_{p}^{\mu} - S_{p, N}^{\mu} \|_{2}$ etc. to cover unbounded operators.
% \end{rem}

\begin{rem}
\label{rem:SelfAd}
Assumption~\ref{assump:self-adjoint}, i.e.~the self-adjointness of $S_{p}^{\mu}$ is used 
(i) to estimate the norm of the resolvent of the approximate operator as 
$\| (\nu I - S_{p, N}^{\mu})^{-1} \|_{2}\ = \mathrm{dist}(\nu, \sigma(S_{p, N}^{\mu}))^{-1}$
in \eqref{eq:ResolCalc}, 
(ii) to make an approximate eigenvector as \eqref{eq:MakingEigenvec} and 
(iii) to deduce the equality \eqref{eq:LamdaEstim0}. 
% If the self-adjointness of $S_{p}^{\mu}$ is not assumed, some difficulties occurs in these procedures.
% However we can overcome them and analyze the convergence rate for 
% non-self-adjoint operators in some special cases. 
% Such arguments will be reported somewhere else soon. 
\end{rem}

What remains is to estimate $\| (S_{p}^{\mu} - S_{p,N}^{\mu}) \phi \|_{2}$ in the RHS of \eqref{eq:GeneralRate2a}.
Since 
\begin{align*}
\| (S_{p}^{\mu} - S_{p,N}^{\mu}) \phi \|_{2} 
& = \| (S_{p}^{\mu}- \hat{P}_N S_{p}^{\mu}) \phi +(\hat{P}_N S_{p}^{\mu} - \hat{P}_N S_{p}^{\mu} \hat{P}_N) \phi \|_{2} \notag \\
& \leq \| (I - \hat{P}_N) S_{p}^{\mu} \phi \|_{2}  + \| \hat{P}_N S_{p}^{\mu} (I - \hat{P}_N) \phi \|_{2} \notag \\
& \leq ( |\lambda|  + \| \hat{P}_N S_{p}^{\mu}\|_{2} ) \| (I - \hat{P}_N) \phi \|_{2}, 
\end{align*}
we consider estimation of $\| \hat{P}_N S_{p}^{\mu} \|_{2}$ and $\| (I - \hat{P}_N) \phi \|_{2}$. 
Their estimates are given by the following lemmas, 
whose proofs are shown in Appendix~\ref{sec:ProofOrders}. 
It suffices to consider the case of Assumption~\ref{assump:C_infty} for $\| \hat{P}_N S_{p}^{\mu} \|_{2}$, 
whereas the both cases of Assumptions~\ref{assump:C_infty} and~\ref{assump:analytic} 
need to be considered for $\| (I - \hat{P}_N) \phi \|_{2}$
which depends on the smoothness of $\phi$. 

\begin{lem}
\label{lem:norm_P_N_S}
Let $S_{p}^{\mu}$ satisfy Assumption~\ref{assump:C_infty}. 
Then there exits a positive real constant $C$ depending only on $p$ and $f_{0}, f_{1}, \ldots, f_{p-1}$ such that
\[
\| \hat{P}_N S_{p}^{\mu} \|_{2} \leq C\, N^{p + 1/2}. 
\]
\end{lem}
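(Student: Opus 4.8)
The plan is to exploit the finite-rank structure of $\hat{P}_N S_p^\mu$. Its range lies in the $(2N+1)$-dimensional space $V_N = \mathrm{span}\{ e_n : |n| \le N \}$, where $e_n(x) = L^{-1/2}\exp(-\mathrm{i}\, 2\pi n x / L)$ form an orthonormal family in $L_2([0,L])_{\mathrm{per}}$. Hence for any $\phi \in H^p([0,L])_{\mathrm{per}}$ with $\| \phi \|_2 = 1$, writing $c_n = (S_p^\mu \phi, e_n)_2$, Parseval's identity gives $\| \hat{P}_N S_p^\mu \phi \|_2^2 = \sum_{|n| \le N} |c_n|^2$. I would bound each $|c_n|$ uniformly by a constant times $N^p$ and then sum the $2N+1$ terms; the half-integer power in $N^{p+1/2}$ will emerge precisely as the factor $\sqrt{2N+1}$ multiplying the single-coefficient bound.

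The coefficient bound comes from integration by parts. Setting $f_p \equiv 1$, we have $c_n = \sum_{j=0}^p \int_0^L f_j \phi^{(j)} \overline{e_n}\, \mathrm{d}x$. Integrating by parts $j$ times, which is legitimate and free of boundary terms by $L$-periodicity, moves all derivatives off $\phi$, giving $\int_0^L f_j \phi^{(j)} \overline{e_n}\, \mathrm{d}x = (-1)^j \int_0^L \phi\, \frac{\mathrm{d}^j}{\mathrm{d}x^j}(f_j \overline{e_n})\, \mathrm{d}x$. Leibniz's rule together with $(\overline{e_n})^{(j-i)} = (\mathrm{i}\, 2\pi n / L)^{j-i} \overline{e_n}$ then yields the pointwise estimate $\bigl\| \frac{\mathrm{d}^j}{\mathrm{d}x^j}(f_j \overline{e_n}) \bigr\|_2 \le \sum_{i=0}^j \binom{j}{i} (2\pi |n| / L)^{j-i} \| f_j^{(i)} \|_\infty \le C_j\, (2\pi N / L)^p$ for $|n| \le N$ and $N$ large, where $C_j$ depends only on $p$ and the sup-norms $\| f_j^{(i)} \|_\infty$ ($0 \le i \le j$), all finite by Assumption~\ref{assump:C_infty} on the compact period. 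Applying the Cauchy--Schwarz inequality to $c_n = \sum_j (-1)^j (\phi, \frac{\mathrm{d}^j}{\mathrm{d}x^j}(f_j \overline{e_n}))_2$ gives $|c_n| \le \| \phi \|_2 \sum_{j=0}^p \bigl\| \frac{\mathrm{d}^j}{\mathrm{d}x^j}(f_j \overline{e_n}) \bigr\|_2 \le C\, (2\pi N / L)^p$.

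Summing over the $2N+1$ retained modes yields $\sum_{|n| \le N} |c_n|^2 \le (2N+1)\, C^2 (2\pi N / L)^{2p} = O(N^{2p+1})$, and taking square roots produces $\| \hat{P}_N S_p^\mu \phi \|_2 \le C'\, N^{p+1/2}$ with $C'$ depending only on $p$ and $f_0, \ldots, f_{p-1}$. Since this bound is uniform over the dense domain $H^p([0,L])_{\mathrm{per}}$, it extends to the asserted operator-norm estimate on $L_2([0,L])_{\mathrm{per}}$. The main obstacle is the bookkeeping in the middle step: verifying that after the Leibniz expansion every term is dominated by $N^p$ (the top-order contribution $j=p$, $i=0$ gives exactly $(2\pi |n| / L)^p$) and that the constant absorbs only $p$ and the finitely many derivative sup-norms of the $f_j$. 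The smoothness hypothesis enters precisely here, through the finiteness of $\| f_j^{(i)} \|_\infty$; the extra $N^{1/2}$ relative to the naive $N^p$ is the unavoidable price of estimating a rank-$(2N+1)$ operator coefficient by coefficient.
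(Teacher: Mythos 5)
Your proof is correct, but it reaches the crucial per-coefficient estimate by a genuinely different route than the paper. Both arguments share the same skeleton: bound each of the $2N+1$ retained Fourier coefficients of $S_p^{\mu}\phi$ by $C\,N^{p}\|\phi\|_{2}$, then pay the factor $(2N+1)^{1/2}$ by summing over the modes. The paper, however, stays entirely on the Fourier side: it writes the $n$-th coefficient as the convolution $\sum_{m}(\hat{f}_{j})_{m}\left(-\mathrm{i}\,2\pi(n-m)/L\right)^{j}\hat{\phi}_{n-m}$, invokes the rapid decay $|(\hat{f}_{j})_{m}|\leq C_{j,p+1}(1+|m|)^{-p-1}$ of the Fourier coefficients of the $C^{\infty}$ functions $f_{j}$, and applies Cauchy--Schwarz in the convolution index $m$. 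You instead work in physical space: integration by parts moves all $j$ derivatives off $\phi$ onto the smooth function $f_{j}\overline{e_{n}}$, Leibniz's rule produces the powers of $2\pi|n|/L\leq 2\pi N/L$, and Cauchy--Schwarz in $x$ gives $|c_{n}|\leq C\,N^{p}\|\phi\|_{2}$. Your route is more elementary (no Fourier-decay lemma is needed) and in fact requires less smoothness: only the sup-norms $\|f_{j}^{(i)}\|_{\infty}$ with $i\leq j\leq p$ enter, so $f_{j}\in C^{p}$ would suffice. What the paper's Fourier-side argument buys is that $\hat{P}_{N}S_{p}^{\mu}\psi$ is defined, and the bound holds, directly for \emph{every} $\psi\in L_{2}([0,L])_{\mathrm{per}}$, since the convolution sums converge absolutely thanks to the coefficient decay; in your version the bound is first proved on the domain $H^{p}([0,L])_{\mathrm{per}}$ and then transferred to the operator norm by density. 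That transfer is legitimate (the norm of a densely defined bounded operator equals the supremum over its domain), and every application of the lemma in the paper feeds $\hat{P}_{N}S_{p}^{\mu}$ an argument lying in $H^{p}([0,L])_{\mathrm{per}}$ --- an eigenvector or a trigonometric polynomial $\hat{P}_{N}\psi$ --- so nothing is lost. One cosmetic point: your ``for $N$ large'' caveat in the Leibniz step is unnecessary, since for all $N\geq 1$ and $|n|\leq N$ one has $(2\pi|n|/L)^{j-i}\leq(1+2\pi/L)^{p}N^{p}$, so the constant absorbs the low-$N$ cases.
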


\begin{lem}
\label{lem:ProjEstim_C_infty}
Let $S_{p}^{\mu}$ satisfy Assumption~\ref{assump:C_infty} and 
$\phi \in H^{p}([0,L])_{\mathrm{per}}$ be an eigenvector of $S_{p}^{\mu}$ corresponding to 
$\lambda \in \sigma(S_{p}^{\mu} )$ with $\|\phi\|_{2}=1$. 
Then for any positive integer $q$
there exists a positive constant $C$ depending only on $\phi$ and $q$
such that 
\begin{align*}
\| (I - \hat{P}_N) \phi \|_{2} \leq C\, (1 + N)^{-q}.
\end{align*}
\end{lem}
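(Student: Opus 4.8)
The plan is to prove that the tail of the Fourier series of a smooth eigenvector $\phi$ decays faster than any polynomial rate in $N$. Since $\phi$ is an eigenvector of $S_p^\mu$ with $S_p^\mu \phi = \lambda \phi$, and $S_p^\mu$ is an order-$p$ differential operator with $C^\infty$ periodic coefficients, a bootstrapping argument shows $\phi$ is itself $C^\infty$: rewriting the eigenvalue equation as $\mathrm{d}^p\phi/\mathrm{d}x^p = \lambda\phi - \sum_{j=0}^{p-1} f_j\, \mathrm{d}^j\phi/\mathrm{d}x^j$ expresses the top derivative in terms of lower ones, and iterating this identity (using that the $f_j$ are smooth) gives $\phi \in H^s([0,L])_{\mathrm{per}}$ for every $s$, hence $\phi \in C^\infty$.

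The key step is then to relate Sobolev regularity to Fourier-coefficient decay. Writing $\phi(x) = L^{-1/2}\sum_n \hat\phi_n \exp(-2\pi \mathrm{i} n x / L)$, the projection $\hat P_N$ truncates to $|n|\le N$, so by Parseval's identity
\begin{align}
\| (I - \hat P_N)\phi \|_2^2 = \sum_{|n| > N} |\hat\phi_n|^2.
\end{align}
Because $\phi \in C^\infty \subset H^{q+1}([0,L])_{\mathrm{per}}$ for every $q$, its $(q+1)$-st derivative lies in $L_2$, which means $\sum_n (1 + |n|)^{2(q+1)} |\hat\phi_n|^2 =: A_{q+1}^2 < \infty$. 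First I would use this finiteness to bound the tail: for $|n| > N$ we have $(1+|n|)^{-(q+1)} \le (1+N)^{-(q+1)} \cdot (1+|n|)^{0}$ in a form that lets us factor out the decay, giving
\begin{align}
\sum_{|n| > N} |\hat\phi_n|^2
\le (1 + N)^{-2q} \sum_{|n| > N} (1+|n|)^{2q} |\hat\phi_n|^2
\le (1+N)^{-2q}\, A_{q+1}^2,
\end{align}
where I have absorbed one extra power so that the residual sum $\sum_n (1+|n|)^{2q}|\hat\phi_n|^2$ remains finite (indeed bounded by $A_{q+1}^2$). Taking square roots yields $\| (I - \hat P_N)\phi \|_2 \le A_{q+1}(1+N)^{-q}$, so setting $C := A_{q+1}$ gives the claim; note $C$ depends only on $\phi$ (through its smoothness) and on $q$, as required.

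The main obstacle is the regularity bootstrap: making rigorous the claim that an $L_2$ (or $H^p$) eigenfunction of a smooth-coefficient elliptic operator is automatically $C^\infty$. This is standard elliptic regularity, but it must be carried out carefully in the periodic setting — one argues inductively that $\phi \in H^s$ implies $\phi \in H^{s+1}$ by differentiating the eigenvalue relation and controlling the products $f_j\,\mathrm{d}^j\phi/\mathrm{d}x^j$ using that multiplication by a $C^\infty$ periodic function preserves each $H^s([0,L])_{\mathrm{per}}$. Once the smoothness of $\phi$ is established, the passage to Fourier tail decay via Parseval is routine, and the only remaining care is bookkeeping the powers of $(1+|n|)$ so that the extracted constant $A_{q+1}$ is genuinely finite and independent of $N$.
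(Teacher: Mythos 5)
Your proposal is correct and takes essentially the same route as the paper: the same elliptic bootstrap (the paper's Lemma~\ref{lem:EigenSmooth_C_infty}, proved by expressing $\phi^{(p)}$ in terms of lower derivatives and iterating) to get $\phi \in C^{\infty}$, followed by rapid Fourier-tail decay and factoring $(1+N)^{-2q}$ out of the tail sum via Parseval. The only cosmetic difference is that you quantify smoothness through the weighted sum $\sum_{n}(1+|n|)^{2(q+1)}|\hat{\phi}_n|^{2}<\infty$, whereas the paper uses the pointwise coefficient bound $|\hat{\phi}_n|\leq C_{\phi,q}(1+|n|)^{-q-1}$ and sums the convergent series $\sum_{n}(1+|n|)^{-2}$; both give a constant depending only on $\phi$ and $q$, as required.
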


\begin{lem}
\label{lem:ProjEstim_analytic}
Let $S_{p}^{\mu}$ satisfy Assumption~\ref{assump:analytic} for $d>0$ and 
$\phi \in H^{p}([0,L])_{\mathrm{per}}$ be an eigenvector of $S_{p}^{\mu}$ corresponding to 
$\lambda \in \sigma(S_{p}^{\mu} )$ with $\|\phi\|_{2}=1$. 
Then for any $\varepsilon $ with $0 < \varepsilon < d$ 
there exists a positive constant $C$ depending only on $\phi$, $d$ and $\varepsilon$
such that
\begin{align*}
\| (I - \hat{P}_N) \phi \|_{2} \leq C\, \exp\left( - \frac{2\pi\, (d - \varepsilon )}{L}\, N \right),
\end{align*}
where $L$ is the period of the coefficient functions of $S_{p}^{\mu}$. 
\end{lem}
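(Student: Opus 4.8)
The plan is to reduce the claim, via Parseval's identity, to an exponential-decay estimate on the Fourier coefficients of $\phi$, and to obtain that estimate by first establishing that $\phi$ is analytic on the strip $\mathcal{D}_d$ and then shifting the contour of the Fourier-coefficient integral into the strip. Since $\hat{P}_N$ truncates the Fourier series of $\phi$, Parseval's identity gives
\[
\| (I - \hat{P}_N)\phi \|_2^2 = \sum_{|n| > N} |\hat{\phi}_n|^2,
\]
so it suffices to prove that for each $\varepsilon \in (0,d)$ there is a constant $M_\varepsilon$ with $|\hat{\phi}_n| \leq M_\varepsilon \exp(-\tfrac{2\pi(d-\varepsilon)}{L}|n|)$ for all $n$.

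The key step, and the main obstacle, is a regularity argument showing that $\phi$ extends to an $L$-periodic analytic function on $\mathcal{D}_d$. Because $\phi$ is an eigenvector, it solves the homogeneous linear ordinary differential equation $S_p^{\mu}\phi = \lambda\phi$, i.e.
\[
\phi^{(p)} = \lambda\,\phi - \sum_{j=0}^{p-1} f_j\, \phi^{(j)},
\]
whose leading coefficient is the nonvanishing constant $1$ and whose remaining coefficients $f_j$ are, by Assumption~\ref{assump:analytic}, analytic on $\mathcal{D}_d$. Fixing a real point $x_0$, the initial data $\phi(x_0),\dots,\phi^{(p-1)}(x_0)$ are well defined by the Sobolev embedding $H^p([0,L])_{\mathrm{per}} \hookrightarrow C^{p-1}$, and by the classical theory of linear ODEs in the complex domain (no finite singular points, $\mathcal{D}_d$ simply connected) the solution determined by these data continues analytically to all of $\mathcal{D}_d$ and agrees with $\phi$ on $\mathbf{R}$. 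Moreover, since the $f_j$ are $L$-periodic, $z \mapsto \phi(z+L)$ solves the same equation and coincides with $\phi$ on $\mathbf{R}$, so by uniqueness the analytic extension is itself $L$-periodic. I expect this to be the delicate part: the eigenfunction is supplied only as an $H^p$ element, so one must invoke analytic continuation for the ODE to upgrade it to a genuine analytic function on the strip while checking that no singularities obstruct the continuation.

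With analyticity in hand, fix $\varepsilon \in (0,d)$. On the closed substrip $\{|\mathrm{Im}\,z| \leq d - \varepsilon\}$ the periodic analytic function $\phi$ is bounded, so $M_\varepsilon := \tfrac{1}{\sqrt{L}}\sup_{|y| \leq d-\varepsilon} \int_0^L |\phi(x+\mathrm{i}y)|\,\mathrm{d}x$ is finite. Writing $\hat{\phi}_n = \tfrac{1}{\sqrt{L}}\int_0^L \phi(x)\exp(\mathrm{i}\tfrac{2\pi n x}{L})\,\mathrm{d}x$ and shifting the contour of integration from the real axis to $\mathrm{Im}\,z = (d-\varepsilon)\,\mathrm{sgn}(n)$, with the two vertical segments cancelling by $L$-periodicity, yields
\[
|\hat{\phi}_n| \leq M_\varepsilon \exp\!\left( -\frac{2\pi(d-\varepsilon)}{L}\,|n| \right).
\]

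Substituting this bound into the Parseval tail sum and estimating the resulting geometric series gives
\[
\| (I - \hat{P}_N)\phi \|_2^2 \leq 2 M_\varepsilon^2 \sum_{n > N} \exp\!\left( -\frac{4\pi(d-\varepsilon)}{L}\,n \right) \leq C^2 \exp\!\left( -\frac{4\pi(d-\varepsilon)}{L}\,N \right),
\]
and taking square roots produces the claimed bound with $C$ depending only on $\phi$, $d$ and $\varepsilon$. The remaining steps (the contour shift and the geometric summation) are routine; the entire weight of the proof rests on the analytic-continuation step for the eigenfunction.
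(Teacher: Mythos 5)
Your proposal is correct and follows essentially the same route as the paper: an analytic, $L$-periodic extension of $\phi$ to the strip obtained from the linear ODE $S_p^{\mu}\phi = \lambda\phi$ with analytic coefficients (the paper's Lemma~\ref{lem:EigenSmooth_analytic}, proved there via the classical successive-approximation theorem you invoke as ``classical theory''), followed by the Paley--Wiener contour shift for $|\hat{\phi}_n|$ and the Parseval tail estimate with a geometric series. The only cosmetic difference is that the paper first upgrades $\phi$ to $C^{\infty}$ by a bootstrap (Lemma~\ref{lem:EigenSmooth_C_infty}) before matching it with the analytic solution, whereas you rely on ODE uniqueness directly from the $H^p$ data; both are sound.
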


Here we prove Theorem~\ref{thm:main01}. 

\renewcommand{\proofname}{Proof of Theorem~\ref{thm:main01}}
\begin{proof}
Since it follows from Lemmas~\ref{lem:norm_P_N_S} and~\ref{lem:ProjEstim_C_infty} 
that \\ $\| (S_{p}^{\mu} - S_{p, N}^{\mu})\phi \|_{2} \to 0$ as $N \to \infty$, 
the condition \eqref{eq:A-ANphi_less_than_1_2a} in Lemma~\ref{eq:StrResolEstim} is satisfied. 
Therefore we can deduce the conclusion~\eqref{eq:main_01}
from 
Lemmas~\ref{eq:StrResolEstim}, \ref{lem:norm_P_N_S} and~\ref{lem:ProjEstim_C_infty}, 
and~\eqref{eq:main_02}
from 
Lemmas~\ref{eq:StrResolEstim}, \ref{lem:norm_P_N_S} and~\ref{lem:ProjEstim_analytic}. 
\end{proof}
\renewcommand{\proofname}{Proof}

\subsection{Proof of Theorem~\ref{thm:main03}}
\label{subsec:main03_04}

We use almost the same methods as the ones of Lemmas~\ref{lem:make_approx_eigenvec_2a} and~\ref{eq:StrResolEstim}.
On Assumption~\ref{assump:approx_cond_plural},
we consider $P_{\varLambda_N}(S_{p, N}^{\mu}) \phi$
for an eigenvector $\phi$ corresponding to $\lambda \in \sigma(S_{p, N}^{\mu})$, 
where $\varLambda_N = \{ \lambda_{N,1}, \ldots, \lambda_{N,k} \}$, 
\begin{align}
P_{\varLambda_{N}}(S_{p, N}^{\mu}) = 
\frac{1}{2\pi \mathrm{i}} \oint_{\varGamma_{\Lambda}(S_{p}^{\mu})} R_{\mu}(S_{p, N}^{\mu})\, \mathrm{d} \mu, 
\label{eq:DunfordIntP_pl}
\end{align}
and $\varGamma_{\Lambda}(S_{p}^{\mu})$ is a directed circle with center $\lambda$ and radius $r_{\lambda}$, 
which contains $\varLambda_N$. 
Here it should be noted that 
\eqref{eq:contour_estimate_1}, \eqref{eq:contour_estimate_2} and~\eqref{eq:contour_estimate_3}
can be used for Assumption~\ref{assump:approx_cond_plural}
when $\lambda_{N}$ is replaced by $\lambda_{N,i}\ (i=1,2,\ldots,k)$. 
In this case $P_{\varLambda_N}(S_{p, N}^{\mu}) \phi$ 
is not an eigenvector corresponding to a single eigenvalue of $S_{p, N}^{\mu}$
but a linear combination of eigenvectors corresponding to $\lambda_{N, 1},\ldots , \lambda_{N, k}$. 
Then we can estimate the convergence rate of one of them 
after appropriate rearrangement of them explained later. 

\begin{lem}
\label{lem:make_approx_eigenvec_2b}
Let $S_{p}^{\mu}$ satisfy 
Assumptions~\ref{assump:self-adjoint} and~\ref{assump:C_infty},  
$\lambda \in \sigma(S_{p}^{\mu})$ and $\lambda_{N, 1},\ldots, \lambda_{N, k} \in \sigma(S_{p, N}^{\mu})$
satisfy Assumption~\ref{assump:approx_cond_plural}. 
Then for an eigenvector $\phi$ 
corresponding to $\lambda$ we have
\begin{align}
\| \phi - P_{\varLambda_N}(S_{p, N}^{\mu}) \phi \|_{2}  
\leq 
\frac{2}{r_{\lambda}} \| (S_{p}^{\mu} - S_{p, N}^{\mu}) \phi \|_{2},
\label{eq:EstimByResol_2_2b}
\end{align}
where $\varLambda_N = \{ \lambda_{N,1}, \ldots, \lambda_{N,k} \}$.
Furthermore, if $\| \phi \|_{2} = 1$ and 
there exists a positive integer $N_{0}$ such that for any $N > N_{0}$ 
\begin{align}
\frac{2}{r_{\lambda}} \| (S_{p}^{\mu} - S_{p, N}^{\mu}) \phi \|_{2} < 1, 
\label{eq:less_than_one_2b}
\end{align}
we have $P_{\varLambda_{N}}(S_{p, N}^{\mu}) \phi \neq 0$ for $N > N_{0}$.
\end{lem}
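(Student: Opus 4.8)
The plan is to replay the argument of Lemma~\ref{lem:make_approx_eigenvec_2a} verbatim in structure, replacing the single circle $\varGamma_{\lambda}(S_{p}^{\mu})$ by the circle $\varGamma_{\Lambda}(S_{p}^{\mu})$ and the projection $P_{\lambda_{N}}$ by $P_{\varLambda_{N}}$. First I would check that the two projections occurring in the estimate are the intended ones. By~\eqref{eq:3b_out} every element of $\sigma(S_{p}^{\mu})\setminus\{\lambda\}$ lies at distance at least $2r_{\lambda}$ from $\lambda$, hence strictly outside the disk of radius $r_{\lambda}$ bounded by $\varGamma_{\Lambda}(S_{p}^{\mu})$; consequently the Dunford integral of $R_{\nu}(S_{p}^{\mu})$ along $\varGamma_{\Lambda}(S_{p}^{\mu})$ equals the spectral projection $P_{\lambda}(S_{p}^{\mu})$ onto the $\lambda$-eigenspace, and the residue theorem gives $P_{\lambda}(S_{p}^{\mu})\phi=\phi$. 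Dually, \eqref{eq:3b_in} places all of $\lambda_{N,1},\ldots,\lambda_{N,k}$ inside $\varGamma_{\Lambda}(S_{p}^{\mu})$ while~\eqref{eq:3b_out} keeps the remaining elements of $\sigma(S_{p,N}^{\mu})$ outside, so $P_{\varLambda_{N}}(S_{p,N}^{\mu})$ in~\eqref{eq:DunfordIntP_pl} is the well-defined projection onto the sum of the eigenspaces for $\lambda_{N,1},\ldots,\lambda_{N,k}$.

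Next I would write $\|\phi-P_{\varLambda_{N}}(S_{p,N}^{\mu})\phi\|_{2}=\|P_{\lambda}(S_{p}^{\mu})\phi-P_{\varLambda_{N}}(S_{p,N}^{\mu})\phi\|_{2}$ as a single contour integral of the resolvent difference and bound it by $\frac{1}{2\pi}\,l(\varGamma_{\Lambda}(S_{p}^{\mu}))\,\max_{\nu}\|(R_{\nu}(S_{p}^{\mu})-R_{\nu}(S_{p,N}^{\mu}))\phi\|_{2}=r_{\lambda}\max_{\nu}\|\cdots\|_{2}$, using the length estimate~\eqref{eq:contour_estimate_1}. Then, exactly as in~\eqref{eq:ResolCalc}, I would insert the second resolvent identity
\[
R_{\nu}(S_{p}^{\mu})-R_{\nu}(S_{p,N}^{\mu})=(\nu I-S_{p,N}^{\mu})^{-1}(S_{p}^{\mu}-S_{p,N}^{\mu})(\nu I-S_{p}^{\mu})^{-1},
\]
use the eigenvector relation $(\nu I-S_{p}^{\mu})^{-1}\phi=(\nu-\lambda)^{-1}\phi$, and invoke the self-adjointness identity $\|(\nu I-S_{p,N}^{\mu})^{-1}\|_{2}=\mathrm{dist}(\nu,\sigma(S_{p,N}^{\mu}))^{-1}$ to obtain
\[
\|(R_{\nu}(S_{p}^{\mu})-R_{\nu}(S_{p,N}^{\mu}))\phi\|_{2}\leq|\nu-\lambda|^{-1}\,\mathrm{dist}(\nu,\sigma(S_{p,N}^{\mu}))^{-1}\,\|(S_{p}^{\mu}-S_{p,N}^{\mu})\phi\|_{2}.
\]

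The only step that genuinely differs from the single-eigenvalue case, and the point I expect to require care, is the uniform lower bound on $\mathrm{dist}(\nu,\sigma(S_{p,N}^{\mu}))$ along $\varGamma_{\Lambda}(S_{p}^{\mu})$, since now the nearest approximate eigenvalue to a given $\nu$ may be any of the $\lambda_{N,i}$. For $\nu$ on the contour I would use~\eqref{eq:contour_estimate_2}, namely $|\nu-\lambda|=r_{\lambda}$, and combine it with~\eqref{eq:3b_in}, which yields $|\nu-\lambda_{N,i}|\geq r_{\lambda}-r_{\lambda}/2=r_{\lambda}/2$ for each $i$, and with~\eqref{eq:3b_out}, which yields $|\nu-\xi|\geq 2r_{\lambda}-r_{\lambda}=r_{\lambda}$ for every $\xi\in\sigma(S_{p,N}^{\mu})\setminus\varLambda_{N}$; taking the infimum over all of $\sigma(S_{p,N}^{\mu})$ gives $\mathrm{dist}(\nu,\sigma(S_{p,N}^{\mu}))\geq r_{\lambda}/2$ uniformly, the exact analogue of~\eqref{eq:mu_lambda_N}. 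Substituting $|\nu-\lambda|^{-1}=r_{\lambda}^{-1}$ and $\mathrm{dist}(\nu,\sigma(S_{p,N}^{\mu}))^{-1}\leq 2/r_{\lambda}$ then gives $\max_{\nu}\|\cdots\|_{2}\leq\frac{2}{r_{\lambda}^{2}}\|(S_{p}^{\mu}-S_{p,N}^{\mu})\phi\|_{2}$, and multiplying by $r_{\lambda}$ produces~\eqref{eq:EstimByResol_2_2b}. Finally, for the nonvanishing claim, under $\|\phi\|_{2}=1$ and~\eqref{eq:less_than_one_2b} the bound just proved gives $\|\phi-P_{\varLambda_{N}}(S_{p,N}^{\mu})\phi\|_{2}<1=\|\phi\|_{2}$, so the reverse triangle inequality forces $\|P_{\varLambda_{N}}(S_{p,N}^{\mu})\phi\|_{2}>0$, i.e.\ $P_{\varLambda_{N}}(S_{p,N}^{\mu})\phi\neq 0$ for $N>N_{0}$. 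Since every estimate is uniform in $N$, the argument is essentially a transcription of Lemma~\ref{lem:make_approx_eigenvec_2a} once the separation bookkeeping above is in place, and I anticipate no obstacle beyond that bookkeeping.
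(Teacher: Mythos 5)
Your proposal is correct and follows exactly the route the paper intends: the paper's own proof simply notes that the circle $\varGamma_{\lambda}(S_{p}^{\mu}) = \partial B_{\lambda}(r_{\lambda})$ encloses $\lambda$ and all of $\lambda_{N,1},\ldots,\lambda_{N,k}$ while excluding the rest of both spectra, and then repeats the argument of Lemma~\ref{lem:make_approx_eigenvec_2a} verbatim, which is precisely what you have written out. Your explicit verification of the uniform bound $\mathrm{dist}(\nu,\sigma(S_{p,N}^{\mu}))\geq r_{\lambda}/2$ on the contour (covering both the $\lambda_{N,i}$ and the remaining approximate eigenvalues) is the only point where the plural case differs, and you handle it correctly.
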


\begin{proof}
Noting the contour $\varGamma_{\lambda}(S_{p}^{\mu}) = \partial B_{\lambda}(r_{\lambda})$
surrounds $\lambda$ and $\lambda_{N,1}, \ldots, \lambda_{N,k}$ in the case of Assumption~\ref{assump:approx_cond_plural}, 
we can use $\varGamma_{\lambda}(S_{p}^{\mu})$ to define $P_{\lambda}(S_{p}^{\mu})$ and $P_{\varLambda_{N}}(S_{p, N}^{\mu})$. 
Then we can obtain the conclusions in a similar manner to Lemma~\ref{lem:make_approx_eigenvec_2a}.  
\end{proof}

\begin{lem}
\label{eq:StrResolEstim_02}
Let $S_{p}^{\mu}$ satisfy 
Assumptions~\ref{assump:self-adjoint} and~\ref{assump:C_infty},  
$\lambda \in \sigma(S_{p}^{\mu})$ and $\lambda_{N, 1},\ldots, \lambda_{N, k} \in \sigma(S_{p, N}^{\mu})$
satisfy Assumption~\ref{assump:approx_cond_plural}. 
Furthermore, for an eigenvector $\phi$ corresponding to $\lambda$ 
with $\| \phi \|_{2} = 1$, assume that 
there exists a positive integer $N_{0}$ such that for any $N > N_{0}$ 
\begin{align}
\frac{2}{r_{\lambda}} \| (S_{p}^{\mu} - S_{p, N}^{\mu}) \phi \|_{2} < 1.
\label{eq:A-ANphi_less_than_1_2b}
\end{align}
Then for $N > N_{0}$ 
there exist 
a choice of an index $i_{N} \in \{1,\ldots, k \}$ and 
an integer $k'_{N}$ with $2\leq k'_{N} \leq k$
such that 
\begin{align}
\left| \lambda - \lambda_{N, i_{N}} \right| 
\leq
\left\{
K_{\lambda, N}\, \| (S_{p}^{\mu} - S_{p, N}^{\mu}) \phi \|_{2}
\right\}^{1/k'_{N}}, 
\label{eq:GeneralRate2b}
\end{align}
where
\begin{align*}
K_{\lambda, N} 
& = 
(4 |\lambda| + r_{\lambda} ) 
\left[
k^2
\max \left\{ 1, \left(|\lambda| + \frac{r_{\lambda}}{2}\right)^{2k} \right\} 
\max \left\{1, \|S_{p, N}^{\mu}\|_{2}^{k} \right\} 
 \right. \\
& \left. 
\quad +\frac{2}{r_{\lambda}} 
\max \left\{ 1, \left( \| S_{p, N}^{\mu} \|_{2} + | \lambda | + \frac{r_{\lambda}}{2} \right)^{k} \right\}
\right]
+ \max \left\{1, \left( \frac{r_{\lambda}}{2} \right)^{k} \right\}.
\end{align*}
In particular, 
if there exists $N_{1} \geq N_{0}$ such that $K_{\lambda, N}\, \| (S_{p}^{\mu} - S_{p, N}^{\mu}) \phi \|_{2} < 1$ for $N > N_{1}$, 
the integer $k'_{N}$ in \eqref{eq:GeneralRate2b} can be replaced by $k$.
\end{lem}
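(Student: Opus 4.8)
The plan is to follow the strategy of Lemmas~\ref{lem:make_approx_eigenvec_2b} and~\ref{eq:StrResolEstim}, replacing the single approximate eigenvector by the projected vector $\psi_N := P_{\varLambda_N}(S_{p,N}^\mu)\phi$ from~\eqref{eq:DunfordIntP_pl} and the scalar Rayleigh-quotient identity by a polynomial (product) identity over the cluster. Write $\delta := \|(S_p^\mu - S_{p,N}^\mu)\phi\|_2$. By Lemma~\ref{lem:make_approx_eigenvec_2b} we have $\|\phi - \psi_N\|_2 \le \tfrac{2}{r_\lambda}\delta$ and, under~\eqref{eq:A-ANphi_less_than_1_2b}, $\psi_N \neq 0$ for $N>N_0$. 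Since $S_{p,N}^\mu$ is self-adjoint, $\psi_N$ lies in the cluster eigenspace $E_N := \bigoplus_i \ker(S_{p,N}^\mu - \lambda_{N,i})$ and the remainder $\eta := \phi - \psi_N = (I - P_{\varLambda_N}(S_{p,N}^\mu))\phi$ is orthogonal to $E_N$.

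Let $\theta_1,\ldots,\theta_{k'_N}$ be the \emph{distinct} values among $\lambda_{N,1},\ldots,\lambda_{N,k}$, where $k'_N$ is their number; if $k'_N=1$ the cluster collapses to a single eigenvalue and Lemma~\ref{eq:StrResolEstim} applies verbatim (with a stronger, linear bound), so I assume $2 \le k'_N \le k$. Put $P(t):=\prod_{j=1}^{k'_N}(t-\theta_j)$, a monic polynomial with real coefficients (the $\theta_j$ are real by self-adjointness), so that $P(S_{p,N}^\mu)$ is self-adjoint and annihilates $E_N$, i.e.\ $P(S_{p,N}^\mu)\psi_N = 0$. The heart of the argument is the product estimate
\[
\prod_{j=1}^{k'_N}|\lambda-\theta_j| = |P(\lambda)| \le K_{\lambda,N}\,\delta .
\]
Using $P(S_p^\mu)\phi = P(\lambda)\phi$ and $\|\phi\|_2=1$, I write $P(\lambda)=([P(S_p^\mu)-P(S_{p,N}^\mu)]\phi,\phi)_2 + (P(S_{p,N}^\mu)\phi,\phi)_2$ and bound the two terms separately.

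For the second term I insert $\phi=\psi_N+\eta$; since $P(S_{p,N}^\mu)\psi_N=0$ and $P(S_{p,N}^\mu)$ is self-adjoint, the three terms involving $\psi_N$ drop out and $(P(S_{p,N}^\mu)\phi,\phi)_2=(P(S_{p,N}^\mu)\eta,\eta)_2$, which is at most $\tfrac{4}{r_\lambda^2}\|P(S_{p,N}^\mu)\|_2\,\delta^2 \le \tfrac{2}{r_\lambda}\|P(S_{p,N}^\mu)\|_2\,\delta$ after using $\tfrac{2}{r_\lambda}\delta<1$ from~\eqref{eq:A-ANphi_less_than_1_2b}, with $\|P(S_{p,N}^\mu)\|_2 \le (\|S_{p,N}^\mu\|_2+|\lambda|+\tfrac{r_\lambda}{2})^{k'_N}$ by $|\theta_j|\le|\lambda|+\tfrac{r_\lambda}{2}$ from~\eqref{eq:3b_in}. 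For the first term I telescope
\[
P(S_p^\mu)-P(S_{p,N}^\mu)=\sum_{j=1}^{k'_N}\Bigl[\prod_{i<j}(S_{p,N}^\mu-\theta_i)\Bigr](S_p^\mu-S_{p,N}^\mu)\Bigl[\prod_{i>j}(S_p^\mu-\theta_i)\Bigr];
\]
applied to $\phi$, the rightmost product collapses to the scalar $\prod_{i>j}(\lambda-\theta_i)$ (of modulus $\le(\tfrac{r_\lambda}{2})^{k'_N-j}$) because $S_p^\mu\phi=\lambda\phi$, leaving exactly one factor $(S_p^\mu-S_{p,N}^\mu)\phi$ of norm $\delta$ preceded by bounded operators of norm $\le(\|S_{p,N}^\mu\|_2+|\lambda|+\tfrac{r_\lambda}{2})^{j-1}$. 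Collecting both bounds, crudely replacing each power by $\max\{1,(\cdot)^k\}$ and inserting $\|S_{p,N}^\mu\|_2 \le C N^{p+1/2}$ from Lemma~\ref{lem:norm_P_N_S}, produces a constant of the stated form $K_{\lambda,N}$.

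Finally, from $\min_j|\lambda-\theta_j|^{k'_N}\le\prod_j|\lambda-\theta_j|\le K_{\lambda,N}\delta$ I take the distinct value $\theta_{j_0}$ attaining the minimum and let $i_N$ be any index with $\lambda_{N,i_N}=\theta_{j_0}$, giving $|\lambda-\lambda_{N,i_N}|\le(K_{\lambda,N}\delta)^{1/k'_N}$, which is~\eqref{eq:GeneralRate2b}. For the last assertion, if $K_{\lambda,N}\delta<1$ for $N>N_1$, then $k'_N\le k$ and $0<K_{\lambda,N}\delta<1$ give $(K_{\lambda,N}\delta)^{1/k'_N}\le(K_{\lambda,N}\delta)^{1/k}$, so $k'_N$ may be replaced by $k$. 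I expect the main obstacle to be the explicit telescoping estimate of $[P(S_p^\mu)-P(S_{p,N}^\mu)]\phi$: one must exploit that the \emph{unbounded} operator $S_p^\mu$ acts only on the eigenvector $\phi$ (each occurrence reducing to the scalar $\lambda$) while keeping the bounded factors $\prod(S_{p,N}^\mu-\theta_i)$ controlled through $\|S_{p,N}^\mu\|_2$, in order to reach the precise closed form $K_{\lambda,N}$ rather than a merely qualitative bound.
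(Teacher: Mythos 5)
Your proposal is essentially sound, and its core mechanism genuinely differs from the paper's. The paper also reduces the claim to the product $\prod_{i}|\lambda-\lambda_{N,i}|$ via the minimality of the chosen index, and its term $E_{2,N}$ is your telescoping estimate in disguise; but its way of tying that product to $\delta=\|(S_p^\mu-S_{p,N}^\mu)\phi\|_2$ is to \emph{construct an actual normalized eigenvector} $\phi_N=C_N^{-1}g_N(S_{p,N}^\mu)P_{\varLambda_N}(S_{p,N}^\mu)\phi$ with $g_N(x)=\prod_{i\ge 2}(x-\lambda_{N,i})$, which requires tracking which coefficients $a_{N,i}$ in the expansion of $P_{\varLambda_N}(S_{p,N}^\mu)\phi$ are nonzero, choosing the sign in \eqref{eq:normalizing_C}, running the Rayleigh-quotient identity of Lemma~\ref{eq:StrResolEstim}, multiplying by $|g_N(\lambda)|$, and then splitting into the three errors $E_{1,N},E_{2,N},E_{3,N}$. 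You bypass the eigenvector construction entirely: the identity $P(\lambda)=([P(S_p^\mu)-P(S_{p,N}^\mu)]\phi,\phi)_2+(P(S_{p,N}^\mu)\eta,\eta)_2$ with $\eta=(I-P_{\varLambda_N}(S_{p,N}^\mu))\phi$ is legitimate (the cross terms vanish because the $\theta_j$ are real, $P(S_{p,N}^\mu)$ is self-adjoint, and the Riesz projection of the self-adjoint $S_{p,N}^\mu$ has range equal to the cluster eigenspace, so $P(S_{p,N}^\mu)\psi_N=0$), and it eliminates $E_{1,N}$ and all the coefficient bookkeeping. Your quadratic remainder even carries a factor $\delta^2$, which you correctly relax to $\delta$ using \eqref{eq:A-ANphi_less_than_1_2b}; the final passage from the product bound to \eqref{eq:GeneralRate2b}, and the replacement of $k'_N$ by $k$ when $K_{\lambda,N}\delta<1$, are both handled correctly.

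Two caveats. First, your route does not deliver the literal constant $K_{\lambda,N}$: your quadratic-remainder term is $\frac{2}{r_\lambda}\max\{1,B^k\}$ with $B=\|S_{p,N}^\mu\|_2+|\lambda|+r_\lambda/2$, whereas the corresponding piece of $K_{\lambda,N}$ carries the extra prefactor $(4|\lambda|+r_\lambda)$ inherited from the paper's Rayleigh-quotient step, so your constant exceeds the stated one whenever $4|\lambda|+r_\lambda<1$; likewise your telescoping bound $k\max\{1,B^k\}\max\{1,(r_\lambda/2)^k\}$ is not dominated by $(4|\lambda|+r_\lambda)k^2\max\{1,(|\lambda|+r_\lambda/2)^{2k}\}\max\{1,\|S_{p,N}^\mu\|_2^k\}$ in all regimes. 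What you prove is therefore \eqref{eq:GeneralRate2b} with a constant of the same structure --- still polynomial of degree $k$ in $\|S_{p,N}^\mu\|_2$, which is all that the proof of Theorem~\ref{thm:main03} uses --- but not with the stated $K_{\lambda,N}$; matching it exactly would require rebalancing your estimates. Second, the case $k'_N=1$ is not settled by saying Lemma~\ref{eq:StrResolEstim} ``applies verbatim'': the conclusion demands an exponent $1/k'_N$ with $2\le k'_N\le k$, so the linear bound must still be converted to the form $(K_{\lambda,N}\delta)^{1/k'_N}$, which needs a further (if small) argument; the paper hides the same issue behind ``without loss of generality $2\le k'_N$'', so this loose end is common to both texts.
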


\begin{proof}
If there exist $i, j \in \{1,\ldots, k \}$ with $i\neq j$ such that $\lambda_{N,i} = \lambda_{N,j}$, 
we reserve only one of them, and consequently we obtain the set of indices $\Omega_{N}$ such that
$\{ \lambda_{N,i} \mid i \in \Omega_{N}\}$ is identical to $\{ \lambda_{N, 1},\ldots , \lambda_{N, k}\}$ as a set 
and the elements of it are mutually distinct. 
Then we renumber the elements of $\{ \lambda_{N,i} \mid i \in \Omega_{N}\}$ and let it be denoted by 
$\{ \lambda_{N,1},\ldots, \lambda_{N, k'_{N}} \}$, where $k'_{N} = \#\Omega_{N}$. 
Without loss of generality we can assume that $2 \leq k'_{N}$. 

It follows from Lemma~\ref{lem:make_approx_eigenvec_2b} that
$P_{\varLambda_N}(S_{p, N}^{\mu}) \phi $ is a nonzero linear combination of eigenvectors $\phi_{N, 1},\ldots , \phi_{N, k'_{N}}$ 
corresponding to $\lambda_{N, 1},\ldots , \lambda_{N,  k'_{N}}$ for  $N>N_{0}$.
Then there exists a sequence of coefficients $a_{N, 1}, \ldots, a_{N, k'_{N}} \in \mathbf{C}$ 
with $(a_{N, 1}, \ldots, a_{N, k'_{N}} ) \neq (0,\ldots, 0)$
such that
\begin{align}
P_{\varLambda_N}(S_{p, N}^{\mu}) \phi  = a_{N,1} \phi_{N,1} + \cdots + a_{N, k'_{N}} \phi_{N, k'_{N}}
\label{eq:lin_comb_eigenvec}
\end{align}
for $N > N_{0}$. 
Here  we define $i_{N}(1)$ as a minimizer of $| \lambda - \lambda_{N, i} |$ for $i$ 
with $a_{N,i}\neq 0$ in \eqref{eq:lin_comb_eigenvec}, i.e.
\begin{align}
| \lambda - \lambda_{N, i_{N}(1)} | 
= \min \{ | \lambda - \lambda_{N, i} | \mid i: a_{N,i}\neq 0 \text{ in \eqref{eq:lin_comb_eigenvec}} \}.
\label{eq:def_min_i}
\end{align}
Moreover, we define $\{ i_{N}(2),\ldots, i_{N}(k'_{N}) \}$ as a permutation of $\{1,\ldots, k'_{N}\} \setminus \{ i_{N}(1) \}$. 
Consequently, we obtain the ordered set $\{ \lambda_{N, i_{N}(1)}, \ldots,  \lambda_{N, i_{N}(k'_{N})} \}$ for $N > N_{0}$. 
For simplicity, 
let $\lambda_{N, j}$, $\phi_{N, j}$ and $a_{N, j}$ denote 
$\lambda_{N, i_{N}(j)}$, $\phi_{N, i_{N}(j)}$ and $a_{N, i_{N}(j)}$ for $j=1,\ldots, k'_{N}$, 
respectively. 
In the following we consider \eqref{eq:lin_comb_eigenvec} under this renumbering. 

Here we begin the estimate of the convergence rate of $\lambda_{N,1}$. 
Applying the operator $(S_{p, N}^{\mu} - \lambda_{N,2} I)\cdots (S_{p, N}^{\mu} - \lambda_{N,k'_{N}} I)$ to the both side of \eqref{eq:lin_comb_eigenvec},
we have
\begin{align*}
g_{N}(S_{p, N}^{\mu})\, P_{\varLambda_N}(S_{p, N}^{\mu})\, \phi = a_{N, 1}\, g_{N}(\lambda_{N,1}) \, \phi_{N,1}, 
\end{align*}
where $g_{N}$ is the polynomial defined as
\begin{align}
g_{N}(x) = (x - \lambda_{N,2})\cdots (x - \lambda_{N,k'_{N}}).
\label{eq:tmp_g_N}
\end{align}
Because $a_{N, 1} \neq 0$ and $\lambda_{N,1} \neq \lambda_{N,i}\ (i = 2,\ldots , k'_{N})$, 
we can define a normalized eigenvector $\phi_{N}$ corresponding to $\lambda_{N,1}$ as 
\begin{align}
\phi_{N} = C_{N}^{-1}\, g_{N}(S_{p, N}^{\mu})\, P_{\varLambda_N}(S_{p, N}^{\mu})\, \phi,
\label{eq:def_phi_N_plural}
\end{align}
where
\begin{align}
& C_{N} = \mathrm{sign}(g_{N}(\lambda))\, \|\, g_{N}(S_{p, N}^{\mu})\, P_{\varLambda_N}(S_{p, N}^{\mu})\, \phi \|_{2},
\label{eq:normalizing_C} 
\end{align}
Using $\phi_{N}$ in \eqref{eq:def_phi_N_plural},
in the same manner as \eqref{eq:LamdaEstim1}, 
we can derive an estimate:
\begin{align}
|\lambda - \lambda_{N,1}|
\leq (|\lambda| + |\lambda_{N, 1}| ) \| \phi - \phi_N \|_{2} + \| (S_{p}^{\mu} - S_{p, N}^{\mu}) \phi \|_{2}. 
\label{eq:LamdaEstim1_plural}
\end{align}
Multiplying the both side of \eqref{eq:LamdaEstim1_plural} by
$| g_{N}(\lambda) |$, 
we have
\begin{align}
\prod_{i=1}^{k'_{N}} | \lambda - \lambda_{N, i} | 
\leq
(|\lambda| +  |\lambda_{N, 1}|  )\, | g_{N}(\lambda) |\, \| \phi - \phi_N \|_{2} + 
| g_{N}(\lambda) |\, \| (S_{p}^{\mu} - S_{p, N}^{\mu}) \phi \|_{2}. 
\label{eq:LamdaEstim1.5_plural}
\end{align}
Noting Assumption~\ref{assump:approx_cond_plural} and \eqref{eq:def_min_i}, 
we can deduce from \eqref{eq:LamdaEstim1.5_plural} that 
\begin{align}
| \lambda - \lambda_{N, 1} |^{k'_{N}}
\leq
(|\lambda| +  |\lambda_{N, 1}|  )\, | g_{N}(\lambda) |\, \| \phi - \phi_N \|_{2} + 
(r_{\lambda}/2)^{k'_{N}-1} \| (S_{p}^{\mu} - S_{p, N}^{\mu}) \phi \|_{2}. 
\label{eq:LamdaEstim2_plural}
\end{align}
Then what remains is to estimate $ | g_{N}(\lambda) |\, \| \phi - \phi_N \|_{2}$. 
By the triangle inequality, we have
\begin{align*}
| g_{N}(\lambda) |\, \| \phi - \phi_N \|_{2} 
& \leq 
\| g_{N}(\lambda) \phi - C_{N} \phi_{N} \|_{2} 
+
\left \| \left( C_{N} - g_{N}(\lambda) \right) \phi_N \right \|_{2}, 
\end{align*}
and
\begin{align}
& \| g_{N}(\lambda) \phi - C_{N} \phi_{N} \|_{2} \notag \\
& = 
\| 
g_{N}(S_{p}^{\mu})\, P_{\lambda} (S_{p}^{\mu})\, \phi - g_{N}(S_{p, N}^{\mu})\, P_{\varLambda_{N}} (S_{p, N}^{\mu})\, \phi  
\|_{2} \notag \\
& \leq 
\| 
\{ g_{N}(S_{p}^{\mu}) - g_{N}(S_{p, N}^{\mu}) \}\, \phi
\|_{2} 
+
\|
g_{N}(S_{p, N}^{\mu}) \{ P_{\lambda}(S_{p}^{\mu}) - P_{\varLambda_{N}}(S_{p, N}^{\mu}) \}\, \phi
\|_{2} .
\label{eq:LamdaEstim3_plural}
\end{align}
Therefore setting 
\begin{align*}
& E_{1, N}
= \| \left( C_{N} - g_{N}(\lambda) \right) \phi_N \|_{2}, \\
& E_{2, N} 
= \| \{ g_{N}(S_{p}^{\mu}) - g_{N}(S_{p, N}^{\mu}) \}\, \phi \|_{2}, \\
& E_{3, N} 
= \| g_{N}(S_{p, N}^{\mu}) \{ P_{\lambda}(S_{p}^{\mu}) - P_{\varLambda_{N}}(S_{p, N}^{\mu}) \}\, \phi \|_{2}
\end{align*}
we have
\begin{align}
| g_{N}(\lambda) |\, \| \phi - \phi_N \|_{2} 
\leq E_{1, N} + E_{2, N} + E_{3, N}. 
\label{eq:main_split_1}
\end{align}
We can estimate $E_{1, N}$, $E_{2, N}$ and $E_{3, N}$ as follows. 
Using \eqref{eq:def_phi_N_plural} and \eqref{eq:normalizing_C}, we have
\begin{align}
E_{1, N} 
& = |C_{N} - g_{N}(\lambda) | 
   = |C_{N} - \mathrm{sign}(g_{N}(\lambda)) \| g_{N}(\lambda) \phi \|_{2} | \notag \\
& = | \| C_{N} \phi_{N} \|_{2} - \| g_{N}(\lambda) \phi \|_{2} | 
   \leq \| C_{N} \phi_{N} - g_{N}(\lambda) \phi \|_{2} \notag \\
& \leq E_{2, N} + E_{3, N}, 
\label{eq:EstimE1}
\end{align}
where the last inequality is due to \eqref{eq:LamdaEstim3_plural}. 
Noting that $\phi$ is an eigenvector of $S_{p}^{\mu}$ corresponding to $\lambda$, we have
\begin{align}
\{ g_{N}(S_{p}^{\mu}) - g_{N}(S_{p, N}^{\mu}) \} \phi 
& = \{ g_{N}(\lambda) - g_{N}(S_{p, N}^{\mu}) \} \phi \notag \\
& = \sum_{i=0}^{k'_{N}-1} 
\alpha(i, \Lambda_{N})
\left( \lambda^{k'_{N} - 1 - i} I  - (S_{p, N}^{\mu})^{k'_{N} - 1 - i} \right)\, \phi \notag \\
& = \sum_{i=0}^{k'_{N}-2}
\alpha(i, \Lambda_{N})\, 
\beta(i, \lambda, S_{p, N}^{\mu})\,  (\lambda I - S_{p, N}^{\mu})\, \phi \notag \\
& = \sum_{i=0}^{k'_{N}-2} 
\alpha(i, \Lambda_{N})\, 
\beta(i, \lambda, S_{p, N}^{\mu})\, (S_{p}^{\mu} - S_{p, N}^{\mu})\, \phi, 
\label{eq:EstimE2_pre}
\end{align}
where
\(
 \alpha(i, \Lambda_{N})= (-1)^{i} \sum_{2\leq j_{1} < \cdots < j_{i} \leq k'_{N}} \lambda_{N, j_{1}}\cdots \lambda_{N, j_{i}} 
\)
and 
\(
 \beta(i, \lambda, S_{p, N}^{\mu}) = 
%\) \\
%\(
 \sum_{l = 0}^{k'_{N} - 2 - i} \lambda^{k'_{N} - 2 - i - l} (S_{p, N}^{\mu})^{l}
\). 
Therefore noting 
\begin{align*}
| \alpha(i, \Lambda_{N}) |
& \leq \left(|\lambda| + \frac{r_{\lambda}}{2}\right)^{i}
\leq \max \left\{ 1, \left(|\lambda| + \frac{r_{\lambda}}{2}\right)^{k'_{N}} \right\}, \\
\|
\beta(i, \lambda, S_{p, N}^{\mu})
\|_{2}
& \leq 
\max \left\{1, |\lambda|^{k'_{N}} \right\} 
\sum_{l = 0}^{k'_{N} - 2 - i} \|S_{p, N}^{\mu}\|_{2}^{l} \\
& \leq 
\max \left\{1, |\lambda|^{k'_{N}} \right\} \, 
k'_{N} \max \left\{1, \|S_{p, N}^{\mu}\|_{2}^{k'_{N}} \right\}
\end{align*}
and \eqref{eq:EstimE2_pre}, we have
\begin{align}
E_{2, N}
\leq 
(k'_{N})^2
\max \left\{ 1, \left(|\lambda| + \frac{r_{\lambda}}{2}\right)^{2k'_{N}} \right\} 
\max \left\{1, \|S_{p, N}^{\mu}\|_{2}^{k'_{N}} \right\}
\| (S_{p}^{\mu} - S_{p, N}^{\mu}) \phi \|_{2}.
\label{eq:EstimE2}
\end{align}
Using \eqref{eq:EstimByResol_2_2b} in Lemma~\ref{lem:make_approx_eigenvec_2b}, 
we have 
\begin{align}
E_{3, N} 
& \leq 
\left( \prod_{i = 2}^{k'_{N}} \| S_{p, N}^{\mu} - \lambda_{N, i} I  \|_{2} \right)
\| \{ P_{\lambda}(S_{p}^{\mu}) - P_{\varLambda_{N}}(S_{p, N}^{\mu}) \} \phi \|_{2} \notag \\
& \leq 
\left( \prod_{i = 2}^{k'_{N}} ( \| S_{p, N}^{\mu} \|_{2} + | \lambda_{N, i} | ) \right)
\| \phi - P_{\varLambda_{N}}(S_{p, N}^{\mu}) \phi \|_{2} \notag \\
& \leq 
\left( \| S_{p, N}^{\mu} \|_{2} + | \lambda | + \frac{r_{\lambda}}{2} \right)^{k'_{N} - 1}
\frac{2}{r_{\lambda}} \| (S_{p}^{\mu} - S_{p, N}^{\mu}) \phi \|_{2}. 
\label{eq:EstimE3}
\end{align}

Finally, noting $k'_{N} \leq k$ and 
combining 
\eqref{eq:LamdaEstim2_plural}, 
\eqref{eq:main_split_1}, 
\eqref{eq:EstimE1},
\eqref{eq:EstimE2} and
\eqref{eq:EstimE3}, 
we have 
\begin{align*}
& | \lambda - \lambda_{N, 1} |^{k'_{N}} \\
& \leq
2 (|\lambda| + | \lambda_{N,1} |) ( E_{2,N} + E_{3,N} ) + 
(r_{\lambda}/2)^{k'_{N}-1} \| (S_{p}^{\mu} - S_{p, N}^{\mu}) \phi \|_{2}. \\
& \leq 
\left[
(4 |\lambda| + r_{\lambda} ) 
\left[
k^2
\max \left\{ 1, \left(|\lambda| + \frac{r_{\lambda}}{2}\right)^{2k} \right\} 
\max \left\{1, \|S_{p, N}^{\mu}\|_{2}^{k} \right\} 
 \right. 
 \right. \\
& \left. 
 \left.
\quad +\frac{2}{r_{\lambda}} 
\max \left\{ 1, \left( \| S_{p, N}^{\mu} \|_{2} + | \lambda | + \frac{r_{\lambda}}{2} \right)^{k} \right\}
\right]
+ \max \left\{1, \left( \frac{r_{\lambda}}{2} \right)^{k} \right\}
\right] 
\| (S_{p}^{\mu} - S_{p, N}^{\mu}) \phi \|_{2}.
\end{align*}
Thus we obtain the conclusion.
\end{proof}

Here we prove Theorem~\ref{thm:main03}. 

\renewcommand{\proofname}{Proof of Theorem~\ref{thm:main03}}
\begin{proof}
In a similar manner to the proof of Theorem~\ref{thm:main01}, 
we can deduce the conclusion~\eqref{eq:main_03}
from 
Lemmas~\ref{lem:norm_P_N_S}, \ref{lem:ProjEstim_C_infty} and~\ref{eq:StrResolEstim_02}, 
and~\eqref{eq:main_04}
from
Lemmas~\ref{lem:norm_P_N_S}, \ref{lem:ProjEstim_analytic}, and~\ref{eq:StrResolEstim_02}. 
Note that $\| S_{p, N}^{\mu} \|_{2} \leq \| \hat{P}_{N} S_{p}^{\mu} \|_{2} \| \hat{P}_{N} \|_{2} = \| \hat{P}_{N} S_{p}^{\mu} \|_{2}$. 
\end{proof}
\renewcommand{\proofname}{Proof}

\subsection{Proof of Theorem~\ref{thm:local_2b_4a}}
\label{subsec:local}

To prove the theorem, 
we use a similar technique to Lemmas~\ref{lem:make_approx_eigenvec_2a} and~\ref{eq:StrResolEstim}, 
in which we exchange the roles of $\phi$ and $\phi_{N}$. 
That is, we take an eigenvector $\phi_{N}$ with $\| \phi_{N} \|_{2} = 1$ corresponding to $\lambda_{N}$, 
and using $\phi_{N}$ and the projection $P_{\lambda}(S_{p}^{\mu})$ for $\lambda \in \sigma(S_{p}^{\mu})$, 
we obtain an eigenvector $\phi$ with $\| \phi \|_{2} = 1$ corresponding to $\lambda$. 
First, we present a lemma guaranteeing the feasibility of such procedure, 
which corresponds to Lemma~\ref{lem:make_approx_eigenvec_2a}.
We omit its proof since it is proved in almost the same manner as Lemma~\ref{lem:make_approx_eigenvec_2a}.

\begin{lem}
\label{lem:make_approx_eigenvec_reverse}
Let $S_{p}^{\mu}$ satisfy 
Assumptions~\ref{assump:self-adjoint} and~\ref{assump:C_infty},  
$\lambda \in \sigma(S_{p}^{\mu})$ and $\lambda_{N} \in \sigma(S_{p, N}^{\mu})$
satisfy Assumption~\ref{assump:approx_cond_single}. 
Then for an eigenvector $\phi_{N}$ corresponding to $\lambda_{N}$ we have
\begin{align}
\| \phi_{N} - P_{\lambda}(S_{p}^{\mu}) \phi_{N} \|_{2}  
\leq 
\frac{2}{r_{\lambda}} \| (S_{p}^{\mu} - S_{p, N}^{\mu}) \phi_{N} \|_{2}.
\label{eq:EstimByResol_2_reverse}
\end{align} 
Furthermore, if $\| \phi_{N} \|_{2} = 1$ and 
\begin{align}
\frac{2}{r_{\lambda}} \| (S_{p}^{\mu} - S_{p, N}^{\mu}) \phi_{N} \|_{2} < 1, 
\label{eq:less_than_one_reverse}
\end{align}
we have $P_{\lambda}(S_{p}^{\mu}) \phi_{N} \neq 0$.
\end{lem}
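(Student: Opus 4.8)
The plan is to mirror the proof of Lemma~\ref{lem:make_approx_eigenvec_2a}, interchanging the roles of $\phi$ and $\phi_N$, of $\lambda$ and $\lambda_N$, and correspondingly of $S_p^\mu$ and $S_{p,N}^\mu$. Under Assumption~\ref{assump:approx_cond_single}, condition \eqref{eq:3a_in} places $\lambda_N$ strictly inside the circle $\varGamma_\lambda(S_p^\mu)$ (centre $\lambda$, radius $r_\lambda$), while \eqref{eq:3a_out} keeps every other element of both $\sigma(S_p^\mu)$ and $\sigma(S_{p,N}^\mu)$ outside it. Hence the single contour $\varGamma_\lambda(S_p^\mu)$ simultaneously defines the Dunford projection $P_\lambda(S_p^\mu)$ onto the $\lambda$-eigenspace of $S_p^\mu$ and the projection $P_{\lambda_N}(S_{p,N}^\mu)$ onto the $\lambda_N$-eigenspace of $S_{p,N}^\mu$. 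In particular the residue theorem gives $P_{\lambda_N}(S_{p,N}^\mu)\phi_N = \phi_N$, so that $\|\phi_N - P_\lambda(S_p^\mu)\phi_N\|_2 = \|P_{\lambda_N}(S_{p,N}^\mu)\phi_N - P_\lambda(S_p^\mu)\phi_N\|_2$.

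I would then write this last quantity as the norm of the Dunford integral $\frac{1}{2\pi \mathrm{i}}\oint_{\varGamma_\lambda(S_p^\mu)}(R_\nu(S_{p,N}^\mu) - R_\nu(S_p^\mu))\phi_N\,\mathrm{d}\nu$ and bound it, via \eqref{eq:contour_estimate_1}, by $r_\lambda \max_{\nu \in \varGamma_\lambda(S_p^\mu)}\|(R_\nu(S_{p,N}^\mu) - R_\nu(S_p^\mu))\phi_N\|_2$. The key algebraic step is the resolvent identity, now taken in the form
\[
R_\nu(S_{p,N}^\mu) - R_\nu(S_p^\mu) = (\nu I - S_p^\mu)^{-1}(S_{p,N}^\mu - S_p^\mu)(\nu I - S_{p,N}^\mu)^{-1},
\]
chosen so that the factor $(\nu I - S_{p,N}^\mu)^{-1}$ stands on the right and acts on the eigenvector $\phi_N$, collapsing to the scalar $(\nu - \lambda_N)^{-1}\phi_N$. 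This is the mirror image of the step in \eqref{eq:ResolCalc}, where it was $(\nu I - S_p^\mu)^{-1}$ acting on $\phi$ that produced $(\nu - \lambda)^{-1}$.

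The remaining estimates invoke self-adjointness and the contour geometry. By Assumption~\ref{assump:self-adjoint} and the spectral theorem, $\|(\nu I - S_p^\mu)^{-1}\|_2 = \mathrm{dist}(\nu, \sigma(S_p^\mu))^{-1}$; on the contour, $\lambda$ is the nearest point of $\sigma(S_p^\mu)$ at distance $|\nu - \lambda| = r_\lambda$ by \eqref{eq:contour_estimate_2}, while every other spectral point lies at distance $\geq 2r_\lambda$ from $\lambda$ by \eqref{eq:3a_out} and hence $\geq r_\lambda$ from $\nu$, so $\mathrm{dist}(\nu, \sigma(S_p^\mu)) = r_\lambda$. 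Meanwhile $|\nu - \lambda_N|^{-1} \leq 2/r_\lambda$ by \eqref{eq:contour_estimate_3}. Multiplying the three factors gives $\|(R_\nu(S_{p,N}^\mu) - R_\nu(S_p^\mu))\phi_N\|_2 \leq (2/r_\lambda^2)\,\|(S_p^\mu - S_{p,N}^\mu)\phi_N\|_2$, and the contour-length factor $r_\lambda$ then yields exactly \eqref{eq:EstimByResol_2_reverse}. The ``furthermore'' clause follows at once: if $\|\phi_N\|_2 = 1$ and \eqref{eq:less_than_one_reverse} holds, then $\|P_\lambda(S_p^\mu)\phi_N\|_2 \geq \|\phi_N\|_2 - \|\phi_N - P_\lambda(S_p^\mu)\phi_N\|_2 > 0$, so $P_\lambda(S_p^\mu)\phi_N \neq 0$.

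I expect the only genuinely delicate point to be the bookkeeping of which operator supplies the eigenvector simplification and which supplies the self-adjoint resolvent-norm bound: here the eigenvector factor contributes $|\nu - \lambda_N|^{-1} \leq 2/r_\lambda$ (the approximate eigenvalue), whereas the resolvent-norm factor contributes $\mathrm{dist}(\nu,\sigma(S_p^\mu))^{-1} = 1/r_\lambda$ (the exact operator), the exact reverse of Lemma~\ref{lem:make_approx_eigenvec_2a}. Because the ``in'' and ``out'' conditions of Assumption~\ref{assump:approx_cond_single} constrain $\sigma(S_p^\mu)$ and $\sigma(S_{p,N}^\mu)$ symmetrically, the two numerical factors simply trade places and the final constant $2/r_\lambda$ is unchanged; confirming this symmetry is the crux of asserting that the argument really does proceed ``in almost the same manner''.
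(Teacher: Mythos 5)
Your proposal is correct and is precisely the argument the paper intends: the paper omits this proof, stating it proceeds ``in almost the same manner as Lemma~\ref{lem:make_approx_eigenvec_2a},'' and your mirrored version --- using $P_{\lambda_N}(S_{p,N}^{\mu})\phi_N = \phi_N$ via the residue theorem, the reversed resolvent identity so that $(\nu I - S_{p,N}^{\mu})^{-1}$ collapses on $\phi_N$ to $(\nu-\lambda_N)^{-1}$, and self-adjointness of $S_p^{\mu}$ to get $\|(\nu I - S_p^{\mu})^{-1}\|_2 = \mathrm{dist}(\nu,\sigma(S_p^{\mu}))^{-1} \leq 1/r_{\lambda}$ --- is exactly that argument, with the factors $1/r_{\lambda}$ and $2/r_{\lambda}$ correctly trading places to yield the same constant $2/r_{\lambda}$.
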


Next, we present a counterpart of Lemma~\ref{eq:StrResolEstim}, 
whose proof is also omitted. 

\begin{lem}\label{eq:StrResolEstim_reverse}
Let $S_{p}^{\mu}$ satisfy 
Assumptions~\ref{assump:self-adjoint} and~\ref{assump:C_infty},  
$\lambda \in \sigma(S_{p}^{\mu})$ and $\lambda_{N} \in \sigma(S_{p, N}^{\mu})$
satisfy Assumption~\ref{assump:approx_cond_single}. 
Furthermore, for an eigenvector $\phi_{N}$ 
corresponding to $\lambda_{N}$ 
with $\| \phi_{N} \|_{2} = 1$, assume that 
\begin{align}
\frac{2}{r_{\lambda}} \| (S_{p}^{\mu} - S_{p, N}^{\mu}) \phi_{N} \|_{2} < 1.
\label{eq:A-ANphi_less_than_1_reverse}
\end{align}
Then we have
\begin{align}
| \lambda - \lambda_{N} | 
\leq
\left(
3 + \frac{8 |\lambda|}{r_{\lambda}}
\right)
\| (S_{p}^{\mu} - S_{p, N}^{\mu}) \phi_{N} \|_{2}. 
\label{eq:GeneralRate_reverse}
\end{align} 
\end{lem}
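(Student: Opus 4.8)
The plan is to run the argument for Lemma~\ref{eq:StrResolEstim} essentially verbatim, but with the roles of the exact and approximate objects interchanged: here the normalized eigenvector $\phi_{N}$ of $S_{p, N}^{\mu}$ is given, and I would manufacture from it a normalized exact eigenvector $\phi$ of $S_{p}^{\mu}$. Concretely, under hypothesis~\eqref{eq:A-ANphi_less_than_1_reverse}, Lemma~\ref{lem:make_approx_eigenvec_reverse} guarantees $P_{\lambda}(S_{p}^{\mu})\phi_{N} \neq 0$, so I would set
\[
\phi = \frac{1}{\| P_{\lambda}(S_{p}^{\mu}) \phi_{N} \|_{2}}\, P_{\lambda}(S_{p}^{\mu}) \phi_{N},
\]
which lies in the range of the spectral projection $P_{\lambda}(S_{p}^{\mu})$. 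Since the contour $\varGamma_{\lambda}(S_{p}^{\mu})$ encloses only $\lambda$ among the elements of $\sigma(S_{p}^{\mu})$, that range is precisely the eigenspace for $\lambda$, so $\phi$ is a genuine eigenvector of $S_{p}^{\mu}$ for $\lambda$ with $\| \phi \|_{2} = 1$. Both $\phi$ and $\phi_{N}$ are then normalized eigenvectors of self-adjoint operators, giving the Rayleigh-quotient identities $\lambda = (S_{p}^{\mu}\phi, \phi)_{2}$ and $\lambda_{N} = (S_{p, N}^{\mu}\phi_{N}, \phi_{N})_{2}$, which serve as the starting point exactly as in~\eqref{eq:LamdaEstim0}.

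The core step is the inner-product decomposition
\begin{align*}
\lambda - \lambda_{N}
= (S_{p, N}^{\mu}\phi_{N}, \phi - \phi_{N})_{2}
+ ((S_{p}^{\mu} - S_{p, N}^{\mu})\phi_{N}, \phi)_{2}
+ (\phi - \phi_{N}, S_{p}^{\mu}\phi)_{2},
\end{align*}
the mirror image of the decomposition used for Lemma~\ref{eq:StrResolEstim}, now arranged so that the difference operator acts on $\phi_{N}$ rather than on $\phi$. I would verify it by expanding each inner product and cancelling the cross terms using the self-adjointness of $S_{p}^{\mu}$ in the first and third summands together with the two eigenvalue identities. This bookkeeping---ensuring that $(S_{p}^{\mu} - S_{p, N}^{\mu})$ lands on $\phi_{N}$ while every other term collapses---is the one place where the exchange of roles relative to Lemma~\ref{eq:StrResolEstim} is genuinely exploited, and I expect it to be the main (though routine) obstacle; note in particular that self-adjointness of $S_{p, N}^{\mu}$ is \emph{not} needed in this orientation, since the difference operator is absorbed before any adjoint of the approximate operator would be invoked.

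From the decomposition, the Cauchy--Schwarz inequality combined with $\| S_{p, N}^{\mu}\phi_{N} \|_{2} = |\lambda_{N}|$ and $\| S_{p}^{\mu}\phi \|_{2} = |\lambda|$ yields $|\lambda - \lambda_{N}| \leq (|\lambda| + |\lambda_{N}|)\| \phi - \phi_{N} \|_{2} + \| (S_{p}^{\mu} - S_{p, N}^{\mu})\phi_{N} \|_{2}$, the analogue of~\eqref{eq:LamdaEstim1}. I would then bound $\| \phi - \phi_{N} \|_{2} \leq 2\| \phi_{N} - P_{\lambda}(S_{p}^{\mu})\phi_{N} \|_{2}$ by the same normalization estimate as before (a triangle-inequality argument applied to the definition of $\phi$), and feed in~\eqref{eq:EstimByResol_2_reverse} of Lemma~\ref{lem:make_approx_eigenvec_reverse} to obtain $\| \phi - \phi_{N} \|_{2} \leq (4/r_{\lambda})\| (S_{p}^{\mu} - S_{p, N}^{\mu})\phi_{N} \|_{2}$. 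Substituting this and using $|\lambda_{N}| \leq |\lambda| + r_{\lambda}/2$ from~\eqref{eq:3a_in} in Assumption~\ref{assump:approx_cond_single} collapses the prefactor $4(|\lambda| + |\lambda_{N}|)/r_{\lambda} + 1$ to exactly $3 + 8|\lambda|/r_{\lambda}$, giving~\eqref{eq:GeneralRate_reverse}.
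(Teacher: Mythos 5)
Your proposal is correct and is exactly the argument the paper intends: the paper omits this proof, stating it follows Lemmas~\ref{lem:make_approx_eigenvec_2a} and~\ref{eq:StrResolEstim} with the roles of $\phi$ and $\phi_{N}$ exchanged, which is precisely what you carry out (mirrored decomposition, the $\|\phi-\phi_{N}\|_{2}\leq 2\|\phi_{N}-P_{\lambda}(S_{p}^{\mu})\phi_{N}\|_{2}$ normalization bound, insertion of \eqref{eq:EstimByResol_2_reverse}, and $|\lambda_{N}|\leq|\lambda|+r_{\lambda}/2$ to reach $3+8|\lambda|/r_{\lambda}$). Your side observation that the cancellation in this orientation uses self-adjointness of $S_{p}^{\mu}$ rather than $S_{p,N}^{\mu}$ is also accurate.
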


Here we prove Theorem~\ref{thm:local_2b_4a}.

\renewcommand{\proofname}{Proof of Theorem~\ref{thm:local_2b_4a}}
\begin{proof}
It follows from the assumption~\eqref{eq:local_2b}, 
Theorems~\ref{thm:no-spurious-mode} and~\ref{thm:all_lambda_produced}
that the set of the accumulation points of the sequence $\{ \lambda_{N} \}$ 
coincides with $\sigma(S_{p}^{\mu}) \cap B_{\zeta}(r)$. 
Moreover, 
the elements of $\sigma(S_{p}^{\mu}) \setminus B_{\zeta}(r)$ 
do not exist in the interior of $B_{\zeta}(9r)$. 
In the following we set $\varLambda_{\zeta, r} = \sigma(S_{p}^{\mu}) \cap B_{\zeta}(r)$. 

First, we show that $\{ \lambda_{N} \}$ is convergent and 
$\lambda \in \varLambda_{\zeta, r}$ 
is uniquely determined as the limit of $\{ \lambda_{N} \}$. 
To prove $\{ \lambda_{N} \}$ is Cauchy, 
we choose an arbitrary integer $M>0$ and estimate 
$| \lambda_{N+M} - \lambda_{N} |$ for sufficiently large $N$. 
For an eigenvalue $\lambda \in \varLambda_{\zeta, r}$, 
we take an eigenvector $\phi$ with $\| \phi \|_{2} = 1$ corresponding to $\lambda$. 
Then $\phi$ satisfies
\begin{align}
P_{\varLambda_{\zeta, r}}(S_{p}^{\mu}) \phi 
= \frac{1}{2\pi \mathrm{i}} \oint_{\varGamma_{\zeta, r}} (\nu I - S_{p}^{\mu})^{-1} \phi\, \mathrm{d}\nu
= \frac{1}{2\pi \mathrm{i}} \oint_{\varGamma_{\zeta, r}} (\nu - \lambda)^{-1} \phi\, \mathrm{d}\nu 
= \phi,
\label{eq:projection}
\end{align}
where $\varGamma_{\zeta, r}$ is the boundary of $B_{\zeta}(2r)$ with counterclockwise direction.
Note that \eqref{eq:projection} holds true regardless of the number of the elements in 
$\varLambda_{\zeta, r}$. 
Using this $\phi$, we define eigenvectors corresponding to $\lambda_{N+M}$ and $\lambda_{N}$ as
\begin{align*}
& \phi_{N+M} = \frac{1}{ \| P_{\lambda_{N+M}}(S_{p, N+M}^{\mu}) \phi  \|_{2} } P_{\lambda_{N+M}}(S_{p, N+M}^{\mu}) \phi \\
\intertext{and}
& \phi_{N} = \frac{1}{ \| P_{\lambda_{N}}(S_{p, N}^{\mu}) \phi  \|_{2} } P_{\lambda_{N}}(S_{p, N}^{\mu}) \phi, 
\end{align*}
respectively. Here we use again $\varGamma_{\zeta, r}$ to define 
$P_{\lambda_{N+M}}(S_{p, N+M}^{\mu})$ and $P_{\lambda_{N}}(S_{p, N}^{\mu})$. 
The availability of the eigenvectors can be guaranteed by a similar manner to Lemma~\ref{lem:make_approx_eigenvec_2a}. 
Then noting $S_{p, N}^{\mu} = \hat{P}_{N} S_{p}^{\mu} \hat{P}_{N}$ etc.~we have
\begin{align*}
\lambda_{N+M} - \lambda_{N} 
& = (S_{p, N+M}^{\mu} \phi_{N+M}, \phi_{N+M}) - (S_{p, N+M}^{\mu} \phi_{N}, \phi_{N}) \\
& = (S_{p, N+M}^{\mu} \phi_{N+M}, \phi_{N+M} - \phi_{N} ) + (S_{p, N+M}^{\mu} (\phi_{N+M} - \phi_{N}), \phi_{N}) \\
& = (S_{p, N+M}^{\mu} \phi_{N+M}, \phi_{N+M} - \phi_{N} ) + ( \phi_{N+M} - \phi_{N}, S_{p, N+M}^{\mu} \phi_{N}) 
\end{align*}
and therefore
\begin{align}
| \lambda_{N+M} - \lambda_{N} |
& \leq 2 \| S_{p, N+M}^{\mu} \|_{2} (\| \phi_{N+M} - \phi \|_{2}  + \| \phi - \phi_{N} \|_{2} ). 
\label{eq:Cauchy_1}
\end{align}
Using a similar manner to Lemma~\ref{eq:StrResolEstim}, 
Lemma~\ref{lem:norm_P_N_S} and 
Lemma~\ref{lem:ProjEstim_C_infty}, 
we can show that the RHS of \eqref{eq:Cauchy_1} is 
$\mathrm{O}(N^{-q})$ for any positive integer $q$. 
In fact, $\| S_{p, N+M}^{\mu} \|_{2} = \mathrm{O}(N^{p+1/2})$ by Lemma~\ref{lem:norm_P_N_S} and 
\begin{align*}
\| \phi - \phi_{N} \|_{2} 
& \leq 2\, \| \phi - P_{\lambda_{N}}(S_{p, N}^{\mu}) \phi \|_{2}
= 2\, \| P_{\varLambda_{\zeta, r}}(S_{p}^{\mu}) \phi - P_{\lambda_{N}}(S_{p, N}^{\mu}) \phi \|_{2} \\
& \leq 4r \max_{\nu \in \varGamma_{\zeta, r}} 
\| (\nu I - S_{p, N}^{\mu})^{-1} (S_{p, N}^{\mu} - S_{p}^{\mu}) (\nu I - S_{p}^{\mu})^{-1} \phi \|_{2}\\
& = 4r \max_{\nu \in \varGamma_{\zeta, r}} 
\| (\nu I - S_{p, N}^{\mu})^{-1} (S_{p, N}^{\mu} - S_{p}^{\mu}) (\nu - \lambda)^{-1} \phi \|_{2} \\
& \leq 4r \max_{\nu \in \varGamma_{\zeta, r}} 
\frac{1}{| \nu - \lambda |} \frac{1}{| \nu - \lambda_{N} |} \| (S_{p, N}^{\mu} - S_{p}^{\mu}) \phi \|_{2} \\
& \leq \frac{4}{r} \| (S_{p, N}^{\mu} - S_{p}^{\mu}) \phi \|_{2}.
\end{align*}
Thus we can show that $| \lambda_{N+M} - \lambda_{N} | \to 0$ as $N \to \infty$ 
and $\lambda \in \varLambda_{\zeta, r}$ is uniquely determined. 
Combining this fact and $|\lambda - \lambda_{N}| \leq 2r$, 
we can show that Assumption~\ref{assump:approx_cond_single}
is satisfied for $\{ \lambda_{N} \}$, $\lambda$, and $r_{\lambda} = 4r$. 
In fact, for any 
$\tilde{\lambda} \in \sigma(S_{p}^{\mu}) \cup \sigma(S_{p, N}^{\mu}) \setminus \{ \lambda, \lambda_{N} \}$
we have
\[
| \tilde{\lambda} - \lambda | 
\geq 
| \tilde{\lambda} - \zeta | - | \lambda - \zeta |
\geq 
2r_{\lambda}. 
\]

Next, we show the error bound~\eqref{eq:bound}. 
By Lemma~\ref{eq:StrResolEstim_reverse}, 
it follows from \eqref{eq:GeneralRate_reverse} and $| \lambda | \leq r + |\zeta|$ that 
\begin{align}
| \lambda - \lambda_{N} | 
\leq
\left(
5 + \frac{3 |\zeta|}{r}
\right)
\| (S_{p}^{\mu} - S_{p, N}^{\mu}) \phi_{N} \|_{2}
\label{eq:pre_bound_1}
\end{align}
for $\lambda \in \sigma(S_{p}^{\mu}) \cap B_{\zeta}(r)$.
Furthermore, we have 
\begin{align*}
& (S_{p}^{\mu} - S_{p, N}^{\mu}) \phi_{N} 
=
( I - \hat{P}_{N}) S_{p}^{\mu} \phi_{N} \notag \\
& = 
\sum_{|l| > N} 
\left\{
\sum_{j=0}^{p} \sum_{m = l - N}^{l + N} 
(\hat{f}_{j})_{m} (\hat{\phi}_{N})_{l-m} \frac{1}{\sqrt{L}} \left( -\mathrm{i}\frac{2\pi (l-m)}{L} \right)^{j}
\right\}
\frac{1}{\sqrt{L}} \exp\left( -\mathrm{i} \frac{2\pi l}{L} x \right), 
\end{align*}
and therefore
\begin{align}
\| (S_{p}^{\mu} - S_{p, N}^{\mu}) \phi_{N} \|_{2}
& \leq
\sum_{|l| > N} 
\left|
\sum_{j=0}^{p} \sum_{m = l - N}^{l + N} 
(\hat{f}_{j})_{m} (\hat{\phi}_{N})_{l-m} \frac{1}{\sqrt{L}} \left( -\mathrm{i}\frac{2\pi (l-m)}{L} \right)^{j}
\right| \notag \\
& \leq 
\frac{1}{\sqrt{L}} \left( \frac{2\pi N}{L} \right)^{p}
\sum_{j=0}^{p} 
\sum_{|l| > N} 
\sum_{m = l - N}^{l + N} 
\left| (\hat{f}_{j})_{m} (\hat{\phi}_{N})_{l-m} \right|.
\label{eq:pre_bound_2}
\end{align}
Considering the case $|l| \geq 2N$, we have
\begin{align}
\sum_{|l| \geq 2N} \sum_{m = l - N}^{l + N} 
\left| (\hat{f}_{j})_{m} (\hat{\phi}_{N})_{l-m} \right|
\leq 
\sum_{|l| \geq 2N} \sum_{m = l - N}^{l + N} 
\left| (\hat{f}_{j})_{m} \right|
\leq 
(2N+1)
\sum_{|m| \geq N} 
\left| (\hat{f}_{j})_{m} \right|.
\label{eq:pre_bound_3}
\end{align}
Combining~\eqref{eq:pre_bound_1}, \eqref{eq:pre_bound_2} and~\eqref{eq:pre_bound_3}, 
we obtain the error bound~\eqref{eq:bound}. 
\end{proof}
\renewcommand{\proofname}{Proof}

\section{Concluding Remarks}
\label{sec:ConcRem}

We have considered the convergence rates and the explicit error bounds
of Hill's method, which is a numerical method for computing the spectra of  
self-adjoint differential operators $S_{p}$ in~\eqref{eq:main_prob} with periodic coefficient functions $\tilde{f}_{j}$. 
On the assumption (Assumptions~\ref{assump:approx_cond_single} and~\ref{assump:approx_cond_plural})
that the computed eigenvalue $\lambda_{N}$ is  close to the exact one $\lambda$, 
it is shown in Theorems~\ref{thm:main01} and~\ref{thm:main03} 
that the convergence rate of the computed eigenvalue is all order polynomial
in the case of $\tilde{f}_{j}$ being $C^{\infty}$ as shown in~\eqref{eq:main_01} and~\eqref{eq:main_03}, and exponential
in the case of $\tilde{f}_{j}$ being analytic as shown in~\eqref{eq:main_02} and~\eqref{eq:main_04}. 
In addition, even if the exact eigenvalue $\lambda$ is unknown, 
it is shown in Theorems~\ref{thm:main01}, \ref{thm:main03} and~\ref{thm:local_2b_4a}
that, if the condition \eqref{eq:local_2b} is satisfied, 
we can obtain the convergence rate using \eqref{eq:main_01}--\eqref{eq:main_04}
and the explicit error bound using \eqref{eq:bound}.
There are some cases in which the condition \eqref{eq:local_2b} can be checked using 
Gershgorin's theorem. 
These theorems are proved using the Dunford integrals $P_{\lambda}(S_{p, N}^{\mu})$ in \eqref{eq:DunfordIntP} 
and $P_{\lambda}(S_{p}^{\mu})$ in the proof of Theorem~\ref{thm:local_2b_4a}, 
which project an eigenvector to the corresponding eigenspace. 
This integral is suitable for proving these theorems, 
because the boundary of the neighborhood of an eigenvalue can be directly 
used as the contour of the integral. 
Numerical examples using Hill's operator \eqref{eq:HillEqSn} support these theoretical results. 

%  coefficients of the operators 
% is and the one in the case of analytic coefficients is 
% which is consistent with the numerical results.
% Assumptions~\ref{assump:approx_cond_single} and~\ref{assump:approx_cond_plural}
% give the proximity conditions of the approximate eigenvalues to the exact one, 
% and specify the situations in which these estimates are valid. 
% In particular, Assumption~\ref{assump:approx_cond_plural} is useful because
% it gives the condition in the case that more than one sequence of the approximate eigenvalues are detected. 
% Moreover, by Gershgorin's theorem, these assumptions can be verified  
% and the explicit error bounds can be obtained in some tractable cases. 
% To show the results above, we adopt the projection to an eigenspace 
% by the Dunford integral, since it is well-suited to 
% Assumptions~\ref{assump:approx_cond_single} and~\ref{assump:approx_cond_plural}
% in that these assumptions give the contour for the integral. 

% Here we point out some future works. 
% As stated in Section~\ref{sec:NumComp}, 
% improvement of the separation of the approximate eigenvalues and 
% modification of Hill's method may be necessary 
% to treat operators with long-period coefficients. 

As described in Remark \ref{rem:SelfAd}, 
self-adjointness of the operator $S_{p}^{\mu}$ enables us to estimate 
the norm of the resolvent as shown in \eqref{eq:ResolCalc}, 
and to project an eigenvector to the corresponding eigenspace using the Dunford integral.
For the case of non-self-adjoint operators, 
we have developed some other approaches, 
which will be reported somewhere else soon. 

% In fact, using the approach in this paper, 
% we have obtained similar estimates for some special non-self-adjoint operators, 
% which will be reported somewhere else soon. 

\section*{Acknowledgments}

This work is partially supported by Grant-in-Aid for Scientific Research (S) 20224001.

\appendix 

\section{Proofs of Lemmas~\ref{lem:norm_P_N_S}, \ref{lem:ProjEstim_C_infty}, and~\ref{lem:ProjEstim_analytic}}
\label{sec:ProofOrders}

First, we prove Lemma~\ref{lem:norm_P_N_S}.

\renewcommand{\proofname}{Proof of Lemma~\ref{lem:norm_P_N_S}}
\begin{proof}
We set $f_{p} \equiv 1$ for simplicity. 
Let $\mathrm{e}_{m}\ (m=0, \pm 1, \pm 2, \ldots)$ be the Fourier bases defined as
\[
\mathrm{e}_{m}(x) = \frac{1}{\sqrt{L}} \exp \left( -\mathrm{i} \frac{2\pi m}{L} x \right), 
\]
and let $(\hat{f}_j)_{m}\ (j=0,1,\ldots, p,\ m=0, \pm 1, \pm 2, \ldots)$ 
denote the Fourier coefficients of $f_j\ (j=0,1,\ldots, p)$, i.e.~$(\hat{f}_j)_{m} = (f_{j},\, \mathrm{e}_{m})_{2}$. 
Then it is a standard fact that 
under Assumption~\ref{assump:C_infty}, 
for $j = 0,1,\ldots, p$, $m=0, \pm 1, \pm 2, \ldots$ and any positive integer $q$, 
there exists a positive real constant $C_{j, q}$ 
depending only on $f_{j}$ and $q$
such that 
\begin{align}
|(\hat{f}_j)_{m}| \leq C_{j, q}\, ( 1 + |m| )^{-q}.
\label{eq:C_infty_Fourier_decay} 
\end{align}

We estimate the norm of 
\begin{align}
(\hat{P}_N S_{p}^{\mu} \psi)(x) = 
\sum_{n = -N}^{N} 
\sum_{j=0}^{p}
\left\{ \sum_{m=-\infty}^{\infty} 
(\hat{f}_j)_{m} \left( -\mathrm{i} \frac{2\pi (n-m) }{L} \right)^{j} \hat{\psi}_{n-m}  
\right\} \mathrm{e}_n(x)
\label{eq:PApsi_Fourier}
\end{align}
for $N\geq 1$ and a nonzero vector $\psi \in L_2([0,L])_{\mathrm{per}}$. 
Setting 
\begin{align*}
c_j(n,m) =  (\hat{f}_j)_{m} \left( -\mathrm{i} \frac{2\pi (n-m) }{L} \right)^{j}
\end{align*}
and $q=p+1$ in \eqref{eq:C_infty_Fourier_decay}, 
for $j = 0,1,\ldots, p$, $m=0, \pm 1, \pm 2, \ldots$ and $n$ with $|n| \leq N$, we have
\begin{align*}
N^{-j} | c_j(n,m) |
& = \left( \frac{2\pi }{L} \right)^{j} | (\hat{f}_j)_{m} |  \left( \frac{|n| + |m|}{N} \right)^{j} \notag \\
& \leq \left( \frac{2\pi }{L} \right)^{j} C_{j, p+1}\, ( 1 + |m| ) ^{-p-1} \left( 1 + \frac{|m|}{N} \right)^{j} \notag \\
& \leq \left( \frac{2\pi }{L} \right)^{j} C_{j, p+1}\, ( 1 + |m| )^{j - p - 1}, 
\end{align*}
and therefore
\begin{align}
 | c_j(n,m) | \leq \left( \frac{2\pi }{L} \right)^{j} \frac{C_{j, p+1}}{1 + |m|}\, N^{j}.
 \label{eq:Fourier_coeff_estim}
\end{align}
Combining \eqref{eq:PApsi_Fourier} and \eqref{eq:Fourier_coeff_estim} we have
\begin{align}
\| \hat{P}_N S_{p}^{\mu} \psi \|_2
& \leq \sum_{j=0}^{p} 
\left\| \sum_{n=-N}^{N} \left( \sum_{m=-\infty}^{\infty} c_j(n,m) \hat{\psi}_{n-m} \right) \mathrm{e}_n \right\|_2 \notag \\
& = \sum_{j=0}^{p} \left\{ \sum_{n=-N}^{N} \left| \sum_{m=-\infty}^{\infty} c_j(n,m) \hat{\psi}_{n-m} \right|^2 \right\}^{1/2} \notag \\
& \leq \sum_{j=0}^{p} 
\left\{ 
\sum_{n=-N}^{N} \left( \sum_{m=-\infty}^{\infty} |c_j(n,m)|^2 \right) \left( \sum_{m=-\infty}^{\infty} |\hat{\psi}_{n-m}|^2 \right) 
\right\}^{1/2} \notag \\
& \leq \sum_{j=0}^{p} 
\left\{ 
\sum_{n=-N}^{N} \left( \frac{2\pi }{L} \right)^{2j} C_{j, p+1}^2 N^{2j} 
\left( \sum_{m=-\infty}^{\infty} \frac{1}{(1+|m|)^2} \right) \|\psi\|_2^2
\right\}^{1/2} \notag \\
& \leq  \sum_{j=0}^{p} \tilde{C}_{j, p+1} (2N+1)^{1/2} N^j \|\psi\|_2, 
\label{eq:PNApsi_estim}
\end{align}
where $ \tilde{C}_{j, p+1}\ (j=0,1,\ldots, p)$ are bounded constants defined as
\[
 \tilde{C}_{j, p+1} = 
 \left( \frac{2\pi }{L} \right)^{j} C_{j, p+1}
 \left( \sum_{m=-\infty}^{\infty} \frac{1}{(1+|m|)^2} \right)^{1/2}.
\]
Hence it follows from \eqref{eq:PNApsi_estim} that
\begin{align}
\| \hat{P}_N S_{p}^{\mu}  \|_2 \leq \sum_{j=0}^{p} \tilde{C}_{j, p+1} (2N+1)^{1/2} N^j.
\label{eq:PNA_estim}
\end{align}
Thus we have the conclusion.
\end{proof}
\renewcommand{\proofname}{Proof}

Next, we prove Lemma~\ref{lem:ProjEstim_C_infty} and~\ref{lem:ProjEstim_analytic}
with the aid of the following two lemmas. 
Since these lemmas are fundamental in the theory of differential equations, 
we only give the outlines of their proofs in Appendix~\ref{sec:ProofEigenSmooth}.

\begin{lem}
\label{lem:EigenSmooth_C_infty}
Let $S_{p}^{\mu}$ satisfy Assumption~\ref{assump:C_infty} and 
$\phi \in H^{p}([0,L])_{\mathrm{per}}$ be an eigenvector of $S_{p}^{\mu}$ corresponding to 
$\lambda \in \sigma(S_{p}^{\mu} )$.
Then $\phi$ is $C^{\infty}$. 
\end{lem}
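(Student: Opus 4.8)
The plan is to exploit the differential equation satisfied by $\phi$ together with a regularity bootstrap. Rewriting $S_{p}^{\mu}\phi = \lambda \phi$ and isolating the top-order derivative using the explicit form \eqref{eq:Expr_S_p_mu}, I would write
\begin{align}
\phi^{(p)} = \lambda \phi - \sum_{j=0}^{p-1} f_{j}\, \phi^{(j)}.
\label{eq:bootstrap_eq}
\end{align}
The governing observation is that, under Assumption~\ref{assump:C_infty}, each $f_{j}$ is a $C^{\infty}$ periodic function, so multiplication by $f_{j}$ is a bounded operator from $H^{s}([0,L])_{\mathrm{per}}$ into itself for every integer $s \geq 0$; this follows from the Leibniz rule, since all derivatives of $f_{j}$ are bounded on the period.

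Starting from the hypothesis $\phi \in H^{p}([0,L])_{\mathrm{per}}$, I would then bootstrap the regularity one order at a time. Suppose $\phi \in H^{s}([0,L])_{\mathrm{per}}$ for some integer $s \geq p$. Then $\phi^{(j)} \in H^{s-j}([0,L])_{\mathrm{per}}$ and hence $f_{j}\phi^{(j)} \in H^{s-j}([0,L])_{\mathrm{per}}$ for $j = 0,\ldots, p-1$; the term of lowest regularity is $f_{p-1}\phi^{(p-1)} \in H^{s-p+1}([0,L])_{\mathrm{per}}$, while $\lambda\phi \in H^{s}([0,L])_{\mathrm{per}}$. Consequently the right-hand side of \eqref{eq:bootstrap_eq} lies in $H^{s-p+1}([0,L])_{\mathrm{per}}$, which forces $\phi^{(p)} \in H^{s-p+1}([0,L])_{\mathrm{per}}$ and therefore $\phi \in H^{s+1}([0,L])_{\mathrm{per}}$. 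Thus each step gains exactly one order of Sobolev regularity.

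Iterating this argument inductively yields $\phi \in H^{s}([0,L])_{\mathrm{per}}$ for every integer $s \geq p$, hence $\phi \in \bigcap_{s} H^{s}([0,L])_{\mathrm{per}}$. The one-dimensional Sobolev embedding on the period, $H^{s}([0,L])_{\mathrm{per}} \hookrightarrow C^{s-1}$, then gives $\phi \in C^{\infty}$, which is the claim.

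I expect the only delicate point to be the verification that multiplication by the smooth coefficients $f_{j}$ preserves each periodic Sobolev space, together with the bookkeeping ensuring the regularity genuinely increases at each step; once that is in place, the induction and the final Sobolev embedding are routine. Since the statement is standard in the theory of differential equations, I would present only the outline above, deferring the elementary estimates to Appendix~\ref{sec:ProofEigenSmooth} as the paper indicates.
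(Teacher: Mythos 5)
Your proposal is correct and follows essentially the same route as the paper's proof: isolate $\phi^{(p)}$ from the eigenvalue equation, bootstrap the Sobolev regularity one order at a time (the paper does exactly the step $\phi \in H^{p} \Rightarrow \phi^{(p)} \in H^{1} \Rightarrow \phi \in H^{p+1}$ and then iterates), and conclude via the embedding $H^{q}([0,L])_{\mathrm{per}} \hookrightarrow C^{q-1}([0,L])_{\mathrm{per}}$. Your additional remark that multiplication by a smooth periodic function is bounded on each $H^{s}([0,L])_{\mathrm{per}}$ is precisely the implicit ingredient the paper's terser argument relies on, so nothing is missing.
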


\begin{lem}
\label{lem:EigenSmooth_analytic}
Let $S_{p}^{\mu}$ satisfy Assumption~\ref{assump:analytic} for $d>0$ and 
$\phi \in H^{p}([0,L])_{\mathrm{per}}$ be an eigenvector of $S_{p}^{\mu}$ 
corresponding to $\lambda \in \sigma(S_{p}^{\mu} )$.
Then $\phi$ can be uniquely extended to an analytic function on $\mathcal{D}_{d}$
with $\phi(z + L) = \phi(z)$ for any $z \in \mathbf{C}$, 
where $\mathcal{D}_{d}$ is defined as \eqref{eq:DefD_d} and 
$L$ is the period of the coefficient functions of $S_{p}^{\mu}$. 
\end{lem}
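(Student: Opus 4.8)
The plan is to read the eigenvalue equation $S_p^\mu \phi = \lambda \phi$ as a linear homogeneous ordinary differential equation with holomorphic coefficients and to apply the classical existence theorem for such equations on the simply connected strip $\mathcal{D}_d$. First I would note that, since Assumption~\ref{assump:analytic} implies Assumption~\ref{assump:C_infty}, Lemma~\ref{lem:EigenSmooth_C_infty} guarantees that $\phi$ is $C^\infty$ on $\mathbf{R}$ and hence a classical (not merely weak) solution of
\[
\phi^{(p)}(x) + \sum_{j=0}^{p-1} f_j(x)\, \phi^{(j)}(x) = \lambda\, \phi(x), \qquad x \in \mathbf{R}.
\]
Rewriting this in the monic form $\phi^{(p)} = \lambda \phi - \sum_{j=0}^{p-1} f_j \phi^{(j)}$, the leading coefficient is the constant $1$, so the equation has no singular points; by Assumption~\ref{assump:analytic} the coefficients $f_0, \ldots, f_{p-1}$ are holomorphic and $L$-periodic on $\mathcal{D}_d$.

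The key step is then to invoke the Cauchy existence theorem for linear ODEs with holomorphic coefficients (see e.g.\ \cite{bib:Hille_ODE_complex}): on any simply connected domain on which all coefficients are holomorphic, the solution determined by prescribed initial data at a base point extends to a \emph{single-valued} holomorphic function on the whole domain. I would fix the real base point $x_0 = 0$ together with the initial data $\phi(0), \phi'(0), \ldots, \phi^{(p-1)}(0)$, and thereby obtain a holomorphic solution $\Phi$ on $\mathcal{D}_d$. Restricting $\Phi$ to $\mathbf{R} \subset \mathcal{D}_d$ yields a $C^\infty$ solution of the real equation carrying the same initial data as $\phi$, so by uniqueness of solutions of the real initial value problem we get $\Phi|_{\mathbf{R}} = \phi$; thus $\Phi$ is an analytic extension of $\phi$. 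Uniqueness of the extension is then immediate from the identity theorem, since $\mathcal{D}_d$ is connected and $\mathbf{R}$ has accumulation points in $\mathcal{D}_d$.

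For the periodicity I would exploit that the strip $\mathcal{D}_d$ is invariant under the real translation $z \mapsto z + L$ and that the coefficients $f_j$ are $L$-periodic. Setting $\Psi(z) = \Phi(z + L)$, the function $\Psi$ is holomorphic on $\mathcal{D}_d$ and solves the \emph{same} equation, because the coefficients are unchanged under this translation. Since $\phi$ is $L$-periodic on $\mathbf{R}$, the initial data of $\Psi$ at $0$ coincide with those of $\Phi$ (namely $\Psi^{(k)}(0) = \phi^{(k)}(L) = \phi^{(k)}(0)$ for $k = 0, \ldots, p-1$), so the uniqueness part of the Cauchy theorem forces $\Psi = \Phi$ on $\mathcal{D}_d$, that is $\Phi(z + L) = \Phi(z)$.

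The statement contains no deep difficulty; the single genuinely essential point — and the one I would be most careful to justify — is the \emph{single-valuedness} of the analytic continuation. For a general domain the continuation of an ODE solution along different paths can produce distinct branches (nontrivial monodromy), and it is precisely the simple connectivity of the strip $\mathcal{D}_d$ that rules this out and renders $\Phi$ well defined on all of $\mathcal{D}_d$. The remaining ingredients — reduction to a classical solution via Lemma~\ref{lem:EigenSmooth_C_infty}, matching with $\phi$ on $\mathbf{R}$, and the translation argument for periodicity — are routine.
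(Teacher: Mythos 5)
Your proof is correct, and it shares the paper's overall strategy---use Lemma~\ref{lem:EigenSmooth_C_infty} to see that $\phi$ is a classical solution of the linear ODE, then invoke classical complex ODE theory on the strip---but the technical route is genuinely different. The paper constructs the extension by hand: it fixes $\varepsilon>0$, works on the substrip $\mathcal{D}_{d-\varepsilon}$ (where the periodic coefficients are bounded), rewrites the initial value problem as the integral equation $w(z) = w_{0} + \int_{x_{0}}^{z} F(s)w(s)\,\mathrm{d}s$, cites the successive-approximation theorem of \cite{bib:Hille_ODE_complex} for existence of an analytic solution, and then establishes both the consistency of the solutions arising from different real base points $x_{0}$ and the $L$-periodicity through Gronwall-type uniqueness estimates for that integral equation, finally letting $\varepsilon \to 0$. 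You instead invoke the global form of the Cauchy existence theorem: solutions of a linear ODE with holomorphic coefficients continue analytically along every path in the domain, and single-valuedness follows from simple connectivity of $\mathcal{D}_{d}$ via the monodromy theorem. This lets you use a single base point, identify $\Phi|_{\mathbf{R}} = \phi$ by real uniqueness, and dispatch both uniqueness of the extension and periodicity with the identity theorem, so you avoid the $\varepsilon$-exhaustion and the base-point patching entirely; the cost is that you lean on a stronger packaged theorem, whereas the paper only needs the local successive-approximation result it cites. One small point you pass over quickly: when you assert that $\Psi(z)=\Phi(z+L)$ solves the \emph{same} equation, you need $f_{j}(z+L)=f_{j}(z)$ on all of $\mathcal{D}_{d}$, while Assumption~\ref{assump:analytic} together with \eqref{eq:Expr_S_p_mu} only gives periodicity on $\mathbf{R}$; this extends to the strip by the identity theorem (both sides are analytic and agree on $\mathbf{R}$), a one-line fix that the paper's proof also elides when it writes $F(t+L)=F(t)$ for complex $t$.
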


\renewcommand{\proofname}{Proof of Lemma~\ref{lem:ProjEstim_C_infty}}
\begin{proof}
Since $\phi$ is $C^{\infty}$ by Lemma~\ref{lem:EigenSmooth_C_infty}, 
in a similar manner to \eqref{eq:C_infty_Fourier_decay}, 
we can use the fundamental fact for the Fourier coefficients that 
there exists a positive constant $C_{\phi, q}$ depending only on $\phi$ and $q$ such that
\[
|\hat{\phi}_{n}| \leq C_{\phi, q}\, (1 + |n|)^{-q-1}
\]
for any integer $n$. Then we have
\begin{align*}
\| (I - \hat{P}_N) \phi \|_{2}^{2} 
& = \sum_{|n| > N} |\hat{\phi}_n|^2 
 \leq \sum_{|n| > N} C_{\phi, q}^2\, (1 + |n|)^{-2q-2} \\
& \leq \frac{C_{\phi, q}^2}{(1+N)^{2q}} \sum_{|n| > N} \frac{1}{(1 + |n|)^{2}}
 \leq \frac{C_{\phi, q}^2}{(1+N)^{2q}} \sum_{n=-\infty}^{\infty} \frac{1}{(1 + |n|)^{2}}.
\end{align*}
Thus we obtain the conclusion.
\end{proof}
\renewcommand{\proofname}{Proof}

\renewcommand{\proofname}{Proof of Lemma~\ref{lem:ProjEstim_analytic}}
\begin{proof}
Since $\phi$ can be regarded as an analytic function on ${\mathcal{D}_{d}}$ with $\phi(z + L) = \phi(z)$
by Lemma~\ref{lem:EigenSmooth_analytic}, there exists a positive constant $C_{\phi, d, \varepsilon}$ 
depending only on $\phi$, $d$ and $\varepsilon$ such that
\begin{align}
 |\hat{\phi}_n| \leq C_{\phi, d, \varepsilon} \exp \left( - \frac{ 2 \pi d _{\varepsilon}}{L} | n | \right)
\label{eq:PW_estim}
\end{align}
for any integer $n$, where $d _{\varepsilon} = d - \varepsilon$. 
In fact, we can show this estimate by the Paley-Wiener type argument.
Noting the periodicity $\phi(z + L) = \phi(z)$, by Cauchy's integral theorem we have 
\begin{align*}
\hat{\phi}_n 
& = \frac{1}{\sqrt{L}} \int_{0}^{L} \phi(x) \exp \left( -\mathrm{i} \frac{2\pi n}{L} x \right)\, \mathrm{d}x \\
& = \frac{1}{\sqrt{L}} 
\int_{0}^{L} \phi \left( x  - \mathrm{sign}(n)\, d_{\varepsilon} \mathrm{i} \right) 
\exp \left( -\mathrm{i} \frac{2\pi n}{L} \left( x  - \mathrm{sign}(n)\, d_{\varepsilon} \mathrm{i} \right) \right)\, \mathrm{d}x \\
& = \exp\left( - \frac{ 2 \pi d _{\varepsilon}}{L} | n | \right) \frac{1}{\sqrt{L}} 
\int_{0}^{L} \phi \left( x  - \mathrm{sign}(n)\, d_{\varepsilon} \mathrm{i} \right) 
\exp \left( -\mathrm{i} \frac{2\pi n}{L} x \right)\, \mathrm{d}x,
\end{align*}
where $\mathrm{sign}(n)$ is the sign of $n$. 
This expression implies \eqref{eq:PW_estim}.
Therefore we have
\begin{align*}
\| (I - \hat{P}_N) \phi \|_{2}^{2} 
 = \sum_{|n| > N} |\hat{\phi}_n|^2 
 \leq \sum_{|n| > N} C_{\phi, d, \varepsilon}^{2} \exp ( - 2\, d_{\varepsilon, L}\, | n | ) 
 = \frac{2\, C_{\phi, d, \varepsilon}^{2} 
\exp ( - 2\, d_{\varepsilon, L}\, (N+1) ) }{ 1 - \exp ( - 2\, d_{\varepsilon, L} )}, 
\end{align*}
where $d_{\varepsilon, L} = 2 \pi d _{\varepsilon} / L$.
Thus we obtain the conclusion.
\end{proof}
\renewcommand{\proofname}{Proof}

\section{Proofs of Lemmas~\ref{lem:EigenSmooth_C_infty} and~\ref{lem:EigenSmooth_analytic}}
\label{sec:ProofEigenSmooth}

\renewcommand{\proofname}{Proof of Lemma~\ref{lem:EigenSmooth_C_infty}}
\begin{proof}
Since $\phi$ is an eigenvector of $S_{p}^{\mu}$ corresponding to $\lambda$, we have
\begin{align*}
\phi^{(p)}(x) = -\sum_{j=0}^{p-1} f_j(x) \phi^{(j)}(x) + \lambda \phi(x). 
\end{align*}
Then it follows from $\phi \in H^{p}([0,L])_{\mathrm{per}}$ that $\phi^{(p)} \in H^{1}([0,L])_{\mathrm{per}}$. 
Hence $\phi \in H^{p+1}([0,L])_{\mathrm{per}}$.
Iterating a similar argument, 
we have $\phi \in H^{q}([0,L])_{\mathrm{per}}$ for any positive integer $q$. 
Then the conclusion follows from the fundamental embedding relation: 
$H^{q}([0,L])_{\mathrm{per}} \hookrightarrow C^{q-1}([0,L])_{\mathrm{per}}$ for any positive integer $q$,
where $C^{q-1}([0,L])_{\mathrm{per}}$ is the set of periodic functions on $[0,L]$ 
which are $q-1$ times continuously differentiable
(see e.g.~\cite[Proposition 7.5.4]{bib:AtkinsonHan_NABook}). 
\end{proof}
\renewcommand{\proofname}{Proof}

\renewcommand{\proofname}{Proof of Lemma~\ref{lem:EigenSmooth_analytic}}
\begin{proof}
We choose an arbitrary $\varepsilon$ with $0 < \varepsilon  < d$ and 
set $d_{\varepsilon} = d - \varepsilon$. 
Then $f_{0}, f_{1},\ldots, f_{p-1}$ are analytic on $\mathcal{D}_{d_{\varepsilon}}$ 
and continuous on $\overline{\mathcal{D}_{d_{\varepsilon}}}$. 
Hence there exists a positive constant $M_{\varepsilon}$ such that 
$| f_{j}(z) | \leq M_{\varepsilon}$ for all $j = 0, 1, \ldots, p-1$ and 
$z \in \mathcal{D}_{d_{\varepsilon}}$. 

By Lemma~\ref{lem:EigenSmooth_C_infty}, we can assume the existence of 
an eigenvector $\phi \in C^{\infty}([0,L])_{\mathrm{per}}$ corresponding to $\lambda \in \sigma(S_{p}^{\mu})$. 
Then for the fixed $\lambda$ and each $x_{0} \in \mathbf{R}$, 
we consider the equation 
$S_{p}^{\mu} \psi = \lambda \psi$ as an initial value problem 
with the condition $\psi^{(n)}(x_{0}) = \phi^{(n)}(x_{0})\ (n=0,\ldots, p-1)$. 
By the matrix $F(z)$ defined as
\begin{align*}
F(z) = 
\begin{pmatrix}
 -f_{p-1}(z) & -f_{p-2}(z) & \cdots & -f_1(z) & -(f_0(z) - \lambda) \\
 1 & 0 & \cdots &   & 0 \\
 0 & 1 & \cdots &   & \vdots  \\
 \vdots & \ddots  & \ddots  &   & \vdots  \\
 0  & \cdots & \cdots & 1 & 0   
\end{pmatrix}
\end{align*}
and the vector $w_{0} = (\phi^{(p-1)}(x_{0}), \phi^{(p-2)}(x_{0}), \ldots, \phi(x_{0}))^{T}$, 
the initial value problem above is expressed as 
\begin{align}
w'(z) = F(z)w(z), \quad
w(x_{0}) = w_{0}, \label{eq:trans_1nd_order_ODE}
\end{align}
where $w(z)$ corresponds to $(\psi^{(p-1)}(z), \psi^{(p-2)}(z), \ldots, \psi(z) )^{T}$. 
The equation \eqref{eq:trans_1nd_order_ODE} is equivalently expressed as
\begin{align}
w(z) = w_{0} + \int_{x_{0}}^{z} F(s)w(s) \mathrm{d}s. 
\label{eq:trans_1nd_order_ODE_equiv}
\end{align}

By the fundamental method of successive 
approximations~\cite[Theorem 2.3.1]{bib:Hille_ODE_complex}%
\footnote{
In \cite[Theorem 2.3.1]{bib:Hille_ODE_complex}, 
the general form $w' = F(z, w)$ is considered. 
Here we consider the special case that $F$ is linear with respect to $w$. 
}, 
it is shown that the equation~\eqref{eq:trans_1nd_order_ODE} 
has a unique analytic solution $w$ on $\mathcal{D}_{d_{\varepsilon}}$
for each $x_{0}\in \mathbf{R}$. 
Then what remains is to prove 
the consistency of the solutions for different $x_{0}$'s 
and the periodicity of the solution.
First, 
note that for any $x_{0}$ the solution $w$ is identical to 
$v = (\phi^{(p-1)}, \phi^{(p-2)}, \ldots, \phi)^{T}$ on $\mathbf{R}$.  
This follows from the condition
\begin{align}
\| w(x) - v(x) \| 
= \left\| \int_{x_{0}}^{x} F(s)(w(s)-v(s)) \mathrm{d}s \right\| 
\leq \sup_{z \in \mathcal{D}_{d_{\varepsilon}}} \| F(z) \| 
\int_{x_{0}}^{x} \| w(s)-v(s) \|\, |\mathrm{d}s|, 
\label{eq:unique_cond}
\end{align}
where $\| \cdot \|$ is a norm of $\mathbf{R}^{p}$. 
Then if $w_{1}$ and $w_{2}$ 
are the solutions for $x_{0}^{(1)}$ and $x_{0}^{(2)}$, respectively, 
we have
\begin{align*}
w_{1}(z) - w_{2}(z)
& = 
w_{1}(x_{0}^{(1)}) - w_{2}(x_{0}^{(2)})
+ \int_{x_{0}^{(1)}}^{z} F(s)w_{1}(s) \mathrm{d}s 
- \int_{x_{0}^{(2)}}^{z} F(s)w_{2}(s)\mathrm{d}s \\
& = 
w_{1}(x_{0}^{(1)}) - w_{2}(x_{0}^{(2)})
- \int_{x_{0}^{(2)}}^{x_{0}^{(1)}} F(s)w_{2}(s) \mathrm{d}s 
+ \int_{x_{0}^{(1)}}^{z} F(s) (w_{1}(s) - w_{2}(s)) \mathrm{d}s \\
& = 
v(x_{0}^{(1)}) - v(x_{0}^{(2)})
- \int_{x_{0}^{(2)}}^{x_{0}^{(1)}} F(s) v(s) \mathrm{d}s 
+ \int_{x_{0}^{(1)}}^{z} F(s) (w_{1}(s) - w_{2}(s)) \mathrm{d}s \\
& = 
\int_{x_{0}^{(1)}}^{z} F(s) (w_{1}(s) - w_{2}(s)) \mathrm{d}s 
\end{align*}
for any $z \in \mathcal{D}_{d_{\varepsilon}}$, 
which implies $w_{1} \equiv w_{2}$ on $\mathcal{D}_{d_{\varepsilon}}$ 
by the same criterion as \eqref{eq:unique_cond}. 
Next, the expressions
\begin{align*}
w(z + L) 
& = v(x_{0} + L) + \int_{x_{0}+L}^{z+L} F(s)w(s) \mathrm{d}s
= v(x_{0}) + \int_{x_{0}}^{z} F(t+L)w(t+L) \mathrm{d}t \\
& = v(x_{0}) + \int_{x_{0}}^{z} F(t)w(t+L) \mathrm{d}t, \\
w(z) 
& = v(x_{0}) + \int_{x_{0}}^{z} F(t)w(t) \mathrm{d}t. 
\end{align*}
guarantees the periodicity of $w$ also by the same criterion as \eqref{eq:unique_cond}. 

Finally, since the all arguments above hold true for any $\varepsilon$ with $0< \varepsilon < d$, 
we obtain the conclusion. 
\end{proof}
\renewcommand{\proofname}{Proof}


\begin{thebibliography}{00}
\bibitem{bib:AtkinsonEigen1967}
Atkinson. K., The numerical solution of the eigenvalue problem for compact integral operators, 
Trans. Amer. Math. Soc. {\bf 129}, 458--465 (1967)
\bibitem{bib:AtkinsonEigen1975}
Atkinson, K., Convergence rates for approximate eigenvalues of compact integral operators, 
SIAM J. Num. Anal. {\bf 12}, 213--222 (1975)
\bibitem{bib:AtkinsonHan_NABook}
Atkinson, K., Han, W., Theoretical numerical analysis: A functional analysis framework, 
second edition, Springer, New York (2005) 
\bibitem{bib:CurtisDeconinck2010} 
Curtis, C. W., Deconinck, B., 
On the convergence of Hill's method,
Math. Comp. {\bf 79}, 169--187 (2010)
\bibitem{bib:DunfordSchwartzII}
Dunford, N, Schwartz, J. T.,  
Linear operators Part II Spectral theory, Self adjoint operators in Hilbert space,  
Wiley \& Sons, New Jersey (1963) 
\bibitem{bib:DunfordSchwartzIII}
Dunford, N, Schwartz, J. T.,  
Linear operators Part III Spectral operators, 
Wiley \& Sons, New Jersey (1971) 
\bibitem{bib:DeconinckKutz2006}
Deconinck, B., Kutz, J. N.,  
Computing spectra of linear operators using Hill's method, 
J. Computational Physics {\bf 219}, 296--321 (2006)
\bibitem{bib:Hill_HillsMethod}
Hill, G. W., 
On the part of the motion of the lunar perigee which is a function of the mean motions of the sun and moon, 
Acta Math. {\bf 8}, 1--36 (1886)
\bibitem{bib:Hille_ODE_complex}
Hille, E.,  
Ordinary differential equations in the complex domain, Dover, New York (1997)
\bibitem{bib:JohnsonZumbrun2012}
Johnson, M. A., Zumbrun, K., 
Convergence of Hill's method for nonselfadjoint operators, 
SIAM J. Numer. Anal. {\bf 50}, 64--78 (2012)
\bibitem{bib:Krasnoselskii_etal_1972}
Krasnosel'skii, M. A., Vainikko, G. M., Zabreiko, P. P., Rutitskii, Ya. B., Stetsenko, V. Ya.,  
Approximate solution of operator equations, 
Wolters-Noordhoff, Groningen (1972) 
\bibitem{bib:ReedSimonMMMP-I}
Reed, M., Simon, B., 
Methods of modern mathematical physics. I. Functional analysis, 
revised and enlarged edidition, 
Academic Press, New York (1980)
\bibitem{bib:ReedSimonMMMP-IV}
Reed, M., Simon, B.,  
Methods of modern mathematical physics. IV. Analysis of operators, 
Academic Press, New York (1978) 
\bibitem{bib:Vainikko1967}
Vainikko, G. M., 
A perturbed Galerkin method and the general theory of approximate methods for nonlinear equations, 
Zh. Vychisl. Mat. Mat. Fiz. {\bf 7}, 723--751 (1967) (in Russian), 
U. S. S. R. Comput. Math. and Math. Phys. {\bf 7}, 18--32 (1967) (in English).
\bibitem{bib:VargaGersh}
Varga, R. S., 
Gershgorin and his circles, Springer, Berlin Heidelberg (2004)
\end{thebibliography}
\end{document}